\documentclass[a4paper]{amsart}
\usepackage[utf8]{inputenc}
\usepackage[T1]{fontenc}
\usepackage{lmodern}
\usepackage{amssymb}
\usepackage[all]{xy}
\usepackage{nicefrac,mathtools,enumitem}
\usepackage{microtype}
\usepackage{enumerate}
\usepackage[pdftitle={Twisted actions and regular Fell bundles over inverse semigroups},
  pdfauthor={Alcides Buss and Ruy Exel},
  pdfsubject={Mathematics; ...}]{hyperref}
\usepackage[lite]{amsrefs}
\newcommand*{\MRref}[2]{ \href{http://www.ams.org/mathscinet-getitem?mr=#1}{MR #1}}
\newcommand*{\arxiv}[1]{ \href{http://www.arxiv.org/abs/#1}{arXiv:#1}}

\numberwithin{equation}{section}
\theoremstyle{plain}
\newtheorem{theorem}[equation]{Theorem}
\newtheorem{lemma}[equation]{Lemma}
\newtheorem{proposition}[equation]{Proposition}
\newtheorem{corollary}[equation]{Corollary}
\theoremstyle{definition}
\newtheorem{definition}[equation]{Definition}

\theoremstyle{remark}
\newtheorem{remark}[equation]{Remark}
\newtheorem{example}[equation]{Example}

\DeclareMathOperator*{\dom}{dom}
\DeclareMathOperator*{\ran}{ran}
\DeclareMathOperator*{\cspn}{\overline{span}}
\DeclareMathOperator*{\Rep}{Rep}
\DeclareMathOperator*{\supp}{supp}

\newcommand*{\nb}{\nobreakdash}
\newcommand*{\Star}{\texorpdfstring{$^*$\nobreakdash-\hspace{0pt}}{*-}}
\newcommand*{\Ad}{\mathrm{Ad}}

\newcommand*{\N}{\mathbb N} 
\newcommand*{\C}{\mathbb C} 
\newcommand*{\Torus}{\mathbb T} 

\newcommand*{\sbe}{\subseteq}
\newcommand*{\bound}{\mathcal{L}} 
\newcommand*{\ssim}{{\sim^{\!\!\!^{s}}}}

\newcommand*{\red}{\textup{r}} 
\newcommand*{\uni}{\mathcal{U}} 

\newcommand*{\cstar}{\texorpdfstring{$C^*$\nobreakdash-\hspace{0pt}}{C*-}}
\newcommand*{\cont}{\mathcal C} 
\newcommand*{\contz}{\cont_0} 
\newcommand*{\contb}{\cont_\textup{b}} 
\newcommand*{\id}{\textup{id}} 

\newcommand*{\hils}{\mathcal H} 
\newcommand*{\A}{\mathcal{A}} 
\newcommand*{\B}{\mathcal{B}} 
\newcommand*{\D}{\mathcal{D}} 
\newcommand*{\F}{\mathcal{F}} 
\newcommand*{\troa}{M} 
\newcommand*{\trob}{N} 
\newcommand*{\mult}{\mathcal{M}} 
\newcommand*{\E}{\mathcal{E}} 
\newcommand*{\U}{\mathcal{U}} 

\newcommand*{\defeq}{\mathrel{\vcentcolon=}}

\newcommand*{\into}{\hookrightarrow}
\newcommand*{\onto}{\twoheadrightarrow}
\newcommand*{\congto}{\xrightarrow\sim}
\newcommand*{\open}{\mathcal{O}}
\newcommand*{\K}{\mathcal{K}} 
\newcommand*{\Ls}{\mathcal{L}} 
\newcommand*{\SSS}{\mathcal I} 
\newcommand*{\tro}{ternary ring of operators} 
\newcommand*{\tros}{ternary rings of operators} 
\newcommand*{\inv}{^{-1}} 
\newcommand*{\rest}[1]{|_{#1}} 
\newcommand*{\Res}{\mathcal{R}} 

\newcommand*{\braket}[2]{\langle#1\!\mid\!#2\rangle} 
\newcommand*{\gen}[1]{\langle #1 \rangle} 


\newcommand*{\G}{\mathcal{G}} 
\newcommand*{\s}{\mathrm{d}} 
\renewcommand*{\r}{\mathrm{r}} 
\newcommand*{\domain}{\mathrm{d}} 
\newcommand*{\range}{\mathrm{r}} 
\newcommand*{\g}{\gamma}
\newcommand*{\Gz}{\G^{(0)}}
\newcommand*{\Gt}{\G^{(2)}}
\newcommand*{\qtrip}[3]{[#1,#2,#3]}
\newcommand*{\germ}[2]{[#1, #2]}

\hyphenation{
rep-re-sen-ta-tion
im-prim-i-tiv-i-ty
ho-mo-mor-phism
ho-mo-mor-phisms
sub-al-ge-bra
bi-mod-ule}

\begin{document}
\title[Twisted actions and regular Fell bundles over inverse semigroups]{Twisted actions and regular Fell bundles \\ over inverse semigroups}

\author{Alcides Buss}
\email{alcides@mtm.ufsc.br}
\author{Ruy Exel}
\email{exel@mtm.ufsc.br}

\address{Departamento de Matemática\\
  Universidade Federal de Santa Catarina\\
  88.040-900 Florianópolis-SC\\
  Brasil}

\begin{abstract}
  We introduce a new notion of twisted actions of inverse semigroups and show
that they correspond bijectively to certain \emph{regular} Fell bundles over
inverse semigroups, yielding in this way a structure classification of such
bundles.  These include as special cases all the \emph{stable} Fell bundles.

Our definition of twisted actions properly generalizes a previous one introduced
by Sieben and corresponds to Busby-Smith twisted actions in the group case. As
an application we describe twisted \'etale groupoid \cstar{}algebras in terms of
crossed products by twisted actions of inverse semigroups and show that Sieben's
twisted actions essentially correspond to twisted \'etale groupoids with
topologically trivial twists.
\end{abstract}

\subjclass[2000]{46L55, 20M18.}

\keywords{Fell bundle, inverse semigroup, ternary ring, twisted action, twisted groupoid, crossed product.}

\thanks{This research was partially supported by CNPq.}

\maketitle

\tableofcontents

\section{Introduction}
\label{sec:introduction}

In \cite{Exel:twisted.partial.actions} the second named author introduced a method for constructing a Fell bundle
(also called a \cstar{}algebraic bundle \cite{fell_doran}) over a group $G$, starting
from a \emph{twisted partial action} of $G$ on a \cstar{}algebra. The relevance of
this construction is due to the fact that a very large number of Fell bundles, including all stable,
second countable ones, arise from a twisted partial action of the base group on the unit fiber algebra \cite[Theorem~7.3]{Exel:twisted.partial.actions}.
  It is the purpose of the present article to extend the above ideas in order to
embrace Fell bundles over \emph{inverse semigroups}.

The notion of Fell bundles over inverse semigroups was introduced by Sieben \cite{SiebenFellBundles}
and further developed in \cite{Exel:noncomm.cartan} and \cite{BussExel:Fell.Bundle.and.Twisted.Groupoids}.
Among its important occurrences one has that every twisted étale groupoid or, more generally, every Fell bundle over an étale groupoid
(in the sense of Kumjian \cite{Kumjian:fell.bundles.over.groupoids}) gives rise to a Fell bundle
over the inverse semigroup of its open bisections (see \cite[Example~2.11]{BussExel:Fell.Bundle.and.Twisted.Groupoids}).

Given a Fell bundle $\A =\{\A_s\}_{s\in S}$ over an inverse
semigroup $S$, one says that $\A$ is \emph{saturated} if $\A_{st}$
coincides with the closed linear span of $\A_s\A_t$ for all $s$
and $t$ in $S$. The requirement that a Fell bundle over an inverse
semigroup be saturated is not as severe as in the case of groups because in
the former situation one may apply the process of \emph{refinement} (see
Section~\ref{sec:Refinements}), transforming any Fell bundle into a saturated one (albeit
over a different inverse semigroup). We will therefore mostly restrict our
attention to saturated Fell bundles.

Both in the case of groups and of inverse semigroups each fiber of a Fell
bundle possesses the structure of a \emph{\tro} (see \cite{Zettl:TROs}, \cite[Section~4]{Exel:twisted.partial.actions}),
namely a mathematical structure isomorphic to a closed linear space of
operators on a Hilbert space which is invariant under the ternary operation
\begin{equation*}
 (x,y,z) \mapsto xy^*z.
\end{equation*}
Under special circumstances (see Definition~\ref{def:RegularTRO} below or \cite[Section~5]{Exel:twisted.partial.actions}) a {\tro}
$M$ admits a partial isometry $u$ (acting on the Hilbert space where $M$ is
represented) such that $M^*u=M^*M$, and $uM^*=MM^*$, in which case we say that
$u$ is \emph{associated to} $M$ and that $M$ is regular.

Likewise, given a Fell bundle $\A =\{\A_s\}_{s\in S}$ over an
inverse semigroup $S$, we say that $\A$ is \emph{regular} if every
$\A_s$ is regular as a {\tro}. Assuming this is the case, one may
consequently choose a family of partial isometries $\{u_s\}_{s\in S}$,
where each $u_s$ is associated to $\A_s$. These give rise to two
important families of objects, namely the automorphisms
\begin{equation}\label{FormulaForBeta}
\beta_s\colon a\in \A_{s^*s} \mapsto u_s a u_s^*\in \A_{ss^*},
\end{equation}
and the \emph{cocycle} $\omega=\{\omega(s,t)\}_{s, t\in S}$, given by
\begin{equation}\label{FormulaForOmega}
\omega(s,t) = u_su_tu_{st}^*.
\end{equation}

Although the partial isometries $u_s$ live outside our Fell bundle, the
presence of the $\beta_s$ and of the $\omega(s,t)$ is felt at the level of the
fibers over idempotent elements of $S$: this is obviously so with respect to
$\beta_s$, as its domain and range are fibers over idempotents, and one may show
that $\omega(s,t)$ is nothing but a unitary multiplier of the fiber over the
idempotent element $st(st)^*$.

The main point of departure for our research was the challenge of identifying a
suitable set of properties satisfied by the $\beta_s$ and the $\omega(s,t)$
which, when taken as axioms referring to abstractly given collections
$\{\beta_s\}_{s\in S}$ and $\{\omega(s,t)\}_{s,t\in S}$, could be used to
construct a Fell bundle over $S$.  The properties we decided to pick are to be
found in Definition~\ref{def:twisted action} below, describing our notion of a \emph{twisted action of an inverse semigroup}.

Returning to the $\beta_s$ and the $\omega(s,t)$ of \eqref{FormulaForBeta} and
\eqref{FormulaForOmega}, there is in fact a myriad of algebraic relations which
can be proven for these, a sample of which is listed under Proposition~\ref{prop:properties of twisted action coming from partial isometries}.
Having as our main goal to generalize the group case of \cite{Exel:twisted.partial.actions},
we were happy to welcome into our definition two of the main axioms adopted in the group case \cite[Definition 2.1]{Exel:twisted.partial.actions},
namely properties~\eqref{prop:properties of twisted action coming from partial isometries:item:beta_r beta_s=Ad_omega(r,s)beta_rs} and
\eqref{prop:properties of twisted action coming from partial isometries:item:CocycleCondition}
of Proposition~\ref{prop:properties of twisted action coming from partial isometries}.

However, choosing the appropriate replacement for \cite[Definition 2.1.d]{Exel:twisted.partial.actions}, namely
\begin{equation}\label{SiebenCondition}
  \omega(t,e)=\omega(e,t)=1,
\end{equation}
where $e$ denotes the unit of the group, turned out to be a major difficulty.
Nonetheless this dilemma proved to be the gateway to interesting and profound
phenomena relating, among other things, to topological aspects of twisted
groupoids, as we hope to be able to convey below.

Sieben \cite{SiebenTwistedActions} has considered a similar notion of twisted action, where he postulates
\eqref{SiebenCondition} for every $t$ and $e$ in $S$, such that $e$ is
idempotent. Although this may be justified by the fact that the idempotents of
$S$ play a role similar to the unit in a group, it cannot be proved in the model
situation where the $\omega(s,t)$ are given by \eqref{FormulaForOmega},  so we
decided not to adopt it.

After groping our way in the dark for quite some time (at one point we had
an untold number of strange looking axioms) we settled for~\eqref{def:twisted action:item:omega(r,r*r)=omega(e,f)=1_ef_and_omega(rr*,r)=1} and~\eqref{def:twisted action:item:omega(s*,e)omega(s*e,s)x=omega(s*,s)x} in Definition~\ref{def:twisted action}
which the reader may still find a bit awkward but which precisely fulfills our expectations.

Needless to say, regular Fell bundles over an inverse semigroup $S$ are then
shown to be in a one-to-one correspondence with twisted actions of $S$, as
proven in Corollary~\ref{cor:CorrespondenceRegularFellBundlesAndTwistedActions},
hence providing the desired generalization of the main result of \cite{Exel:twisted.partial.actions}.

Sieben's condition~\eqref{SiebenCondition}, although not satisfactory for our
goals, is quite relevant in a number of ways.  Among other things it implies our
axioms and hence Sieben's definition \cite{SiebenTwistedActions} of twisted actions
is a special case of ours (see Proposition~\ref{prop:SiebensTwistedActions}).

As our main application we consider an important class of Fell bundles over
inverse semigroups, obtained from twisted groupoids \cite[Section~2]{Kumjian:cstar.diagonals}, \cite[Section~4]{RenaultCartan}.
Given an étale groupoid $\G$ equipped with a twist $\Sigma$,
one may consider the associated complex line bundle $L =(\Sigma\times\C)/\Torus$ \cite[Example 5.5]{Deaconi_Kumjian_Ramazan:Fell.Bundles},
which is a Fell bundle over $\G$ in the sense of \cite{Kumjian:fell.bundles.over.groupoids}.
Therefore, applying the procedure outlined in \cite[Example 2.11]{BussExel:Fell.Bundle.and.Twisted.Groupoids}
to $(\G, L)$, we may form a Fell bundle $\{\A_s\}_s$, over the inverse
semigroup of all open bisections $s\subseteq \G$.

En passant, the class of such Fell bundles consists precisely of the \emph{semi-abelian} ones, namely those having commutative fibers over idempotent
semigroup elements \cite[Theorem~3.36]{BussExel:Fell.Bundle.and.Twisted.Groupoids}.

Recall from \cite[Example 2.11]{BussExel:Fell.Bundle.and.Twisted.Groupoids} that for each bisection $s\subseteq \G$,
$\A_s$ is defined as the space of all continuous sections of $L$ over $s$ vanishing at infinity.  Regularity
being the key hypothesis of our main Theorem, one should ask whether or not $\A_s$ is
regular.  The answer is affirmative when $L$ is topologically trivial
over $s$, because one may then choose a nonvanishing continuous section over $s$
which turns out to be associated to $\A_s$.

Restricting our Fell bundle to a subsemigroup consisting of \emph{small}
bisections, i.e.~bisections where $L$ is topologically trivial (recall that $L$ is necessarily locally trivial), we get a
regular one to which we may apply our main result, describing the bundle in question
in terms of a twisted inverse semigroup action.

As an immediate consequence (Theorem~\ref{theo:CorrespondenceTwistedGroupoidsAndTwistedActions}) we prove that the twisted groupoid
\cstar{}algebra is isomorphic to the crossed product of $\contz\big(\G^{(0)}\big)$ by a
twisted inverse semigroup action.  This simultaneously generalizes
\cite[Theorem~3.3.1]{Paterson:Groupoids}, \cite[Theorem~8.1]{Quigg.Sieben.C.star.actions.r.discrete.groupoids.and.inverse.semigroups} and \cite[Theorem~9.9]{Exel:inverse.semigroups.comb.C-algebras}.

Still speaking of Fell bundles arising from a twisted groupoid, we found a very
nice characterization of Sieben's condition (Proposition~\ref{prop:SiebenTwistedActions=TopologicalTrivial}): it holds if and only if the line
bundle $L$ is topologically trivial (although it needs not be trivial as a Fell bundle).

Our method sheds new light over the historic evolution of the notion of \emph{twisted groupoids}.
Indeed, recall that Renault \cite{RenaultThesis} first viewed twisted groupoids as those equipped with continuous two-cocycles,
although more recently the most widely accepted notion of a twist over a groupoid is that of a groupoid extension
\begin{equation*}
  \Torus\times\G^{(0)} \to \Sigma \to \G
\end{equation*}
\cite[Section~2]{Kumjian:cstar.diagonals}, \cite[Section~2]{Muhly.Williams.Continuous.Trace.Groupoid}, \cite[Section~4]{RenaultCartan}.
While a two-cocycle is known to give rise to a groupoid extension,
the converse is not true, the obstruction being that $\Sigma$ may be
topologically nontrivial as a circle bundle over $\G$ \cite[Example~2.1]{Muhly.Williams.Continuous.Trace.Groupoid}.

Our point of view, based on inverse semigroups, may be seen as unifying the \emph{cocycle} and the
\emph{extension} points of view, because the Fell bundle over
the inverse semigroup of \emph{small} bisections may always be described by a
\emph{cocycle}, namely the $\omega(s,t)$ of our twisted action, even when the
twist itself is topologically nontrivial and hence does
not come from a 2-cocycle in Renault's sense.  Nevertheless, when a twisted
étale groupoid does come from a 2-cocycle $\tau$ in Renault's sense, then
there is a close relationship between $\tau$ and our cocycle $\omega$ (Proposition~\ref{prop:RelationRenaultAndSiebensCocycles})

Most of our results are based on the regularity property of Fell bundles but in
some cases we can do without it.  The key idea behind this generalization is
based on the notion of \emph{refinement} (see Definition~\ref{def:refinement}).  In topology it
is often the case that assertions fail to hold globally, while holding locally.
In many of these circumstances one may successfully proceed after restricting
oneself to a covering of the space formed by small open sets where the assertion
in question holds.  The idea of Fell bundle refinement is the inverse semigroup equivalent
of this procedure.

Although the process of refining a Fell bundle changes everything, including the
base inverse semigroup, the cross-sectional algebras are unchanged (Theorem~\ref{theo:RefinementPreserveFullC*Algebras}),
as one would expect.  Moreover, if a semi-abelian Fell bundle $\A$ admits
$\B$ as a refinement, then the underlying twisted groupoids $(\G_\A,\Sigma_\A)$ and
$(\G_\B, \Sigma_\B)$ obtained by \cite[Section~3.2]{BussExel:Fell.Bundle.and.Twisted.Groupoids}
are isomorphic (Proposition~\ref{prop:RefinementPreserveTwistedGroupoids}).

A {\tro} is said to be \emph{locally regular} if it generated by its regular
ideals (Definition (2.8)).  Likewise a Fell bundle over an inverse semigroup is
said to be \emph{locally regular} if its fibers are locally regular as {\tros}.

Our main reason for considering locally regular Fell bundles is that all semi-abelian
ones possess this property (Proposition~\ref{prop:commutative=>loc.regular}). Furthermore, our motivation for
studying the notion of refinement is that every locally regular bundle admits a
saturated regular refinement (Proposition~\ref{prop:RefinementForLocRegularFellBundle}), even if the original bundle is
not saturated.  Therefore the scope of our theory is extended to include all locally
regular bundles.

\section{Regular ternary rings and imprimitivity bimodules}
\label{sec:regular TROs}

In this section we shall study a special class of ternary rings and imprimitivity bimodules named \emph{regular}.
This notion, which is the basis for our future considerations, has been first introduced in \cite{Exel:twisted.partial.actions}
building on ideas from \cite{BGR:MoritaEquivalence}.

Let $\hils$ be a Hilbert space, and let $\bound(\hils)$ denote the space of all bounded linear operators on $\hils$.
Recall that a ternary ring of operators is a closed subspace $\troa\sbe\bound(\hils)$ such that $\troa\troa^*\troa\sbe \troa$. It is a consequence that $\troa\troa^*\troa=\troa$ (see \cite[Corollary~4.10]{Exel:twisted.partial.actions}), where $\troa\troa^*\troa$ means the closed linear span of $\{xy^*z\colon x,y,z\in \troa\}$. Moreover, it is easy to see that $\troa^*\troa$ and $\troa\troa^*$ (closed linear spans) are \cstar{}subalgebras of $\bound(\hils)$. We refer to \cite{Exel:twisted.partial.actions} and \cite{Zettl:TROs} for more details on ternary rings.

\begin{lemma}\label{lem:ElementsAssociatedToTRO}
Let $\troa \sbe\bound(\hils)$ be a {\tro} and let $u\in \bound(\hils)$. Consider the following properties\textup:
\begin{enumerate}[(a)]
\item $\troa^*u=\troa^*\troa$;\label{lem:ElementsAssociatedToTRO:item:M*u=M*M}
\item $u\troa^*=\troa\troa^*$;\label{lem:ElementsAssociatedToTRO:item:uM*=MM*}
\item $uu^*\troa=\troa$;\label{lem:ElementsAssociatedToTRO:item:uu*M=M}
\item $\troa u^*u=\troa$.\label{lem:ElementsAssociatedToTRO:item:Mu*u=M}
\end{enumerate}
Then
\begin{enumerate}[(i)]
\item~\eqref{lem:ElementsAssociatedToTRO:item:M*u=M*M} and~\eqref{lem:ElementsAssociatedToTRO:item:uM*=MM*} imply \eqref{lem:ElementsAssociatedToTRO:item:uu*M=M};
\item~\eqref{lem:ElementsAssociatedToTRO:item:M*u=M*M} and~\eqref{lem:ElementsAssociatedToTRO:item:uM*=MM*} imply \eqref{lem:ElementsAssociatedToTRO:item:Mu*u=M};
\item~\eqref{lem:ElementsAssociatedToTRO:item:M*u=M*M} and~\eqref{lem:ElementsAssociatedToTRO:item:uu*M=M} imply \eqref{lem:ElementsAssociatedToTRO:item:uM*=MM*};
\item~\eqref{lem:ElementsAssociatedToTRO:item:uM*=MM*} and~\eqref{lem:ElementsAssociatedToTRO:item:Mu*u=M} imply \eqref{lem:ElementsAssociatedToTRO:item:uu*M=M}.
\end{enumerate}
\end{lemma}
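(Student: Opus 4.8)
The plan is to prove all four implications by the same elementary mechanism, reducing each to bookkeeping with the defining identity $\troa=\troa\troa^*\troa$ (equivalently $\troa^*=\troa^*\troa\troa^*$). Throughout, every product of subspaces is understood as a closed linear span, and I will use that passing to adjoints is an order\nobreakdash-preserving bijection of the closed subspaces of $\bound(\hils)$ onto itself, so that any hypothesis may be freely replaced by its adjoint; thus (a) also gives $u^*\troa=\troa^*\troa$, and (b) also gives $\troa u^*=\troa\troa^*$. The three moves are: insert a copy of the {\tro} identity to create the factor pattern one wants, replace two adjacent factors using one of the hypotheses or its adjoint, and reassociate.

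For (i) I would simply run the chain
\[
uu^*\troa=u(u^*\troa)=u\troa^*\troa=(u\troa^*)\troa=\troa\troa^*\troa=\troa,
\]
applying the adjoint of (a) at the second equality and (b) at the fourth. Statement (ii) is the mirror image of (i): applying (i) to the {\tro} $\troa^*$ and the operator $u^*$ interchanges (a) with (b) and (c) with (d) (each up to an adjoint), so (ii) is an immediate consequence; explicitly one has $\troa u^*u=(\troa u^*)u=\troa\troa^*u=\troa(\troa^*u)=\troa\troa^*\troa=\troa$, now using the adjoint of (b) and then (a).

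For (iii) I would instead expand on the $\troa^*$ side, using $\troa^*=\troa^*\troa\troa^*$ together with the adjoint of (c), namely $\troa^*uu^*=\troa^*$:
\[
u\troa^*=u(\troa^*\troa)\troa^*=u(u^*\troa)\troa^*=(uu^*\troa)\troa^*=\troa\troa^*,
\]
where the second equality is the adjoint of (a) and the last is (c).

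The one case that is not immediate is (iv), because neither (b) nor (d) directly provides a factor $uu^*$ to the left of $\troa$, which is exactly what (c) asks for. The key step — really the only idea in the lemma — is to first extract the auxiliary identity $\troa=\troa\troa^*u$ from the hypotheses: (d) gives $\troa=\troa u^*u=(\troa u^*)u$, and the adjoint of (b) turns $\troa u^*$ into $\troa\troa^*$. Taking adjoints of this auxiliary identity produces precisely the missing factor, $u^*\troa\troa^*=\troa^*$, after which (c) follows from
\[
uu^*\troa=uu^*\troa\troa^*\troa=u(u^*\troa\troa^*)\troa=u\troa^*\troa=(u\troa^*)\troa=\troa\troa^*\troa=\troa,
\]
the first equality being the {\tro} identity, the third the auxiliary identity just derived, and the fifth hypothesis (b). I expect (iv) to be the only real obstacle; once the auxiliary identity $\troa=\troa\troa^*u$ is spotted, all four parts collapse to keeping track of which factorization of $\troa$ to insert and whether to use a hypothesis or its adjoint.
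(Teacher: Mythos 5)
Your proof is correct and uses essentially the same mechanism as the paper's: insert the identity $\troa=\troa\troa^*\troa$, then substitute a hypothesis or its adjoint. The only divergence is in (iv), where the paper's displayed computation actually derives (a) from (b) and (d) (leaving the passage to (c) to an implicit appeal to (i)), whereas you derive (c) directly via the auxiliary identity $\troa=\troa\troa^*u$ --- a self-contained and equally valid variant.
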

\begin{proof}
\begin{enumerate}[(i)]
\item $uu^*\troa=u\troa^*\troa=\troa\troa^*\troa=\troa$.
\item $\troa u^*u=\troa \troa^* u=\troa\troa^*\troa=\troa$.
\item $u\troa^*=u\troa^*\troa\troa^*=uu^*\troa\troa^*=\troa\troa^*$.
\item $\troa^*u=\troa^*\troa\troa^*u=\troa^*\troa u^*u=\troa^*\troa$.
\end{enumerate}
\vskip-14pt
\end{proof}

\begin{definition}\label{def:RegularTRO}
We say that a {\tro} $\troa\sbe\bound(\hils)$ is regular if there is $u\in \bound(\hils)$ satisfying the
properties~\eqref{lem:ElementsAssociatedToTRO:item:M*u=M*M}-\eqref{lem:ElementsAssociatedToTRO:item:Mu*u=M} of the lemma above.
In this case, we say that $u$ is \emph{associated to $\troa$} and write $u\sim \troa$.
\end{definition}

By \cite[Proposition~5.3]{Exel:twisted.partial.actions}, the above definition is equivalent to Definition~5.1 in \cite{Exel:twisted.partial.actions}.

\begin{example}
Essentially, every regular {\tro} $\troa\sbe \bound(\hils)$ has the following form (this will become clearer as we proceed):
suppose $B\sbe \bound(\hils)$ is a \cstar{}subalgebra and suppose that $m$ is an element of $\bound(\hils)$
such that $m^*mB$ (closed linear span) is equal to $B$. Then it is easy to see that $\troa\defeq mB$ (closed linear span) is a regular {\tro}.
Note that $\troa^*\troa=B$ and that $A\defeq \troa\troa^*=uBu^*$ is a \cstar{}subalgebra of $\bound(\hils)$ which is isomorphic to $B$ (see also Corollary~\ref{cor:RegularIFFPartialIsometry} below).
\end{example}

\begin{proposition}\label{prop:PolarDecompositionAssociated}
Let $\troa\sbe\bound(\hils)$ be a {\tro}. Suppose $m\in \troa$ is associated to $\troa$ and let $m=u|m|$ be the polar decomposition of $m$.
Then $u$ is associated to $\troa$, $m^*$ and $u^*$ are associated to $\troa^*$,
$|m|$ and $m^*m$ are associated to $E^*E$, and $|m^*|$ and $mm^*$ are associated to $\troa\troa^*$.
\end{proposition}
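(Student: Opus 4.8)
The plan is to reduce the six assertions to two genuine ones and deduce the rest by symmetry. The first tool is the elementary principle that $v\sim M$ if and only if $v^*\sim M^*$: applying $*$ to each of the four relations of Definition~\ref{def:RegularTRO} turns the conditions defining $v\sim M$ into exactly those defining $v^*\sim M^*$ (e.g.\ $M^*v=M^*M$ becomes $v^*M=M^*M$, that is, $(M^*)^*(v^*)=(M^*)^*(M^*)$). Applied to $v=m$ this yields $m^*\sim M^*$ at once, and once I have proved $u\sim M$ it will likewise give $u^*\sim M^*$. The second tool is that the two $MM^*$-claims are simply the $M^*M$-claims for the {\tro} $M^*$ with associated element $m^*\in M^*$: since $(m^*)^*(m^*)=mm^*$ and $|m^*|=\big((m^*)^*(m^*)\big)^{1/2}$, a general statement ``$|n|$ and $n^*n$ are associated to $N^*N$'' applied to $(N,n)=(M^*,m^*)$ delivers $|m^*|,mm^*\sim MM^*$. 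Thus it suffices to establish $|m|,m^*m\sim M^*M$ and $u\sim M$.

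For the $M^*M$-claim, set $B\defeq M^*M$, a \cstar{}subalgebra, and note $|m|,|m|^2=m^*m\in B$ by continuous functional calculus. Taking adjoints of the hypothesis $Mm^*m=M$ gives $m^*mM^*=M^*$, whence $B|m|^2=M^*(Mm^*m)=M^*M=B$ and, by self-adjointness, $|m|^2B=B$. The standard fact that a positive element and its square root generate the same closed one-sided ideals ($\overline{Bb}=\overline{Bb^{1/2}}$) upgrades these to $B|m|=|m|B=B$. Since $B$ is a \cstar{}algebra, the identities $B|m|=|m|B=B$ and $|m|^2B=B|m|^2=B$ are precisely conditions \eqref{lem:ElementsAssociatedToTRO:item:M*u=M*M}--\eqref{lem:ElementsAssociatedToTRO:item:Mu*u=M} for $|m|$, and the analogous identities with $|m|$ replaced by $|m|^2$ handle $m^*m$.

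The substantial step is $u\sim M$. From the polar decomposition I would use $u|m|=m=|m^*|u$, that $uu^*$ and $u^*u$ are the range projections of $m$ and of $|m|$, and consequently that $uu^*x=x$ and $xu^*u=x$ for every $x\in M$. Condition~\eqref{lem:ElementsAssociatedToTRO:item:uu*M=M} is then immediate, since $uu^*M=uu^*mm^*M=mm^*M=M$. It therefore remains only to prove \eqref{lem:ElementsAssociatedToTRO:item:M*u=M*M}, namely $M^*u=B$, after which conditions \eqref{lem:ElementsAssociatedToTRO:item:uM*=MM*} and \eqref{lem:ElementsAssociatedToTRO:item:Mu*u=M} follow formally from Lemma~\ref{lem:ElementsAssociatedToTRO}. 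The hard part is exactly this identity: one cannot simply ``cancel'' $|m|$ in $M^*u\,|m|=M^*m=B$, because $|m|$ is not invertible and $B$ need not be strongly closed, so the naive support-projection approximations of $u^*u$ converge only strongly — too weak for the norm-closed spans in play.

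The remedy is strict positivity. As $\overline{|m|B}=B$, the element $|m|$ is strictly positive in $B$, so $e_n\defeq n|m|(1+n|m|)^{-1}\in B$ is an approximate unit; symmetrically $|m^*|$ is strictly positive in $A\defeq MM^*$ with approximate unit $e_n'\defeq n|m^*|(1+n|m^*|)^{-1}$. For $M^*u\sbe B$ I would write, for $a\in M$, the product $a^*u\,e_n=a^*m\,\beta_n(|m|)\in B$ (with $\beta_n(t)=n(1+nt)^{-1}$, so $|m|\beta_n(|m|)=e_n$ and $a^*m\in M^*m=B$), and then check $a^*u\,e_n\to a^*u$ in norm: the identity $u|m|=|m^*|u$ gives $u(1-e_n)=(1-e_n')u$, so $\norm{a^*u(1-e_n)}\le\norm{a^*(1-e_n')}$, and $\norm{a^*(1-e_n')}^2=\norm{(1-e_n')aa^*(1-e_n')}\to0$ because $aa^*\in A$ and $(e_n')$ is an approximate unit of $A$. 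For the reverse inclusion $B\sbe M^*u$ I note that $e_n=u^*\big(m\beta_n(|m|)\big)$ is self-adjoint with $m\beta_n(|m|)\in M$, hence $e_n\in M^*u$; so for $b\in B$ one has $b\,e_n\in M^*MM^*u\sbe M^*u$ with $b\,e_n\to b$, forcing $b\in M^*u$. This yields $M^*u=B$, completing $u\sim M$ and, via the reductions of the first paragraph, the whole proposition. The crux — and the only place where more than formal manipulation is needed — is the passage from strong to norm convergence, secured by strict positivity and the intertwining $u(1-e_n)=(1-e_n')u$.
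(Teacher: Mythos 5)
Your proof is correct, and its overall skeleton coincides with the paper's: reduce by the involution symmetry ($v\sim\troa$ iff $v^*\sim\troa^*$), obtain the $\troa\troa^*$\nb-statements by applying the $\troa^*\troa$\nb-statements to the pair $(\troa^*,m^*)$, prove $m^*m\sim\troa^*\troa$ first and upgrade to $|m|$ by the square-root trick (the paper does this via the chain $|m|\troa^*\troa\sbe\troa^*\troa=|m|^2\troa^*\troa\sbe|m|\troa^*\troa$ rather than your $\overline{Bb}=\overline{Bb^{1/2}}$, but these are the same idea). Where you genuinely diverge is the step $u\sim\troa$, which you treat as the analytic crux and settle with strict positivity, the approximate units $e_n=g_n(|m|)$ and $e_n'=g_n(|m^*|)$, and the intertwining $u(1-e_n)=(1-e_n')u$. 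The paper dispenses with all of this in two lines of algebra: from $m\troa^*=\troa\troa^*$ one gets $\troa=\troa\troa^*\troa=m\troa^*\troa$, hence
\begin{equation*}
u^*\troa=u^*m\troa^*\troa=|m|\troa^*\troa=\troa^*\troa,
\qquad
u\troa^*=um^*\troa\troa^*=|m^*|\troa\troa^*=\troa\troa^*,
\end{equation*}
using $u^*m=|m|$, $um^*=|m^*|$ and the already-established facts $|m|\sim\troa^*\troa$, $|m^*|\sim\troa\troa^*$; the remaining two conditions then come from Lemma~\ref{lem:ElementsAssociatedToTRO}. So the ``cancellation of $|m|$'' you flag as the obstruction never has to be performed: instead of dividing $\troa^*u|m|=\troa^*\troa$ by $|m|$, one multiplies the identity $\troa=m\troa^*\troa$ on the left by the bounded operator $u^*$. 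Your route is valid and self-contained, but the strict-positivity machinery is avoidable; what it buys is only an alternative, more analytic verification of an identity the paper obtains formally.
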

\begin{proof}
Since $m\troa^*\troa=\troa$, we get $m^*m\troa^*\troa=m^*\troa=\troa^*\troa$. Taking adjoints, we also get $\troa^*\troa m^*m=\troa^*\troa$.
This means that $m^*m$ is associated to $\troa^*\troa$. This implies, in particular, that $m^*m$ and hence also $|m|=(m^*m)^{\frac{1}{2}}$
are multipliers of the \cstar{}algebra $\troa^*\troa$. Thus
\begin{equation*}
|m|\troa^*\troa\sbe \troa^*\troa= m^*m \troa^*\troa=|m|^2\troa^*\troa = |m||m|\troa^*\troa\sbe |m|\troa^*\troa.
\end{equation*}
This yields $\troa^*\troa|m|=|m|\troa^*\troa=\troa^*\troa$, so that $|m|$ is associated to $\troa^*\troa$.
Applying the involution to the properties~\eqref{lem:ElementsAssociatedToTRO:item:M*u=M*M}-\eqref{lem:ElementsAssociatedToTRO:item:Mu*u=M} of Lemma~\ref{lem:ElementsAssociatedToTRO}, it is easy to see that $n\in \bound(\hils)$ is associated to $\troa$ if and only if $n^*$ is associated to $\troa^*$. Thus $m^*$ is associated to $\troa^*$, and hence by the argument above,
$mm^*$ and $|m^*|$ are associated to $\troa\troa^*$. It remains to see that $u$ is associated to $\troa$. We have
\begin{equation*}
u^*\troa=u^*\troa\troa^*\troa=u^*m\troa^*\troa=|m|\troa^*\troa=\troa^*\troa
\end{equation*}
and (using that $um^*=|m^*|$)
\begin{equation*}
u\troa^*=u\troa^*\troa\troa^*=um^*\troa\troa^*=|m^*|\troa\troa^*=\troa\troa^*
\end{equation*}
\vskip-14pt
\end{proof}

Notice that if $u\sim \troa$, then
\begin{equation*}
\troa H=u\troa^*\troa H\sbe uH\quad\mbox{and}\quad \troa^*H=u^*\troa\troa^*H\sbe u^*H.
\end{equation*}
If both inclusions above are equalities, we say that $u$ is \emph{strictly associated to $\troa$} and write $u\ssim \troa$.
We can always assume that $u$ is strict by replacing $u$ by $up$, where
$p$ is the projection onto $\troa^*\troa H$. In fact, $upH=u\troa^*\troa H=\troa H$ and $up\sim \troa$ because
\begin{equation*}
up\troa^*=up\troa^*\troa\troa^*=u\troa^*=\troa\troa^*\quad\mbox{and}\quad \troa^*up=\troa^*\troa p=\troa^*\troa.
\end{equation*}
Given a \cstar{}subalgebra $A\sbe \bound(\hils)$, we write $1_A$ for the unit of the multiplier \cstar{}algebra $\mult(A)$ of $A$
(which we also represent in $\bound(\hils)$ is the usual way).

\begin{proposition}\label{prop:PartialIsometryStrictlyAssociated}
Let $\troa\sbe\bound(\hils)$ be a {\tro}, and let $u\in \bound(\hils)$ be a partial isometry associated to $\troa$.
Then the following assertions are equivalent\textup:
\begin{enumerate}[(i)]
\item $u$ is strictly associated to $\troa$;\label{prop:PartialIsometryStrictlyAssociated:item:uStrictlyAssociatedToM}
\item $\troa H=uH$ or $\troa^*H=u^*H$;\label{prop:PartialIsometryStrictlyAssociated:item:MH=uH_or_M*H=u*H}
\item $u^*u=1_{\troa^*\troa}$ and $uu^*=1_{\troa \troa^*}$;\label{prop:PartialIsometryStrictlyAssociated:item:u*u=1_and_uu*=1}
\item $u^*u=1_{\troa^*\troa}$ or $uu^*=1_{\troa \troa^*}$.\label{prop:PartialIsometryStrictlyAssociated:item:u*u=1_or_uu*=1}
\end{enumerate}
\end{proposition}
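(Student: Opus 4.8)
The plan is to establish the chain of implications $(\ref{prop:PartialIsometryStrictlyAssociated:item:uStrictlyAssociatedToM}) \Rightarrow (\ref{prop:PartialIsometryStrictlyAssociated:item:MH=uH_or_M*H=u*H}) \Rightarrow (\ref{prop:PartialIsometryStrictlyAssociated:item:u*u=1_and_uu*=1}) \Rightarrow (\ref{prop:PartialIsometryStrictlyAssociated:item:u*u=1_or_uu*=1}) \Rightarrow (\ref{prop:PartialIsometryStrictlyAssociated:item:MH=uH_or_M*H=u*H}) \Rightarrow (\ref{prop:PartialIsometryStrictlyAssociated:item:uStrictlyAssociatedToM})$, closing the loop. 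The implication $(\ref{prop:PartialIsometryStrictlyAssociated:item:uStrictlyAssociatedToM}) \Rightarrow (\ref{prop:PartialIsometryStrictlyAssociated:item:MH=uH_or_M*H=u*H})$ is immediate from the definition of strict association, and $(\ref{prop:PartialIsometryStrictlyAssociated:item:u*u=1_and_uu*=1}) \Rightarrow (\ref{prop:PartialIsometryStrictlyAssociated:item:u*u=1_or_uu*=1})$ is trivial. So the substantive work lies in the three remaining implications.

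For $(\ref{prop:PartialIsometryStrictlyAssociated:item:MH=uH_or_M*H=u*H}) \Rightarrow (\ref{prop:PartialIsometryStrictlyAssociated:item:u*u=1_and_uu*=1})$, I would first translate the projection statements into operator-range language. Since $u$ is a partial isometry, $u^*u$ is the projection onto $(\ker u)^\perp = \overline{u^*H}$ and $uu^*$ is the projection onto $\overline{uH}$. The key recognition is that $1_{\troa^*\troa}$, viewed as an operator on $\hils$, is the projection onto $\overline{\troa^*\troa H} = \overline{\troa^*H}$ (using $\troa^*\troa\troa^* = \troa^*$), and similarly $1_{\troa\troa^*}$ is the projection onto $\overline{\troa\troa^* H} = \overline{\troa H}$. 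Thus $u^*u = 1_{\troa^*\troa}$ says exactly $\overline{u^*H} = \overline{\troa^*H}$, i.e.\ $u^*H = \troa^*H$, and dually for $uu^*$. So I must show that if \emph{one} of the range equalities $\troa H = uH$ or $\troa^*H = u^*H$ holds, then \emph{both} projection identities follow. The mechanism is that $u$ is a partial isometry implementing an isometry between its initial and final spaces, and because $u\sim\troa$ the two subspaces $\overline{\troa^*H}$ and $\overline{\troa H}$ are intertwined by $u$; knowing one range coincides with the corresponding $u$-space forces the dimensions (more precisely the projections) to match on the other side as well. Concretely, from $\troa H = uH$ I would derive $uu^* = 1_{\troa\troa^*}$ directly, and then use $u^* u = u^* u u^* u$ together with $u u^* = 1_{\troa\troa^*}$ and the associated relations of Lemma~\ref{lem:ElementsAssociatedToTRO} to pull back the equality to the initial space, yielding $u^*u = 1_{\troa^*\troa}$. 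The symmetric argument handles the case $\troa^*H = u^*H$ by passing to $\troa^*$ and using Proposition~\ref{prop:PolarDecompositionAssociated}.

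For $(\ref{prop:PartialIsometryStrictlyAssociated:item:u*u=1_or_uu*=1}) \Rightarrow (\ref{prop:PartialIsometryStrictlyAssociated:item:MH=uH_or_M*H=u*H})$, I would simply reverse the translation: if $uu^* = 1_{\troa\troa^*}$ then $\overline{uH} = \overline{\troa\troa^*H} = \overline{\troa H}$, and since $uH = uu^*H$ and $\troa H = \troa\troa^*H$ are already closed (being ranges of the projections $uu^*$ and $1_{\troa\troa^*}$ respectively, where one uses that $\troa H = \troa\troa^* H$ is the range of the projection $1_{\troa\troa^*}$), the closures may be dropped to give $uH = \troa H$. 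The case $u^*u = 1_{\troa^*\troa}$ dually gives $u^*H = \troa^*H$. Finally $(\ref{prop:PartialIsometryStrictlyAssociated:item:MH=uH_or_M*H=u*H}) \Rightarrow (\ref{prop:PartialIsometryStrictlyAssociated:item:uStrictlyAssociatedToM})$ follows because strict association, by definition, requires \emph{both} $\troa H = uH$ and $\troa^*H = u^*H$, and the already-established equivalence with~(\ref{prop:PartialIsometryStrictlyAssociated:item:u*u=1_and_uu*=1}) shows that one of these equalities forces the other; thus having either one suffices to recover strict association.

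I expect the main obstacle to be the implication $(\ref{prop:PartialIsometryStrictlyAssociated:item:MH=uH_or_M*H=u*H}) \Rightarrow (\ref{prop:PartialIsometryStrictlyAssociated:item:u*u=1_and_uu*=1})$, specifically the step of propagating a single range equality across to the opposite projection identity. The delicate point is that knowing $uH = \troa H$ controls the final projection $uu^*$ cleanly, but transferring this to the initial projection $u^*u$ requires exploiting that $u$ restricts to an isometry between $\overline{u^*H}$ and $\overline{uH}$ and that this isometry is compatible with the \tro{} structure through the relations $\troa^* u = \troa^*\troa$ and $u\troa^* = \troa\troa^*$. I would make this precise by showing that the projection $p = u^*u$ and $1_{\troa^*\troa}$ have the same range: the inclusion $\overline{u^*H} \subseteq \overline{\troa^*H}$ always holds because $u^*H = u^*uu^*H = u^*\overline{\troa H} \subseteq \overline{\troa^*\troa\troa^* H} = \overline{\troa^* H}$ using the associated relations, and the reverse inclusion comes from $\troa^* H = u^*\troa\troa^* H = u^*\troa H = u^*uH \subseteq u^*H$ once the final-space equality $\troa H = uH$ is in hand; equality of the two closed ranges then yields $u^*u = 1_{\troa^*\troa}$, as required.
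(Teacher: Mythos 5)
Your proposal is correct and follows essentially the same route as the paper: identify $1_{\troa^*\troa}$ and $1_{\troa\troa^*}$ with the range projections onto $\overline{\troa^*H}$ and $\overline{\troa H}$, and propagate a single range equality to the other side via the relations $\troa^*u=\troa^*\troa$ and $u\troa^*=\troa\troa^*$ (your computation $\troa^*H=u^*\troa\troa^*H=u^*uH\sbe u^*H$ is the paper's, read backwards). One small slip: the inclusion $\overline{u^*H}\sbe\overline{\troa^*H}$ is \emph{not} unconditional --- your own chain uses $\overline{uH}=\overline{\troa H}$, i.e.\ the hypothesis of the case being treated --- but since that hypothesis is in force there, the argument stands.
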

\begin{proof}
If $uH=\troa H$, then $p=u^*u$ is the projection onto
\begin{equation*}
u^*uH=u^*H=\troa^*H=\troa^*\troa H.
\end{equation*}
Thus, $px=x=xp$ for all $x\in \troa^*\troa$, so that $p=1_{\troa^*\troa}$.
Conversely, if $u^*u=1_{\troa^*\troa}$, then
\begin{equation*}
uH=uu^*uH=u1_{\troa^*\troa}H=u\troa^*\troa H=\troa\troa^*\troa H=\troa H.
\end{equation*}
Similarly, $u^*H=\troa H$ if and only if $uu^*=1_{\troa\troa^*}$.
Now if $\troa H=uH$, then
\begin{equation*}
u^*H=u^*uu^*H=u^*\troa\troa^*H=\troa^*\troa\troa^*H=\troa^*H.
\end{equation*}
Similarly, if $u^*H=\troa H$, then $u H=\troa H$. All these considerations imply the
equivalences~\eqref{prop:PartialIsometryStrictlyAssociated:item:uStrictlyAssociatedToM}$\Leftrightarrow$
~\eqref{prop:PartialIsometryStrictlyAssociated:item:MH=uH_or_M*H=u*H}$\Leftrightarrow$
~\eqref{prop:PartialIsometryStrictlyAssociated:item:u*u=1_and_uu*=1}$\Leftrightarrow$
~\eqref{prop:PartialIsometryStrictlyAssociated:item:u*u=1_or_uu*=1}.
\end{proof}

If $m=u|m|$ is the polar decomposition of an element $m\in \bound(\hils)$ associated to a {\tro} $\troa\sbe\bound(\hils)$, then
$m$ is strictly associated to $\troa$ if and only $u$ is strictly associated to $\troa$. In fact, this follows from the equalities
\begin{equation*}
uH=u|m|H=mH\quad\mbox{and}\quad u^*H=u^*|m^*|H=m^*H.
\end{equation*}
As a consequence of propositions~\ref{prop:PolarDecompositionAssociated} and~\ref{prop:PartialIsometryStrictlyAssociated}, we get the following:

\begin{corollary}\label{cor:RegularIFFPartialIsometry}
A {\tro} $\troa\sbe\bound(\hils)$ is regular if and only if there is a partial isometry $u\in \bound(\hils)$ satisfying
\begin{equation*}
u\troa^*\troa=\troa,\quad \troa\troa^*u=\troa,\quad u^*u=1_{\troa^*\troa}\quad\mbox{and}\quad uu^*=1_{\troa\troa^*}.
\end{equation*}
In particular, the map $a\mapsto u^*au$ is a \Star{}isomorphism from $\troa\troa^*$ to $\troa^*\troa$.
\end{corollary}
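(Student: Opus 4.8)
The plan is to assemble the corollary from the three results immediately preceding it, using the polar decomposition as the bridge between an arbitrary associated element and a partial isometry. For the forward direction, I would start from the definition of regularity: if $\troa$ is regular, then by Definition~\ref{def:RegularTRO} there is some $u\in\bound(\hils)$ associated to $\troa$ satisfying properties \eqref{lem:ElementsAssociatedToTRO:item:M*u=M*M}--\eqref{lem:ElementsAssociatedToTRO:item:Mu*u=M}. This $u$ need not be a partial isometry, but I can pass to its polar decomposition $u=v|u|$. By Proposition~\ref{prop:PolarDecompositionAssociated}, the partial isometry $v$ appearing in the polar decomposition is again associated to $\troa$. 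So without loss of generality I may assume I have a partial isometry associated to $\troa$ from the outset.

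Next I would arrange strictness. As remarked in the paragraph following Proposition~\ref{prop:PolarDecompositionAssociated}, any $u\sim\troa$ can be replaced by $up$, where $p$ is the projection onto $\troa^*\troa H$, and $up$ is strictly associated to $\troa$; moreover if $u$ was already a partial isometry then so is $up$ (as $p$ is a projection commuting appropriately). Having a partial isometry $u$ that is strictly associated to $\troa$, I then invoke Proposition~\ref{prop:PartialIsometryStrictlyAssociated}, specifically the implication \eqref{prop:PartialIsometryStrictlyAssociated:item:uStrictlyAssociatedToM}$\Rightarrow$\eqref{prop:PartialIsometryStrictlyAssociated:item:u*u=1_and_uu*=1}, to conclude $u^*u=1_{\troa^*\troa}$ and $uu^*=1_{\troa\troa^*}$. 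The two remaining conditions $u\troa^*\troa=\troa$ and $\troa\troa^*u=\troa$ are just properties \eqref{lem:ElementsAssociatedToTRO:item:uu*M=M} and \eqref{lem:ElementsAssociatedToTRO:item:Mu*u=M} rewritten via $uu^*=1_{\troa\troa^*}$ and $u^*u=1_{\troa^*\troa}$ — for instance $u\troa^*\troa=uu^*u\troa^*\troa=uu^*\troa=\troa$ using that $u$ is a partial isometry and property \eqref{lem:ElementsAssociatedToTRO:item:uu*M=M}. The converse direction is essentially immediate: a partial isometry $u$ satisfying the four displayed equalities satisfies in particular properties \eqref{lem:ElementsAssociatedToTRO:item:M*u=M*M}--\eqref{lem:ElementsAssociatedToTRO:item:Mu*u=M} (one recovers $\troa^*u=\troa^*\troa$ and $u\troa^*=\troa\troa^*$ by multiplying the given relations by $u^*u$ and $uu^*$ and using that these are the units of the relevant $C^*$-algebras), so $\troa$ is regular.

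For the final assertion about the $*$-isomorphism, I would verify directly that $a\mapsto u^*au$ maps $\troa\troa^*$ into $\troa^*\troa$ and is a $*$-homomorphism, with inverse $b\mapsto ubu^*$. The computation $u^*(\troa\troa^*)u=u^*\troa\troa^*u$ lands in $\troa^*\troa$ because $u^*\troa=\troa^*\troa$ (from property \eqref{lem:ElementsAssociatedToTRO:item:M*u=M*M} after adjoining, or directly) and $\troa^*u=\troa^*\troa$; multiplicativity follows from $uu^*$ acting as the unit $1_{\troa\troa^*}$ on $\troa\troa^*$, so $u^*(ab)u=u^*a(uu^*)bu=(u^*au)(u^*bu)$ for $a,b\in\troa\troa^*$; and that it is a bijection onto $\troa^*\troa$ follows from $uu^*=1_{\troa\troa^*}$, $u^*u=1_{\troa^*\troa}$ making $b\mapsto ubu^*$ a two-sided inverse.

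The main obstacle, I expect, is bookkeeping rather than conceptual: the corollary is explicitly flagged as a consequence of the two preceding propositions, so the real work lies in correctly chaining the polar-decomposition and strictness reductions and in checking that the passage $u\mapsto up$ preserves the partial-isometry property. The one point requiring genuine care is confirming that the four conditions in the statement are exactly equivalent to the original associativity properties \eqref{lem:ElementsAssociatedToTRO:item:M*u=M*M}--\eqref{lem:ElementsAssociatedToTRO:item:Mu*u=M} once $u$ is a partial isometry with the stated range and source projections, since this is what legitimizes calling the displayed list a characterization of regularity.
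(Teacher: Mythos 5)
Your proposal is correct and follows essentially the route the paper intends: the paper offers no written proof, merely presenting the corollary as a consequence of Proposition~\ref{prop:PolarDecompositionAssociated}, Proposition~\ref{prop:PartialIsometryStrictlyAssociated} and the intervening remark on passing to $up$ with $p$ the projection onto $\troa^*\troa\hils$, which is exactly the chain you assemble (polar decomposition to get a partial isometry, then the strictness reduction, then the equivalence \eqref{prop:PartialIsometryStrictlyAssociated:item:uStrictlyAssociatedToM}$\Leftrightarrow$\eqref{prop:PartialIsometryStrictlyAssociated:item:u*u=1_and_uu*=1}). The only points worth tightening are that $u\troa^*\troa=\troa$ and $\troa\troa^*u=\troa$ follow in one step from \eqref{lem:ElementsAssociatedToTRO:item:uM*=MM*} and \eqref{lem:ElementsAssociatedToTRO:item:M*u=M*M} respectively (your displayed chain reads as circular unless parsed carefully), and that the commutation of $p$ with $u^*u$ needed for $up$ to remain a partial isometry deserves the one-line verification $u^*u\,\troa^*\troa=\troa^*\troa$.
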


Later, we shall need the following fact:

\begin{proposition}\label{prop:strictlyAssociated=>weakClosure}
Let $\troa\sbe\bound(\hils)$ be a {\tro} and let $u\in \bound(\hils)$ be strictly associated to $\troa$.
Then $u$ lies in the weak closure of $\troa$ within $\bound(\hils)$.
\end{proposition}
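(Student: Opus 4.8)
The plan is to realize $u$ as a strong-operator limit of a net lying \emph{inside} $\troa$; since strong convergence implies weak convergence, this will place $u$ in the weak closure of $\troa$. The natural candidate is the net $(u e_\lambda)_\lambda$, where $(e_\lambda)_\lambda$ is an approximate unit for the \cstar{}algebra $\troa^*\troa\sbe\bound(\hils)$. Two things must then be verified: that each $u e_\lambda$ belongs to $\troa$, and that $u e_\lambda \to u$ strongly.

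For the membership, I would prove the elementwise inclusion $u a \in \troa$ for every $a \in \troa^*\troa$. Since $\troa^*\troa$ is, by definition, the closed linear span of the products $x^* y$ with $x, y \in \troa$, and since $\troa$ is norm-closed, it suffices by linearity and continuity to treat a single such product $u x^* y$. For this, property~\eqref{lem:ElementsAssociatedToTRO:item:uM*=MM*} of the association gives $u x^* \in u\troa^* = \troa\troa^*$, and then the {\tro} identity $\troa\troa^*\troa=\troa$ yields $(u x^*) y \in \troa\troa^*\troa=\troa$. Passing to closed spans and using that $\troa$ is closed gives $u a \in \troa$ for all $a \in \troa^*\troa$; in particular $u e_\lambda \in \troa$ for every $\lambda$.

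For the convergence, I would invoke the standard fact that an approximate unit $(e_\lambda)$ of a \cstar{}algebra $B\sbe\bound(\hils)$ converges in the strong operator topology to the orthogonal projection $p$ onto the essential subspace $\overline{B\hils}$. Applied to $B=\troa^*\troa$ this gives $e_\lambda \to p$ strongly, where $\ran p = \overline{\troa^*\troa\,\hils} = \overline{\troa^*\hils}$ (the last equality using $\troa^*=\troa^*\troa\troa^*$). It then remains to check that $u p = u$, equivalently $u(1-p)=0$. This is exactly where strict association enters: by hypothesis $\overline{u^*\hils}=\overline{\troa^*\hils}=\ran p$, hence $\ker u = (\ran u^*)^\perp = (\ran p)^\perp = \ran(1-p)$, so $1-p$ maps $\hils$ into $\ker u$ and $u(1-p)=0$. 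Consequently $u e_\lambda\xi = u(e_\lambda\xi) \to u(p\xi)=u\xi$ for all $\xi\in\hils$, i.e.\ $u e_\lambda \to u$ strongly.

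Combining the two steps, $u$ is the strong-operator limit of the net $(u e_\lambda)$ drawn from $\troa$, so $u$ lies in the weak closure of $\troa$. I expect the identity $up=u$ to be the crux of the argument: without strict association one only obtains the inclusion $\overline{\troa^*\hils}\sbe\overline{u^*\hils}$, so $u$ could carry support orthogonal to $\ran p$, the net $(u e_\lambda)$ would converge to $up\neq u$, and the conclusion would fail. Strict association is precisely the hypothesis that removes this defect.
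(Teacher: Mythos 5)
Your proof is correct and is essentially the mirror image of the paper's argument: the paper multiplies $u$ on the left by an approximate unit of $\troa\troa^*$ and deduces strong convergence from Cohen factorization of $u\xi\in u\hils=\troa\hils\sbe\troa\troa^*\hils$, whereas you multiply on the right by an approximate unit of $\troa^*\troa$ and deduce it from the support projection of $\troa^*\troa$ together with $\ker u=(\overline{u^*\hils})^\perp$. Both versions invoke one half of strict association in the same way and conclude by noting that the approximating net lies in $\troa$, so this is the same approach.
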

\begin{proof} Let $\{e_i\}_i$ be an approximate unit for the \cstar{}algebra $\troa\troa^*$. We claim that
$u =\lim\limits_i e_iu$ in the strong topology of $\bound(\hils)$. In order to prove this let $\xi\in\hils$. Then
\begin{equation*}
u\xi\in u\hils = \troa\hils = \troa\troa^*\troa\hils\sbe \troa\troa^*\hils,
\end{equation*}
so by Cohen's Factorization Theorem, we may write $u\xi = a\eta$, with $a \in \troa\troa^*$ and $\eta\in \hils$. Therefore
\begin{equation*}
\lim\limits_i e_iu\xi =\lim\limits_i e_ia\eta = a\eta = u\xi,
\end{equation*}
thus proving our claim. It follows that $u$ belongs to the weak closure of $\troa\troa^*u=\troa\troa^*\troa = \troa$.
\end{proof}

Let $\troa$ be a {\tro}. An \emph{ideal} of $\troa$ is
a closed subspace $\trob\sbe\troa$ satisfying $\trob\troa^*\troa\sbe \trob$
and $\troa\troa^*\trob\sbe\trob$. Alternatively, $\trob$ is an ideal of $\troa$ if and only if $\trob$ is a sub-{\tro} of $\troa$
for which $\trob^*\trob$ is an ideal of $\troa^*\troa$ and $\trob\trob^*$ is an ideal of $\troa\troa^*$.
It is easy to see that our definition of ideals in {\tros} is equivalent to Definition~6.1 in \cite{Exel:twisted.partial.actions}. By Proposition~6.3 in \cite{Exel:twisted.partial.actions},
if $\troa$ is a regular {\tro}, then so is any ideal $\trob\sbe\troa$. Moreover, if $u\sim \troa$, then $u\sim\trob$.

\begin{definition}\label{def:TROLocallyRegular}
We say that a {\tro} $\troa$ is \emph{locally regular} if it generated by regular ideals
in the sense that there is a family $\{\troa_i\}_{i\in I}$ of ideals $\troa_i\sbe\troa$ such that each $\troa_i$ is a regular {\tro} and
$$\troa=\overline{\sum\limits_{i\in I}\troa_i}.$$
\end{definition}

\begin{proposition}\label{prop:commutative=>loc.regular}
Let $\troa\sbe\bound(\hils)$ be a {\tro}. If $A\defeq\troa\troa^*$ and $B\defeq\troa^*\troa$
are commutative \cstar{}algebras, then $\troa$ is locally regular.
\end{proposition}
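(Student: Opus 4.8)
The plan is to cover $\troa$ by the ideals generated by its individual elements and to show that each such ideal is regular, with the generating element itself playing the role of the associated operator. Concretely, for each $m\in\troa$ I would set $\trob_m\defeq\overline{AmB}$, the closed linear span of $\{amb:a\in A,\ b\in B\}$. Since $\trob_m=\troa\troa^*m\troa^*\troa\sbe\troa$ and, using $AA=A$, $BB=B$, one has $\trob_m\troa^*\troa=\trob_m=\troa\troa^*\trob_m$, each $\trob_m$ is an ideal of $\troa$ in the sense defined just above Definition~\ref{def:TROLocallyRegular}. It is also a sub-{\tro}, being an ideal.

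Next I would record that $m\in\trob_m$. Because $mm^*\in A$ one has $(mm^*)^{1/n}m\to m$, so $m\in\overline{Am}$; and absorbing an approximate unit of $B$ on the right (note $me_i\to m$ for any approximate unit $(e_i)$ of $B$, since $m^*m\in B$) shows $\overline{Am}\sbe\trob_m$, hence $m\in\trob_m$. Consequently $\troa=\overline{\sum_{m\in\troa}\trob_m}$, so by Definition~\ref{def:TROLocallyRegular} it remains only to prove that every $\trob_m$ is regular.

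For regularity I claim that $m$ is associated to $\trob_m$, i.e.\ that $m$ satisfies properties \eqref{lem:ElementsAssociatedToTRO:item:M*u=M*M}--\eqref{lem:ElementsAssociatedToTRO:item:Mu*u=M} of Lemma~\ref{lem:ElementsAssociatedToTRO} relative to $\trob_m$. By that lemma it is enough to verify \eqref{lem:ElementsAssociatedToTRO:item:M*u=M*M} and \eqref{lem:ElementsAssociatedToTRO:item:uM*=MM*}, namely $\trob_m^*m=\trob_m^*\trob_m$ and $m\trob_m^*=\trob_m\trob_m^*$; the other two then follow formally, and $\trob_m$ is regular by Definition~\ref{def:RegularTRO}. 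This is exactly where commutativity enters. The key observation is that $m^*Am=m^*\troa\troa^*m\sbe\troa^*\troa=B$ and, dually, $mBm^*\sbe A$, so these products land in the relevant commutative \cstar{}algebra. Writing $\trob_m^*\trob_m=\overline{Bm^*AmB}$ and $\trob_m^*m=\overline{Bm^*Am}$, the two coincide once the surplus right factor $B$ is absorbed, which uses that the central element $m^*Am$ of $B$ may be commuted past it; this gives property~\eqref{lem:ElementsAssociatedToTRO:item:M*u=M*M} and uses commutativity of $B$. Symmetrically, $\trob_m\trob_m^*=\overline{AmBm^*A}$ equals $m\trob_m^*=\overline{mBm^*A}$ because $mBm^*$ is central in $A$, giving property~\eqref{lem:ElementsAssociatedToTRO:item:uM*=MM*} and using commutativity of $A$.

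The only genuine work is the two span identities of the previous paragraph. The inclusions ``$\sbe$'' are immediate from approximate units, and the reverse inclusions are precisely where commutativity of $B$ (respectively $A$) is needed, to commute the central element $m^*Am$ (respectively $mBm^*$) past the redundant idempotent factor. Everything else reduces to routine manipulation of closed linear spans together with the identity $\troa\troa^*\troa=\troa$.
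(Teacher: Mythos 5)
Your proposal is correct and follows essentially the same route as the paper: both cover $\troa$ by the principal ideals $\gen{m}=\overline{\troa\troa^*m\troa^*\troa}=\overline{AmB}$ and use commutativity of $A$ and $B$ to collapse $AmB$ to $Am$ and $mB$, so that $m$ itself is associated to $\gen{m}$. The only difference is presentational — you verify conditions \eqref{lem:ElementsAssociatedToTRO:item:M*u=M*M} and \eqref{lem:ElementsAssociatedToTRO:item:uM*=MM*} of Lemma~\ref{lem:ElementsAssociatedToTRO} directly, while the paper records the identities $\gen{m}=Am=mB$ and reads off regularity from them.
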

\begin{proof}
Given $m\in \troa$, we show that the ideal $\gen{m}\sbe\troa$ generated by $m$,
namely $\gen{m}\defeq\troa\troa^*m\troa^*\troa$ (closed linear span) is regular.
Since $B=\troa^*\troa$ is commutative, we have
$$\gen{m}=AmB=\troa(\troa^*m)(\troa^*\troa)=\troa(\troa^*\troa)(\troa^*m)=\troa\troa^*m=Am.$$
Similarly, since $A=\troa\troa^*$ is commutative, we have $\gen{m}=mB$. Since $m\in \gen{m}$, this implies that $\gen{m}$ is regular.
\end{proof}

Although we have chosen to work with {\tro} concretely represented in Hilbert spaces, there is an abstract approach free of representations.
In fact, note that a {\tro} $\troa\sbe\bound(\hils)$ may be viewed as an imprimitivity Hilbert $A,B$-bimodule, where $A=\troa\troa^*$ and
$B=\troa^*\troa$ and the Hilbert bimodule structure (left $A$-action, right $B$-action and inner products) is given by the operations in $\bound(\hils)$. Conversely, any imprimitivity Hilbert $A,B$-bimodule $\F$ is isomorphic to some {\tro} $\troa\sbe\bound(\hils)$ (viewed as a bimodule). We refer the reader to \cite{Echterhoff.et.al.Categorical.Imprimitivity}
for more details on Hilbert modules.
Given Hilbert $B$\nb-modules $\F_1$ and $\F_2$,
we write $\K(\F_1,\F_2)$ and $\Ls(\F_1,\F_2)$ for the spaces of compact and adjointable operators $\F_1\to \F_2$, respectively.

The \emph{multiplier bimodule} of $\F$ is defined as $\mult(\F)\defeq\Ls(B,\F)$. This is, indeed, a Hilbert $\mult(A),\mult(B)$-bimodule with respect
to the canonical structure (see \cite[Section~1.5]{Echterhoff.et.al.Categorical.Imprimitivity} for details).

The notion of (local) regularity defined previously may be translated to the setting of Hilbert bimodules as follows.

\begin{definition}\label{def:RegularBimodule}
Let $\F$ be an imprimitivity Hilbert $A,B$-bimodule. We say that $\F$ \emph{regular} if there is a unitary multiplier
$u\in \mult(\F)$, that is, an adjointable operator $u\colon B\to \F$ such that $u^*u=1_{B}$ and $uu^*=1_{A}$ (here we tacitly identify
$\mult(A)\cong\Ls(\F)\cong\mult(\K(\F))$ in the usual way). We say that $\F$ is \emph{locally regular} if it is generated
by regular sub-$A,B$-bimodules, that is, if there is a family $\{\F_i\}_{i\in I}$ of sub-$A,B$-bimodules $\F_i\sbe\F$ such that
each $\F_i$ is is regular as an imprimitivity $A_i,B_i$-bimodule,
where $A_i=\cspn{_A}\braket{\F_i}{\F_i}$ and $B_i=\cspn\braket{\F_i}{\F_i}_B$, and
\begin{equation*}
\F=\overline{\sum\limits_{i\in I}\F_i}.
\end{equation*}
\end{definition}

If a (locally) regular Hilbert bimodule is concretely represented as a {\tro} on some Hilbert space, then this {\tro} is also
(locally) regular. Conversely, if a (locally) regular {\tro} is viewed as a Hilbert bimodule, this Hilbert bimodule is (locally) regular.
Of course, every result on (local) regularity of ternary rings of operators can be translated into an equivalent result on (local) regularity of
imprimitivity bimodules. For instance, a Hilbert sub-$A,B$-bimodule of a regular imprimitivity Hilbert $A,B$-bimodule $\F$ is again regular, and
if $A$ and $B$ are commutative, then $\F$ is locally regular.

We end this section discussing some examples. Given a \cstar{}algebra $B$, we may consider the \emph{trivial} $B,B$-imprimitivity bimodule $\F=B$
with the obvious structure. It is easy to see that it is regular. More generally, given \cstar{}algebras $A$ and $B$ and an isomorphism
$\phi\colon A\to B$, we may give $\F=B$ the structure of an imprimitivity Hilbert $A,B$-bimodule with the canonical structure:
\begin{align*}
a\cdot \xi & =\phi(a)b,\quad  \mbox{ (product in $B$) },\quad _A\braket{\xi}{\eta}=\phi\inv(\xi\eta^*)\quad\mbox{and}\\
\xi\cdot b & =\xi b\quad  \mbox{and}\quad  \braket{\xi}{\eta}_B=\xi^*\eta \mbox{ (product and involution of $B$) }
\end{align*}
for all $a\in A$ and $\xi,\eta,b\in B$. We write $_\phi B$ for this $A,B$-bimodule.
The associated multiplier $\mult(A),\mult(B)$-bimodule is just $\mult(\F)=\mult(B)$,
the multiplier algebra of $B$ viewed as a $\mult(A),\mult(B)$-bimodule using the (unique) strictly continuous extension
$\bar\phi\colon\mult(A)\to\mult(B)$ of $\phi$. In other words, $\mult(_\phi B)=_{\bar\phi}\mult(B)$. From this, we
see that $_\phi B$ is regular. In fact, it is enough to take a unitary multiplier of $B$ (for instance, one may take the unit $1_B\in \mult(B)$)
and view it as a unitary multiplier of $_\phi B$. Moreover, up to isomorphism, every regular $A,B$-bimodule has the
form $_\phi B$ for some isomorphism $\phi\colon A\to B$:

\begin{proposition}\label{prop:Regular=>ABisomorphicAndBimoduleTrivial}
An imprimitivity $A,B$-bimodule $\F$ is regular if and only if there is an isomorphism
$\phi\colon A\to B$ such that $\F\cong _\phi B$ as $A,B$-bimodules.
\end{proposition}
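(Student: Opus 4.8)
The plan is to dispose of the ``if'' direction in one line and put all the work into the ``only if'' direction. Since an isomorphism of imprimitivity bimodules carries a unitary multiplier to a unitary multiplier, regularity is invariant under bimodule isomorphism; and the paragraph preceding the statement already exhibits a unitary multiplier of ${}_\phi B$ (for instance $1_B$). Hence any $\F\cong {}_\phi B$ is regular, and the entire content is to manufacture, from an abstractly regular $\F$, both an isomorphism $\phi\colon A\to B$ and a bimodule isomorphism $\F\cong {}_\phi B$.

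For this I would first pass to a concrete model, realizing $\F$ as a {\tro} $\troa\sbe\bound(\hils)$ with $A=\troa\troa^*$ and $B=\troa^*\troa$; this is legitimate because regularity of the bimodule is equivalent to regularity of the associated {\tro}. Corollary~\ref{cor:RegularIFFPartialIsometry} then supplies a partial isometry $u$ with $u\troa^*\troa=\troa$, $\troa\troa^*u=\troa$, $u^*u=1_B$ and $uu^*=1_A$, and tells us that
\begin{equation*}
\phi\colon a\in A\mapsto u^*au\in B
\end{equation*}
is a \Star{}isomorphism; its inverse is readily checked to be $b\mapsto ubu^*$, using $uu^*=1_A$. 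This is the required $\phi$.

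It then remains to build the bimodule isomorphism, for which I would define $\Psi\colon\troa\to B$ by $\Psi(m)=u^*m$. Every verification is driven by two elementary facts: $uu^*$ acts as the identity on $\troa$ from the left (it is the unit multiplier $1_{\troa\troa^*}$ and $\troa$ is a nondegenerate left $\troa\troa^*$\nobreakdash-module), and $u^*u=1_B$. With these, $\Psi$ is a linear bijection of $\troa$ onto $u^*\troa=B$ with inverse $b\mapsto ub$, and the four compatibilities are each one-line computations: right $B$\nobreakdash-linearity, $\Psi(mb)=u^*mb=\Psi(m)b$; preservation of the right inner product, $(u^*m)^*(u^*n)=m^*uu^*n=m^*n$; intertwining of the left action, $\Psi(am)=u^*am=(u^*au)(u^*m)=\phi(a)\Psi(m)$; and preservation of the left inner product, $\phi\inv\big((u^*m)(u^*n)^*\big)=u\,u^*mn^*u\,u^*=mn^*$, which matches ${}_A\braket{m}{n}=mn^*$ in $\troa$.

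I expect no serious obstacle: the decisive step was already carried out in Corollary~\ref{cor:RegularIFFPartialIsometry}, which converts the abstract unitary multiplier into an honest partial isometry obeying the unit relations $u^*u=1_B$ and $uu^*=1_A$. The only point demanding care is the bookkeeping of \emph{where} each unit multiplier acts, so that identities such as $uu^*n=n$ and $mn^*uu^*=mn^*$ are applied on the correct side; working inside a {\tro} representation, where all products are genuine operator products in $\bound(\hils)$, keeps this entirely transparent and avoids having to rephrase the manipulations inside $\Ls(B,\F)$ and $\K(\F)\cong A$ in the abstract picture.
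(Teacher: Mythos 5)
Your proposal is correct and follows essentially the same route as the paper: both extract the unitary multiplier $u$, define $\phi(a)=u^*au$, and identify $\F$ with $B$ via (left multiplication by) $u$ — the paper states the isomorphism as $b\mapsto ub$ from $B$ to $\F$, which is exactly the inverse of your $\Psi(m)=u^*m$. Your version merely spells out, in the concrete {\tro} picture, the verifications the paper leaves as ``easy to see.''
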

\begin{proof}
Let $u$ be a unitary multiplier of $\F$ such that $\F=A\cdot u=u\cdot B$. Then it easy to see that
the map $\phi\colon A\to B$ defined by $\phi(a)=u^*au$ is an isomorphism of \cstar{}algebras.
Moreover, the map $b\mapsto ub$ from $B$ to $\F$ induces an isomorphism $_\phi B\congto\F$ of $A,B$-bimodules.
\end{proof}

The above result says that the structure of a regular $A,B$-bimodule is, up to isomorphism, essentially trivial.
Notice that the imprimitivity $A,B$-bimodule $_\phi B$ (induced from an isomorphism $\phi\colon A\to B$) is isomorphic to
the trivial imprimitivity $B,B$-bimodule $B=_{\id}\!\!B$ (induced from the identity map $\id\colon B\to B$).
More precisely, the identity map $\id\colon B\to B$ together with the coefficient homomorphisms $\phi\colon A\to B$ and $\id\colon B\to B$
defines an isomorphism $_\phi\id_\id$ from the $A,B$-bimodule $_AB_B=_\phi B$ to the $B,B$-bimodule $_\id B={_B}B_B$
(see \cite[Definition~1.16]{Echterhoff.et.al.Categorical.Imprimitivity} for the precise meaning of homomorphisms between bimodules with possibly different coefficient algebras).

From the above characterization of regular bimodules, we can also easily give examples of non-regular bimodules,
simply taking any imprimitivity $A,B$-bimodule for which $A$ and $B$ are non-isomorphic \cstar{}algebras.
For instance, one can take a Hilbert space $\hils$ and view it as an imprimitivity $\K(\hils),\C$-bimodule.
Then $\hils$ is regular if and only if it is one-dimensional.

There is one special case where regularity is always present: this is the case where the coefficient algebras are \emph{stable}.
Recall that a \cstar{}algebra $A$ is stable if $A\otimes\K\cong A$, where $\K=\K(l^2\N)$. If $\F$ is an imprimitivity
Hilbert $A,B$-bimodule with $A$ and $B$ separable (or, more generally, $\sigma$-unital) stable \cstar{}algebras,
then $\F$ is regular (this follows from \cite[Theorem~3.4]{BGR:MoritaEquivalence}).
Hence, after stabilization (tensoring with $\K$), we can make any separable (or, more generally, countably generated)
imprimitivity Hilbert bimodule into a regular one.

Let us now consider the case where $A=B=\contz(X)$ is a commutative \cstar{}algebra,
where $X$ is some locally compact Hausdorff space. It is well-known that imprimitivity $\contz(X),\contz(X)$-bimodules correspond bijectively
to (Hermitian) complex line bundles over $X$ (see \cite[Appendix~A]{Raeburn:PicardGroup}):
given a complex line bundle $L$ over $X$, it can be endowed with
an Hermitian structure, that is, there is a continuous family of inner products $\braket{\cdot}{\cdot}_x$ (which we assume to be linear on the
second variable) on the fibers $L_x$. Then the space $\contz(L)$ of continuous sections of $L$ vanishing at infinity has a canonical structure of an
imprimitivity $\contz(X),\contz(X)$-bimodule with the obvious actions of $\contz(X)$ and the inner products:
\begin{equation*}
_{\contz(X)}\braket{\xi}{\eta}(x)=\braket{\eta(x)}{\xi(x)}_x\quad\mbox{and}\quad\braket{\xi}{\eta}_{\contz(X)}(x)=\braket{\xi(x)}{\eta(x)}_x.
\end{equation*}
When is $\contz(L)$ regular? To answer this question, let us first observe that the associated multiplier $\contb(X),\contb(X)$-bimodule
is (isomorphic to) $\contb(L)$, the space of continuous bounded sections of $L$ endowed with the canonical bimodule structure as above.
Thus $\contz(L)$ is regular if and only if there is a unitary element $u\in \U\contb(L)$, that is,
a continuous unitary section for $L$. In other words, we have proved the following:

\begin{proposition}\label{prop:Contz(L)RegularIFFLTopologicallyTrivial}
The Hilbert $\contz(X),\contz(X)$-bimodule $\contz(L)$ described above is regular if and only if the line bundle $L$ is topologically trivial.
\end{proposition}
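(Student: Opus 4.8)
The plan is to reduce the statement to the classical fact that a complex line bundle is topologically trivial precisely when it admits a nowhere-vanishing continuous global section, using the identification of the multiplier bimodule already recorded just above the proposition.

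First I would unwind the definition of regularity. By Definition~\ref{def:RegularBimodule}, $\F=\contz(L)$ is regular if and only if its multiplier bimodule carries a unitary multiplier $u$ with $u^*u=1_{\contz(X)}$ and $uu^*=1_{\contz(X)}$. Since the multiplier $\contb(X),\contb(X)$-bimodule of $\contz(L)$ has been identified above with $\contb(L)$, such a $u$ is exactly a bounded continuous section of $L$ whose pointwise inner products satisfy $\braket{u(x)}{u(x)}_x=1$ for every $x\in X$; call this a \emph{unit section}. Because the fibers $L_x$ are one-dimensional, the left and right inner products of $u$ with itself both compute to $\braket{u(x)}{u(x)}_x$, so the two conditions $u^*u=1_{\contz(X)}$ and $uu^*=1_{\contz(X)}$ collapse to this single pointwise normalization. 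Hence it suffices to prove that $L$ admits a continuous unit section if and only if $L$ is topologically trivial.

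For the easy direction, suppose $L$ is topologically trivial, so that $L\cong X\times\C$ as a line bundle. The constant section $x\mapsto(x,1)$ is then a nowhere-vanishing continuous section $s$, and the function $x\mapsto\norm{s(x)}_x$ is continuous and strictly positive; dividing, $u(x)\defeq s(x)/\norm{s(x)}_x$ is a continuous unit section.

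For the converse, a unit section $u$ is in particular nowhere vanishing, and since each fiber is one-dimensional the vector $u(x)$ spans $L_x$. Hence the map $\Phi\colon X\times\C\to L$, $(x,\lambda)\mapsto\lambda\,u(x)$, is a continuous bundle morphism which is a linear isomorphism on each fiber. The point requiring care is the continuity of $\Phi\inv$: working in a local trivialization, where $u$ appears as a nowhere-vanishing continuous scalar function, $\Phi$ is multiplication by that function and $\Phi\inv$ is division by it, which is again continuous precisely because $L$ is locally trivial. Thus $\Phi$ is an isomorphism of line bundles and $L$ is topologically trivial.
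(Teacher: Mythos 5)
Your proof is correct and follows essentially the same route as the paper, which establishes the proposition in the discussion immediately preceding it: identify the multiplier bimodule of $\contz(L)$ with $\contb(L)$, observe that regularity amounts to the existence of a continuous unitary section of $L$, and invoke the classical equivalence of this with topological triviality. You simply spell out the last (classical) step, which the paper leaves implicit.
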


The above result illustrates again the already mentioned triviality restriction imposed by regularity.
From this point of view, the study of regular bimodules seems to be trivial. However, the theory becomes interesting when one
studies bundles of such bimodules. In this work we are interested in regular Fell bundles (over inverse semigroups), meaning
Fell bundles for which all the fibers are regular bimodules. Although each fiber is then isolatedly isomorphic to a trivial bimodule,
this isomorphism is not canonical, and the relation between these isomorphisms among different fibers is what makes the object interesting.

On the other hand, local regularity is much more flexible. For instance, the bimodule $\contz(L)$ is always locally regular.
In fact, notice that we already know this from Proposition~\ref{prop:commutative=>loc.regular},
but essentially this is a manifestation of the fact that $L$ is locally trivial. It is also easy to give examples of non-locally trivial
bimodules. It is enough to take an imprimitivity Hilbert $A,B$-bimodule $\F$ for which $A$ and $B$ are non-isomorphic simple \cstar{}algebras.
In this case $\F$ is \emph{simple}, that is, there is no Hilbert sub-$A,B$-bimodule of $\F$, except for the trivial ones $\{0\}$ and $\F$.
Hence $\F$ is locally regular if and only if it is regular, and in this case $A$ and $B$ are isomorphic by Proposition~\ref{prop:Regular=>ABisomorphicAndBimoduleTrivial}.
As a simple example, consider any Hilbert space $\hils$ and view it as an imprimitivity $\K(\hils),\C$-bimodule.
This is not locally trivial, unless $\hils$ is one-dimensional.

\section{Regular Fell bundles}
\label{sec:regular Fell bundles}

First, let us recall the definition of Fell bundles over inverse semigroups
(a concept first introduced by Nándor Sieben in~\cite{SiebenFellBundles} and later used by the
second author in~\cite{Exel:noncomm.cartan}).

\begin{definition}\label{def:Fell bundles over ISG}
Let $S$ be an inverse semigroup. A \emph{Fell bundle} over $S$ is a
collection $\A=\{\A_s\}_{s\in S}$ of Banach spaces $\A_s$
together with a \emph{multiplication} $\cdot\colon\A\times\A\to \A$, an
\emph{involution} $^{*}\colon\A\to \A$, and isometric linear maps
$j_{t,s}\colon\A_s\to \A_t$ whenever $s\leq t$, satisfying the following properties\textup:
\begin{enumerate}[(i)]
\item $\A_s\cdot\A_t\sbe\A_{st}$ and the multiplication
is bilinear from $\A_s\times\A_t$ to $\A_{st}$ for all $s,t\in S$;\label{def:Fell bundles over ISG:item:MultiplicationBilinear}
\item the multiplication is associative, that is, $a\cdot(b\cdot c)=(a\cdot b)\cdot c$
    for all $a,b,c\in \A$;\label{def:Fell bundles over ISG:item:MultiplicationAssociative}
\item $\|a\cdot b\|\leq \|a\|\|b\|$ for all $a,b\in \A$;\label{def:Fell bundles over ISG:item:MultiplicationBounded}
\item $\A_s^*\sbe\A_{s^*}$ and the involution is conjugate linear
        from $\A_s$ to $\A_{s^*}$;\label{def:Fell bundles over ISG:item:InvolutionConjugateLinear}
\item $(a^*)^*=a$, $\|a^*\|=\|a\|$ and
        $(a\cdot b)^*=b^*\cdot a^*$;\label{def:Fell bundles over ISG:item:InvolutionIdempotentIsometricAntiMultiplicative}
\item $\|a^*a\|=\|a\|^2$ and $a^*a$ is a positive element of the \cstar{algebra} $\A_{s^*s}$
        for all $s\in S$ and $a\in \A_s$;\label{def:Fell bundles over ISG:item:C*-Condition}
\item if $r\leq s\leq t$ in $S$, then $j_{t,r}=j_{t,s}\circ j_{s,r}$;\label{def:Fell bundles over ISG:item:CompatibilityInclusions}
\item if $s\leq t$ and $u\leq v$ in $S$, then $j_{t,s}(a)\cdot j_{v,u}(b)=j_{tv,su}(a\cdot b)$
                        for all $a\in \A_s$ and $b\in \A_u$. In other words, the following diagram commutes:
\begin{equation*}
\xymatrix{
\A_s\times\A_u \ar[r]^{\mu_{s,u}}\ar[d]_{j_{t,s}\times j_{v,u}} & \A_{su}\ar[d]^{j_{tv,su}}\\
\A_t\times\A_v \ar[r]_{\mu_{t,v}} & \A_{tv}}
\end{equation*}
where $\mu_{s,u}$ and $\mu_{t,v}$
denote the multiplication maps.\label{def:Fell bundles over ISG:item:CompatibilityMultiplicationWithInclusions}
\item if $s\leq t$ in $S$, then $j_{t,s}(a)^*=j_{t^*,s^*}(a^*)$ for all $a\in \A_s$, that is, the diagram
\begin{equation*}
\xymatrix{
\A_s\ar[r]^{^{*}}\ar[d]_{j_{t,s}} & \A_{s^*}\ar[d]^{j_{t^*,s^*}}\\
\A_t\ar[r]_{^{*}} & \A_{t^*}
}
\end{equation*}
commutes.\label{def:Fell bundles over ISG:item:CompatibilityInvolutionWithInclusions}

If $\A_s\cdot\A_t$ spans a dense subspace of $\A_{st}$ for all $s,t\in S$, we say that $\A$ is \emph{saturated}.
\end{enumerate}
\end{definition}

Later, we shall need the following technical result:

\begin{lemma}\label{lem:TechnicalLemmaDefFellBundle}
Let $S$ be an inverse semigroup and let $\A=\{\A_s\}_{s\in S}$ be a collection of Banach spaces satisfying
all the conditions of Definition~\ref{def:Fell bundles over ISG} except,
possibly for axiom~\eqref{def:Fell bundles over ISG:item:CompatibilityMultiplicationWithInclusions}. Then the following
assertions are equivalent:
\begin{enumerate}[(a)]
\item axiom~\eqref{def:Fell bundles over ISG:item:CompatibilityMultiplicationWithInclusions}
                of Definition~\ref{def:Fell bundles over ISG}
                holds (and hence $\A$ is a Fell bundle);\label{lem:TechnicalLemmaDefFellBundle:item:CompatibilityMultiplicationWithInclusions}
\item $j_{t,s}(a)\cdot b=j_{tu,su}(a\cdot b)$ whenever $s,t,u\in S$, $s\leq t$, $a\in \A_s$ and $b\in \A_u$
        (that is, axiom~\eqref{def:Fell bundles over ISG:item:CompatibilityMultiplicationWithInclusions}
        holds for $u=v$);\label{lem:TechnicalLemmaDefFellBundle:item:CompatibilityMultiplicationWithInclusionsForu=v}
\item $a\cdot j_{v,u}(b)=j_{sv,su}(a\cdot b)$ whenever $s,u,v\in S$, $u\leq v$, $a\in \A_s$ and $b\in \A_u$ (that is,
        axiom~\eqref{def:Fell bundles over ISG:item:CompatibilityMultiplicationWithInclusions} holds
        for $s=t$);\label{lem:TechnicalLemmaDefFellBundle:item:CompatibilityMultiplicationWithInclusionsFors=t}
\end{enumerate}
\end{lemma}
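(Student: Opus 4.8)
The plan is to treat the two easy specializations first and then close the cycle through the involution. A useful preliminary remark is that each $j_{s,s}$ is the identity on $\A_s$: putting $r=s=t$ in axiom~\eqref{def:Fell bundles over ISG:item:CompatibilityInclusions} gives $j_{s,s}=j_{s,s}\circ j_{s,s}$, and an idempotent map that is isometric (hence injective) must be $\id_{\A_s}$. With this remark, the implications from (a) are immediate: setting $v=u$ in axiom~\eqref{def:Fell bundles over ISG:item:CompatibilityMultiplicationWithInclusions} makes $j_{v,u}=\id$ and yields (b), while setting $t=s$ makes $j_{t,s}=\id$ and yields (c).

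Next I would prove the equivalence of (b) and (c) by means of the involution. Assuming (b), to obtain (c) for $a\in\A_s$, $b\in\A_u$ with $u\le v$, I take the adjoint of $a\cdot j_{v,u}(b)$: by the anti-multiplicativity of the involution~\eqref{def:Fell bundles over ISG:item:InvolutionIdempotentIsometricAntiMultiplicative} and its compatibility with the inclusions~\eqref{def:Fell bundles over ISG:item:CompatibilityInvolutionWithInclusions} this equals $\big(j_{v^*,u^*}(b^*)\cdot a^*\big)^*$, using that the natural partial order is preserved by the involution, so $u\le v$ forces $u^*\le v^*$. Now (b) rewrites the inner product as $j_{v^*s^*,\,u^*s^*}(b^*a^*)=j_{(sv)^*,\,(su)^*}\big((ab)^*\big)$, and applying~\eqref{def:Fell bundles over ISG:item:CompatibilityInvolutionWithInclusions} once more after taking adjoints returns $j_{sv,su}(ab)$, which is exactly (c). The reverse implication (c)$\Rightarrow$(b) is entirely symmetric. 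This is the step I expect to be the main obstacle, not because it is deep but because it is the most error-prone: one must track carefully how the involution reverses the order of products and how axiom~\eqref{def:Fell bundles over ISG:item:CompatibilityInvolutionWithInclusions} intertwines the maps $j_{t,s}$ with $^*$, together with the identities $(sv)^*=v^*s^*$ and $(su)^*=u^*s^*$.

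Finally I would close the cycle by showing that (b) and (c) together imply (a). Given $s\le t$ and $u\le v$, I apply (b) to $j_{t,s}(a)\cdot j_{v,u}(b)$, whose right factor lies in $\A_v$, to get $j_{tv,sv}\big(a\cdot j_{v,u}(b)\big)$; then (c) rewrites the inner product, giving $j_{tv,sv}\big(j_{sv,su}(ab)\big)$. Here I would invoke compatibility of the natural partial order with multiplication, so that $u\le v$ gives $su\le sv$ and $s\le t$ gives $sv\le tv$; thus $su\le sv\le tv$, and axiom~\eqref{def:Fell bundles over ISG:item:CompatibilityInclusions} collapses $j_{tv,sv}\circ j_{sv,su}$ to $j_{tv,su}$, producing $j_{tv,su}(ab)$ as required. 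Since (b) and (c) are already known to be equivalent, assuming (b) alone supplies both hypotheses, so this chain yields (b)$\Rightarrow$(a) and completes the circle of equivalences.
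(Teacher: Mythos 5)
Your proposal is correct and follows essentially the same route as the paper: (a) trivially gives (b) and (c) since $j_{s,s}=\id$, (b) and (c) are interchanged by the involution via axioms~\eqref{def:Fell bundles over ISG:item:InvolutionIdempotentIsometricAntiMultiplicative} and~\eqref{def:Fell bundles over ISG:item:CompatibilityInvolutionWithInclusions}, and (b) together with (c) yields (a) by composing the two one-sided identities and collapsing $j$'s via axiom~\eqref{def:Fell bundles over ISG:item:CompatibilityInclusions}. The only (immaterial) difference is that in the last step you pass through the intermediate fiber $\A_{sv}$ whereas the paper glues its diagrams through $\A_{tu}$.
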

\begin{proof}
Of course,~\eqref{lem:TechnicalLemmaDefFellBundle:item:CompatibilityMultiplicationWithInclusions}
implies~\eqref{lem:TechnicalLemmaDefFellBundle:item:CompatibilityMultiplicationWithInclusionsForu=v}
and~\eqref{lem:TechnicalLemmaDefFellBundle:item:CompatibilityMultiplicationWithInclusionsFors=t}
(note that $j_{s,s}$ is the identity map for all $s\in S$).
Applying the involution and using axioms~\eqref{def:Fell bundles over ISG:item:InvolutionIdempotentIsometricAntiMultiplicative}
and~\eqref{def:Fell bundles over ISG:item:CompatibilityInvolutionWithInclusions} of Definition~\ref{def:Fell bundles over ISG},
it follows that~\eqref{lem:TechnicalLemmaDefFellBundle:item:CompatibilityMultiplicationWithInclusionsForu=v}
and~\eqref{lem:TechnicalLemmaDefFellBundle:item:CompatibilityMultiplicationWithInclusionsFors=t} are equivalent.
Finally, suppose that ~\eqref{lem:TechnicalLemmaDefFellBundle:item:CompatibilityMultiplicationWithInclusionsForu=v}
and (hence also)~\eqref{lem:TechnicalLemmaDefFellBundle:item:CompatibilityMultiplicationWithInclusionsFors=t} hold.
This means that the diagrams
\begin{equation*}
\xymatrix{
\A_s\times\A_u \ar[r]^{\mu_{s,u}}\ar[d]_{j_{t,s}\times \id} & \A_{su}\ar[d]^{j_{tu,su}}\\
\A_t\times\A_u \ar[r]_{\mu_{t,u}} & \A_{tu}}
\quad\quad
\xymatrix{
\A_t\times\A_u \ar[r]^{\mu_{t,u}}\ar[d]_{\id\times j_{v,u}} & \A_{tu}\ar[d]^{j_{tv,tu}}\\
\A_t\times\A_v \ar[r]_{\mu_{t,v}} & \A_{tv}}
\end{equation*}
commute for all $s,t,u,v\in S$ with $s\leq t$ and $u\leq v$. Gluing these diagrams, we get the commutative diagram
\begin{equation*}
\xymatrix{
\A_s\times\A_u \ar[r]^{\mu_{s,u}}\ar[d]_{j_{t,s}\times \id} & \A_{su}\ar[d]^{j_{tu,su}}\\
\A_t\times\A_u \ar[r]_{\mu_{t,u}}\ar[d]_{\id\times j_{v,u}} & \A_{tu}\ar[d]^{j_{tv,tu}}\\
\A_t\times\A_v \ar[r]_{\mu_{t,v}} & \A_{tv}}
\end{equation*}
Since $(\id\times j_{v,u})\circ (j_{t,s}\times \id)=j_{t,s}\times j_{v,u}$ and
$j_{tv,tu}\circ j_{tu,su}=j_{tv,su}$ (see axiom~\eqref{def:Fell bundles over ISG:item:CompatibilityInclusions}
in Definition~\ref{def:Fell bundles over ISG}), this yields the desired commutativity of the diagram
in Definition~\ref{def:Fell bundles over ISG}\eqref{def:Fell bundles over ISG:item:CompatibilityMultiplicationWithInclusions}.
\end{proof}

We shall say that a Fell bundle $\A=\{\A_s\}_{s\in S}$ is \emph{concrete} if all the Banach spaces
$\A_s$ are concretely represented as operators on some Hilbert space $\hils$ in such a way that
all the algebraic operations of $\A$ are realized by the operations in $\bound(\hils)$ and, in addition, the inclusion maps
$A_s\into\A_t$ for $s\leq t$ become real inclusions $\A_s\sbe\A_t$ in $\bound(\hils)$.
In other words, $\A$ is a concrete Fell bundle in $\bound(\hils)$ if $\A_s\sbe\bound(\hils)$ for all $s\in S$ and
the inclusion map $\A\to \bound(\hils)$ defines a \emph{representation} of $\A$ (see
Section~\ref{sec:RepresentationsCrossedProducts} below and \cite{Exel:noncomm.cartan}
for more details on representations of Fell bundles). Note that, in this case, each fiber $\A_s\sbe\bound(\hils)$ is a {\tro}.

Observe that every Fell bundle is isomorphic to some concrete Fell bundle.
In fact, it is enough to take a faithful representation of $C^*(\A)$ on some Hilbert space $\hils$ and
consider the universal representation $\pi^u$ of $\A$ into $C^*(\A)\sbe\bound(\hils)$. By Corollary~8.9 in \cite{Exel:noncomm.cartan},
the components $\pi^u_s\colon\A_s\to\bound(\hils)$ of $\pi^u$ are injective. It follows that the collection $\{\pi^u_s(\A_s)\}_{s\in S}$ is a concrete Fell bundle in $\bound(\hils)$ which is isomorphic to $\A$. Thus there is no loss of generality in assuming that a Fell bundle is concrete.

\begin{definition}
Let $S$ be an inverse semigroup, and let $\A=\{\A_s\}_{s\in S}$ be a (concrete) Fell bundle.
We say that $\A$ is \emph{(locally) regular} if each fiber $\A_s\sbe\bound(\hils)$ is (locally) regular as an
imprimitivity Hilbert $I_s,J_s$-bimodule (or, equivalently, as a {\tro} if $\A$ is concrete),
where $I_s=\A_s\A_s^*$ and $J_s=\A_s^*\A_s$ (closed linear spans).
\end{definition}

\begin{remark}
We shall work almost exclusively with saturated Fell bundles in this work.
This is not a strong restriction since we are going to see later (see Proposition~\ref{prop:RefinementForLocRegularFellBundle}) that there is a way to
replace $\A$ by another Fell bundle $\B$ (over a different inverse semigroup) such that $\B$ is saturated and $\A$ and $\B$ have isomorphic
cross-sectional \cstar{}algebras. Note that for a saturated Fell bundle $\A$,
we have $\A_s\A_s^*=\A_{ss^*}$ and $\A_s^*\A_s=\A_{s^*s}$.
\end{remark}

\begin{example}
Consider the inverse semigroup $S=\{s,s^*,s^*s,ss^*,0\}$ with five elements,
where $s^2=0$ and $0$ is a zero for $S$.
A Fell bundle $\A$ over $S$ is essentially the same as a Hilbert bimodule. In fact, if $\F$ is a Hilbert $A,B$-module, we define
$\A_s=\F$, $\A_{s^*}=\F^*$ (the Hilbert $B,A$-bimodule \emph{dual} of $\F$; see \cite[Example~1.6(3)]{Echterhoff.et.al.Categorical.Imprimitivity}
for the precise definition), $\A_{s^*s}=B$, $\A_{ss^*}=A$ and $\A_0=\{0\}$.
Note that the only inequalities in the canonical order relation of $S$ are $0\leq t$ for all $t\in S$.
Thus the inclusion maps are trivial: they do not play any important role.
Conversely, if $\A$ is a Fell bundle over $S$, then $\F=\A_s$
is a Hilbert $A,B$-bimodule in the natural way, where $A=\A_{ss^*}$ and $B=\A_{s^*s}$.
All the aspects of the Fell bundle $\A$ can be described in terms of the Hilbert bimodule $\F$.
For instance, $\A$ is saturated iff $\F$ is an imprimitivity Hilbert $A,B$-bimodule
and $\A$ is (locally) regular iff $\F$ is.
\end{example}

Let $\A=\{\A_s\}_{s\in S}$ be a regular, saturated, concrete Fell bundle in $\bound(\hils)$. Then each fiber $\A_s$ becomes a
regular {\tro} in $\bound(\hils)$, so that, for each $s\in S$, we can choose a partial isometry $u_s\in\bound(\hils)$
which is strictly associated to $\A_s$. Moreover, if $e\in E(S)$ is idempotent,
then we may choose $u_e$ to be the unit of $\mult(\A_e)$ (the multiplier algebra of $\A_e\sbe\bound(\hils)$).
We are going to denote the unit of $\mult(\A_e)$ by $1_e$ to simplify the notation.

Let $A\defeq C^*(\A)$ be the (full) $C^*$-algebra of $\A$ and let $B$ be the $C^*$-algebra $C^*(\E)$, where $\E=\A|_{E}$ is the restriction of $\A$
to the idempotent semilattice $E=E(S)$ of $S$. We may assume that $B\sbe A\sbe\bound(\hils)$ (we could have taken $\hils$ to be the Hilbert space of a faithful representation of $A$ to start with) and that $\A_s\sbe A$ for all $s\in S$. Note that $\A_e$ is an ideal of $B$ for all $e\in E$.

With the assumptions above, we define the ideals $\D_s\defeq\A_s\A_s^*=\A_{ss^*}$ in $B$, $s\in S$, and the maps
\begin{equation*}
\beta_s\colon\D_{s^*}\to \D_s\quad\mbox{by }\beta_s(a)\defeq u_sau_s^* \mbox{ for all }a\in \D_{s^*}.
\end{equation*}
It is easy to see that $\beta_s$ is well-defined and, since $u_su_s^*$ is the unit of $\mult(\D_{s^*})$, $\beta_s$ is a \Star{}isomorphism.
Given $s,t\in S$, we also define $\omega(s,t)\defeq u_su_tu_{st}^*\in \bound(\hils)$. Note that $\omega(s,t)\in \uni\mult(\D_{st})$ (the set of unitary multipliers). In fact, by \cite[Proposition~6.5]{Exel:twisted.partial.actions} $\omega(s,t)$ is a unitary multiplier of
\begin{equation*}
\A_s\A_t\A_t^*\A_s^*=\A_{stt^*s^*}=\D_{st}.
\end{equation*}

\begin{proposition}\label{prop:properties of twisted action coming from partial isometries}
With the notations above, the following properties hold for all $r,s,t\in S$ and $e,f\in E(S)$\textup:
\begin{enumerate}[(i)]
\item
    the linear span of $\cup_{e\in E}\D_e$ is dense in $B$;
    \label{prop:properties of twisted action coming from partial isometries:item:D_e_Generate_B}
\item
    $\beta_e\colon\D_e\to\D_e$
    is the identity map;\label{prop:properties of twisted action coming from partial isometries:item:beta_e=Identity}
\item
    $\beta_r(\D_{r^*}\cap\D_s)=\D_{rs}$;\label{prop:properties of twisted action coming from partial isometries:item:beta_r(D_r*inter D_s)=D_rs}
\item
    $\beta_r\circ\beta_s=\Ad_{\omega(r,s)}\circ\beta_{rs}$;
    \label{prop:properties of twisted action coming from partial isometries:item:beta_r beta_s=Ad_omega(r,s)beta_rs}
\item
    $\beta_r(x\omega(s,t))\omega(r,st)=\beta_r(x)\omega(r,s)\omega(rs,t)$ whenever $x\in\D_{r^*}\cap\D_{st}$;
    \label{prop:properties of twisted action coming from partial isometries:item:CocycleCondition}
\item
    $\omega(e,f)=1_{ef}$ and $\omega(r,r^*r)=\omega(rr^*,r)=1_{rr^*}$;
    \label{prop:properties of twisted action coming from partial isometries:item:omega(e,f)=1_ef AND and omega(r,r^*r)=omega(rr^*,r)=1_rr*}
\item
    $\omega(s^*,e)\omega(s^*e,s)x=\omega(s^*,s)x$ for all $x\in \D_{s^*es}$;
    \label{prop:properties of twisted action coming from partial isometries:item:omega(s*,e)omega(s*e,s)x=omega(s*,s)x}
\item
    $\omega(t^*,s)=\omega(t^*,ss^*)\omega(s^*,s)$ whenever $s\leq t$;
    \label{prop:properties of twisted action coming from partial isometries:item:omega(t*,s)=omega(t*,ss*)omega(s*,s)}
\item
    $\omega(t,r^*r)=\omega(t,s^*s)\omega(s,r^*r)$ whenever $r\leq s\leq t$.
    \label{prop:properties of twisted action coming from partial isometries:item:omega(t,r^*r)=omega(t,s^*s)omega(s,r^*r)}
\end{enumerate}
\end{proposition}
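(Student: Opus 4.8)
The plan is to verify all nine properties by direct computation using the definitions $u_s \sim \A_s$ (strict association), $\beta_s(a) = u_s a u_s^*$, and $\omega(s,t) = u_s u_t u_{st}^*$, together with the fundamental relations for a partial isometry $u_s$ strictly associated to $\A_s$ provided by Corollary~\ref{cor:RegularIFFPartialIsometry}: namely $u_s^* u_s = 1_{\A_s^* \A_s} = 1_{s^*s}$ and $u_s u_s^* = 1_{\A_s \A_s^*} = 1_{ss^*}$, plus the defining equalities $u_s \A_s^* \A_s = \A_s$ and $\A_s \A_s^* u_s = \A_s$. Since the bundle is saturated, $\A_s \A_s^* = \A_{ss^*}$ and $\A_s^* \A_s = \A_{s^*s}$, so all the idempotent-fiber algebras $\D_e$ are the expected ones.

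I would begin with the easy structural facts. Property~\eqref{prop:properties of twisted action coming from partial isometries:item:D_e_Generate_B} follows because $B = C^*(\E)$ is by definition generated by the fibers $\A_e = \D_e$ over idempotents. Property~\eqref{prop:properties of twisted action coming from partial isometries:item:beta_e=Identity} is immediate from the choice $u_e = 1_e$, so $\beta_e(a) = 1_e a 1_e = a$ on $\D_e$. For property~\eqref{prop:properties of twisted action coming from partial isometries:item:beta_r(D_r*inter D_s)=D_rs} I would compute $\beta_r(\D_{r^*} \cap \D_s) = u_r (\D_{r^*r} \cap \D_s) u_r^*$; conjugation by the partial isometry $u_r$ carries the ideal $\D_{r^*r} = \A_r^* \A_r$ onto $\D_{rr^*} = \A_r \A_r^*$, and intersecting with $\D_s$ should translate under $\Ad_{u_r}$ into the fiber over $rs(rs)^* = r s s^* r^*$, i.e.\ $\D_{rs}$; the key is that $u_r$ implements the \Star-isomorphism of Corollary~\ref{cor:RegularIFFPartialIsometry} and respects ideals. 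Properties~\eqref{prop:properties of twisted action coming from partial isometries:item:beta_r beta_s=Ad_omega(r,s)beta_rs} and~\eqref{prop:properties of twisted action coming from partial isometries:item:CocycleCondition} are the two genuinely ``twisted'' axioms. For~\eqref{prop:properties of twisted action coming from partial isometries:item:beta_r beta_s=Ad_omega(r,s)beta_rs} I would expand $\beta_r(\beta_s(a)) = u_r u_s a u_s^* u_r^*$ and insert $u_{rs}^* u_{rs}$ appropriately, writing $u_r u_s = \omega(r,s) u_{rs}$ (which holds on the relevant ideal since $\omega(r,s) = u_r u_s u_{rs}^*$ and $u_{rs} u_{rs}^* = 1_{rs(rs)^*}$ acts as a unit there), so that $u_r u_s a u_s^* u_r^* = \omega(r,s) u_{rs} a u_{rs}^* \omega(r,s)^* = \Ad_{\omega(r,s)}(\beta_{rs}(a))$. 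Property~\eqref{prop:properties of twisted action coming from partial isometries:item:CocycleCondition} follows from the associativity $u_r(u_s u_t) = (u_r u_s)u_t$ after substituting the definition of $\omega$ and using $\beta_r(x) = u_r x u_r^*$; both sides should reduce to $u_r x u_s u_t u_{rst}^*$ up to multipliers, with $x \in \D_{r^*} \cap \D_{st}$ guaranteeing that the truncating projections act as units.

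The remaining properties~\eqref{prop:properties of twisted action coming from partial isometries:item:omega(e,f)=1_ef AND and omega(r,r^*r)=omega(rr^*,r)=1_rr*}--\eqref{prop:properties of twisted action coming from partial isometries:item:omega(t,r^*r)=omega(t,s^*s)omega(s,r^*r)} are normalization and coherence conditions that exploit the special choice $u_e = 1_e$ for idempotents. For~\eqref{prop:properties of twisted action coming from partial isometries:item:omega(e,f)=1_ef AND and omega(r,r^*r)=omega(rr^*,r)=1_rr*}, $\omega(e,f) = u_e u_f u_{ef}^* = 1_e 1_f 1_{ef} = 1_{ef}$ since the $1_e$ commute as multipliers of the commuting ideals in $B$; for $\omega(r, r^*r)$ I would use $u_{r^*r} = 1_{r^*r}$ and $r \cdot r^*r = r$, reducing $\omega(r,r^*r) = u_r 1_{r^*r} u_r^*$ to $u_r u_r^* = 1_{rr^*}$ because $1_{r^*r} = u_r^* u_r$ absorbs into $u_r$; the identity $\omega(rr^*,r) = 1_{rr^*}$ is symmetric. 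Properties~\eqref{prop:properties of twisted action coming from partial isometries:item:omega(s*,e)omega(s*e,s)x=omega(s*,s)x}, \eqref{prop:properties of twisted action coming from partial isometries:item:omega(t*,s)=omega(t*,ss*)omega(s*,s)}, and~\eqref{prop:properties of twisted action coming from partial isometries:item:omega(t,r^*r)=omega(t,s^*s)omega(s,r^*r)} would each be obtained by writing out every $\omega$ as a triple product of $u$'s and cancelling using the partial-isometry relations; for~\eqref{prop:properties of twisted action coming from partial isometries:item:omega(t*,s)=omega(t*,ss*)omega(s*,s)} and~\eqref{prop:properties of twisted action coming from partial isometries:item:omega(t,r^*r)=omega(t,s^*s)omega(s,r^*r)} the order hypotheses $s \le t$ (resp.\ $r \le s \le t$) enter through the inclusion maps $j_{t,s}$ and the compatibility axioms of Definition~\ref{def:Fell bundles over ISG}, which force the partial isometries over comparable elements to agree on the smaller fibers.

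I expect the main obstacle to be the careful bookkeeping of truncating projections: the products $u_r u_s$ and $u_{rs}$ are only partial isometries, so the equation $u_r u_s = \omega(r,s) u_{rs}$ holds only after compressing by the appropriate unit $1_{rs(rs)^*}$, and each identity is valid only on a specified ideal (hence the hypotheses ``$x \in \D_{r^*} \cap \D_{st}$'' and ``$x \in \D_{s^*es}$''). The delicate point is checking in each case that the relevant projection acts as the identity on the stated domain, which is where strict association---via the equalities $u_s^* u_s = 1_{s^*s}$ and $u_s u_s^* = 1_{ss^*}$ from Corollary~\ref{cor:RegularIFFPartialIsometry}---together with saturation is used repeatedly. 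The order-compatibility properties~\eqref{prop:properties of twisted action coming from partial isometries:item:omega(t*,s)=omega(t*,ss*)omega(s*,s)} and~\eqref{prop:properties of twisted action coming from partial isometries:item:omega(t,r^*r)=omega(t,s^*s)omega(s,r^*r)} are likely the subtlest, since they additionally require relating $u_s$ to $u_t$ for $s \le t$ through the inclusion $j_{t,s}$, and one must verify that the chosen partial isometries are genuinely compatible with the bundle's inclusion structure rather than merely associated fiber-by-fiber.
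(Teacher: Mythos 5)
Your overall strategy---direct computation with the relations $u_s^*u_s=1_{s^*s}$, $u_su_s^*=1_{ss^*}$, $u_e=1_e$, and careful tracking of which truncating projections act as units on which ideals---is exactly the paper's approach for items (i)--(vii), and your outlines for those items (including the reduction of both sides of the cocycle identity to $u_rxu_su_tu_{rst}^*$) match the actual proof.

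However, your proposed mechanism for the two order-compatibility identities, namely $\omega(t^*,s)=\omega(t^*,ss^*)\omega(s^*,s)$ for $s\leq t$ and $\omega(t,r^*r)=\omega(t,s^*s)\omega(s,r^*r)$ for $r\leq s\leq t$, contains a genuine error. You assert that the order hypotheses enter ``through the inclusion maps $j_{t,s}$ and the compatibility axioms of Definition~\ref{def:Fell bundles over ISG}, which force the partial isometries over comparable elements to agree on the smaller fibers.'' They do not: the $u_s$ are chosen independently, one for each $s$, subject only to strict association with $\A_s$ and $u_e=1_e$ on idempotents, and no compatibility with the inclusions $\A_r\sbe\A_t$ for $r\leq t$ is imposed or available. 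Indeed, the statement that $u_r$ and $u_s$ agree on $\A_{r^*r}$ whenever $r\leq s$ is precisely condition (iii) of Proposition~\ref{prop:CoherentHomogeneous}, which is there shown to be \emph{equivalent} to Sieben's condition $\omega(s,e)=1_{se}$; and by Proposition~\ref{prop:SiebenTwistedActions=TopologicalTrivial} that condition fails whenever the underlying twist is topologically nontrivial. So if your mechanism were valid, every regular Fell bundle would satisfy Sieben's condition and the central point of the paper would evaporate. The identities in question are in fact proved without comparing $u_s$ to $u_t$ at all: the order hypothesis is used only through semigroup algebra. For $s\leq t$ one has $t^*s=s^*s$ and $ts^*s=s$, so $\omega(t^*,s)=u_{t^*}u_su_{t^*s}^*=u_{t^*}u_s1_{s^*s}=u_{t^*}u_s$ and $\omega(t,s^*s)=u_t1_{s^*s}u_{ts^*s}^*=u_tu_s^*$; every cocycle appearing in (viii) and (ix) thus collapses to a two-letter word in the $u$'s and the units $1_e$, and the identities follow from $u_s^*u_s=1_{s^*s}$, $u_su_s^*=1_{ss^*}$ and $1_e1_f=1_{ef}$ alone. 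You should replace the appeal to inclusion-compatibility by these reductions.
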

\begin{proof}
Property~\eqref{prop:properties of twisted action coming from partial isometries:item:D_e_Generate_B}
is clear because $\D_e=\A_e$ for all $e\in E$. Since $u_e=1_e$ for all $e\in E$,~\eqref{prop:properties of twisted action coming from partial isometries:item:beta_e=Identity}
is also clear. To prove~\eqref{prop:properties of twisted action coming from partial isometries:item:beta_r(D_r*inter D_s)=D_rs}, note that
\begin{multline*}
\beta_r(\D_r^*\cap\D_s)=u_r\D_{r^*}\D_su_r^*=u_r\D_{r^*}\D_s\D_{r^*}u_r^*
\\=u_r\A_{r^*r}\A_{ss^*}\A_{r^*r}u_r^*=\A_r\A_{ss^*}\A_r^*=\A_{rss^*r^*}=\D_{rs}.
\end{multline*}
Note that $1_e1_f=1_{ef}$, so that
\begin{equation*}
\omega(e,f)=u_eu_fu_{ef}^*=1_e1_f1_{ef}^*=1_{ef}.
\end{equation*}
This together with the following calculation proves~\eqref{prop:properties of twisted action coming from partial isometries:item:omega(e,f)=1_ef AND and omega(r,r^*r)=omega(rr^*,r)=1_rr*}:
\begin{multline*}
\omega(r,r^*r)=u_ru_{r^*r}u_{rr^*r}^*=u_r1_{r^*r}u_r^*=u_ru_r^*=1_{rr^*}\\
=1_{rr^*}1_{rr^*}=1_{rr^*}u_ru_r^*=u_{rr^*}u_ru_{rr^*r}^*=\omega(rr^*,r).
\end{multline*}
To prove~\eqref{prop:properties of twisted action coming from partial isometries:item:beta_r beta_s=Ad_omega(r,s)beta_rs},
first note that the domains of $\beta_r\circ\beta_s$ and $\beta_{rs}$ coincide. In fact, by definition,
$\dom(\beta_{rs})=\D_{(rs)^*}=\A_{s^*r^*rs}$ and, on the other hand,
\begin{equation*}
\dom(\beta_r\circ\beta_s)=\{x\in \D_{s^*}\colon \beta_s(x)\in \D_s\cap\D_{r^*}\}=\beta_s\inv(\D_s\cap\D_{r^*}).
\end{equation*}
Now observe that
\begin{multline*}\beta_s\inv(\D_s\cap\D_{r^*})=\beta_s\inv(\D_s\D_{r^*})=u_s^*\A_{ss^*}\A_{r^*r}u_s=\A_{s^*}\A_{r^*r}u_s\\
=\A_{s^*}\A_{ss^*}\A_{r^*r}u_s=\A_{s^*}\A_{r^*r}\A_{ss^*}u_s=\A_{s^*r^*r}\A_s=\A_{s^*r^*rs}=\D_{(rs)^*}.
\end{multline*}
Thus $\dom(\beta_r\circ\beta_s)=\dom(\beta_{rs})=\dom(\Ad_{\omega(r,s)}\circ\beta_{rs})=\D_{(rs)^*}$.
Moreover, since $\omega(r,s)u_{rs}=u_ru_s$, for all $x\in \D_{(rs)^*}$ we get
\begin{equation*}
\beta_r(\beta_s(x))=u_r(u_sxu_s^*)u_r^*=\omega(r,s)u_{rs}xu_{rs}^*\omega(r,s)^*=\omega(r,s)\beta_{rs}(x)\omega(r,s)^*.
\end{equation*}
Item~\eqref{prop:properties of twisted action coming from partial isometries:item:CocycleCondition} is equivalent to the equality
\begin{equation}\label{eq:cocycle identity for partial isometries}
u_rxu_su_tu_{st}^*u_r^*u_ru_{st}u_{rst}^*=u_rxu_r^*u_ru_su_{rs}^*u_{rs}u_tu_{rst}^*.
\end{equation}
The left hand side of this equation equals $u_rxu_su_tu_{st}^*1_{r^*r}u_{st}u_{rst}^*$. Since $re\leq r$, where $e=(st)(st)^*$, we have
\begin{equation*}
u_rxu_su_tu_{st}^*\in u_r\D_{r^*}\D_{st}\mult(\D_{st})=\A_r\D_{st}=\A_{re}\sbe\A_r.
\end{equation*}
Thus
\begin{equation*}
u_rxu_su_tu_{st}^*1_{r^*r}u_{st}u_{rst}^*=u_rxu_su_tu_{st}^*u_{st}u_{rst}^*=u_rxu_su_t1_{(st)^*(st)}u_{rst}^*
\end{equation*}
Now note that
\begin{equation*}
xu_su_t\in \D_{st}u_su_t=A_{st}\A_{t^*}\A_{s^*}u_su_t=\A_{st}\A_{t^*s^*s}u_t\sbe \A_{st}\A_{t^*}u_t=\A_{st}\A_{t^*t}=\A_{st}.
\end{equation*}
Therefore, the left hand side of Equation~\eqref{eq:cocycle identity for partial isometries} equals
\begin{equation*}
u_rxu_su_t1_{(st)^*(st)}u_{rst}^*=u_rxu_su_tu_{rst}^*.
\end{equation*}
Similarly, the right hand side of Equation~\eqref{eq:cocycle identity for partial isometries} equals
\begin{multline*}
u_rxu_r^*u_ru_s1_{(rs)^*(rs)}u_tu_{rst}^*=u_rxu_r^*u_ru_su_tu_{rst}^*\\=u_rx1_{r^*r}u_su_tu_{rst}^*=u_rxu_su_tu_{rst}^*.
\end{multline*}
In order to prove~\eqref{prop:properties of twisted action coming from partial isometries:item:omega(s*,e)omega(s*e,s)x=omega(s*,s)x},
take $s\in S$, $e\in E(S)$ and $x\in \D_{s^*es}$. Note that
\begin{equation*}
u_sx\in  u_s\D_{s^*es}=u_s\A_{s^*es}=u_s\A_{s^*}\A_{es}=\A_{ss^*}\A_{es}=\A_{es}=\A_e\A_s
\end{equation*}
so that $1_eu_sx=u_sx$. Therefore
\begin{multline*}
\omega(s^*,e)\omega(s^*e,s)x=u_{s^*}u_eu_{s^*e}^*u_{s^*e}u_su_{s^*es}^*x=u_{s^*}1_e1_{ess^*}u_s1_{s^*es}x\\
=u_{s^*}1_e1_{ss^*}u_sx=u_{s^*}1_eu_sx=u_{s^*}u_sx=u_{s^*}u_s1_{s^*s}x=\omega(s^*,s)x.
\end{multline*}
To prove~\eqref{prop:properties of twisted action coming from partial isometries:item:omega(t*,s)=omega(t*,ss*)omega(s*,s)},
suppose $s\leq t$. Then $t^*s=s^*s$ and $t^*ss^*=s^*ss^*=s^*$, so that
\begin{multline*}
\omega(t^*,ss^*)\omega(s^*,s)=u_{t^*}u_{ss^*}u_{t^*ss^*}^*u_{s^*}u_su_{s^*s}^*=u_{t^*}1_{ss^*}u_{s^*}^*u_{s^*}u_s 1_{s^*s}=\\
u_{t^*}1_{ss^*}1_{ss^*}u_s1_{s^*s}=u_{t^*}u_{s}=u_{t^*}u_{s}1_{s^*s}^*=u_{t^*}u_{s}u_{s^*s}^*=u_{t^*}u_{s}u_{t^*s}^*=\omega(t^*,s).
\end{multline*}
Finally, to prove~\eqref{prop:properties of twisted action coming from partial isometries:item:omega(t,r^*r)=omega(t,s^*s)omega(s,r^*r)},
assume that $r\leq s\leq t$. Note that $\omega(t,s^*s)=u_tu_{s^*s}u_{ts^*s}^*=u_t1_{s^*s}u_s^*=u_tu_s^*$ and
$1_{s^*s}u_r^*=1_{s^*s}1_{r^*r}u_r^*=1_{r^*r}u_r^*=u_r$. Therefore
\begin{equation*}
\omega(t,s^*s)\omega(s,r^*r)=u_tu_s^*u_su_r^*=u_t1_{s^*s}u_r^*=u_tu_r^*=\omega(t,r^*r).
\end{equation*}
\vskip-16pt
\end{proof}

\section{Twisted actions}
\label{sec:TwistedActions}

In this section, we give a relatively simple definition of inverse semigroup twisted action
generalizing Busby-Smith twisted actions \cite{Busby-Smith:Representations_twisted_group} and closely related to twisted partial actions of
\cite{Exel:twisted.partial.actions} for (discrete) groups.
It also generalizes the twisted actions in the sense of Sieben \cite{SiebenTwistedActions} for (unital) inverse semigroups.
We then extend the main result in \cite{Exel:twisted.partial.actions}
proving that our twisted actions give rise to regular, saturated Fell bundles yielding in this way a
structure classification of such bundles.

\begin{definition}\label{def:twisted action}
A \emph{twisted action} of an inverse semigroup $S$ on a \cstar{algebra} $B$ is
a triple $\big(\{\D_s\}_{s\in S},\{\beta_s\}_{s\in S},\{\omega(s,t)\}_{s,t\in S}\big)$ consisting of a family of
(closed, two-sided) ideals $\D_s$ of $B$ whose linear span is dense in $B$, a family of
\Star{}isomorphisms $\beta_s\colon\D_{s^*}\to \D_{s}$, and a family $\{\omega(s,t)\}_{s,t\in S}$
of unitary multipliers $\omega(s,t)\in \U\mult(\D_{st})$ satisfying properties~\eqref{prop:properties of twisted action coming from partial isometries:item:beta_r beta_s=Ad_omega(r,s)beta_rs}-\eqref{prop:properties of twisted action coming from partial isometries:item:omega(s*,e)omega(s*e,s)x=omega(s*,s)x} of Proposition~\ref{prop:properties of twisted action coming from partial isometries}, that is,
for all $r,s,t\in S$ and $e,f\in E=E(S)$ we have:
\begin{enumerate}[(i)]
\item $\beta_r\circ\beta_s=\Ad_{\omega(r,s)}\circ\beta_{rs}$;\label{def:twisted action:item:beta_r beta_s=Ad_omega(r,s)beta_rs}
\item $\beta_r(x\omega(s,t))\omega(r,st)=\beta_r(x)\omega(r,s)\omega(rs,t)$
        whenever $x\in\D_{r^*}\cap\D_{st}$;\label{def:twisted action:item:CocycleCondition}
\item $\omega(e,f)=1_{ef}$ and $\omega(r,r^*r)=\omega(rr^*,r)=1_{r}$, where $1_r$ is the unit of $\mult(\D_r)$;
        \label{def:twisted action:item:omega(r,r*r)=omega(e,f)=1_ef_and_omega(rr*,r)=1}
\item $\omega(s^*,e)\omega(s^*e,s)x=\omega(s^*,s)x$
for all $x\in \D_{s^*es}$.\label{def:twisted action:item:omega(s*,e)omega(s*e,s)x=omega(s*,s)x}
\end{enumerate}
We sometimes write $(\beta,\omega)$ to refer to a twisted action, implicitly assuming that $\beta$ is a family of \Star{}isomorphisms
$\{\beta_s\}_{s\in S}$ between ideals $\D_{s^*}=\dom(\beta_s)$ and $\D_s=\ran(\beta_s)$, and
$\omega$ is a family $\{\omega(s,t)\}_{s,t\in S}$ of unitary multipliers $\omega(s,t)\in \U\mult(\D_{st})$.
\end{definition}

\begin{remark}
If $S$ has a unit $1$, then the condition in the above definition
that the closed linear span of the ideals $\D_e$ for $e\in E(S)$ is equal to $B$ is equivalent to the requirement $\D_1=B$
because $e\leq 1$ so that $\D_e\sbe\D_1$ for all $e\in E(S)$ (see Lemma~\ref{lem:ConsequencesDefTwistedAction}\eqref{lem:ConsequencesDefTwistedAction:item:D_r sbe D_s} below).

For a discrete group $G$, the only idempotent is the unit element $1\in G$. In this case,
axiom~\eqref{def:twisted action:item:omega(r,r*r)=omega(e,f)=1_ef_and_omega(rr*,r)=1} in the above definition is
equivalent to the condition $\omega(1,s)=\omega(s,1)=1_s$ for all $s\in G$.
Moreover, it is easy to see that this implies axiom~\eqref{def:twisted action:item:omega(s*,e)omega(s*e,s)x=omega(s*,s)x}.
Hence, in the group case, our definition of twisted action is the same
studied by Busby and Smith in \cite{Busby-Smith:Representations_twisted_group} (see also \cite{Packer-Raeburn:Stabilisation}).
It can also be seem as a special case of the twisted partial actions defined by the second named author in \cite{Exel:twisted.partial.actions},
where the condition $\beta_r(\D_{r\inv}\cap\D_s)=\D_r\cap\D_{rs}$ (axiom (b) in \cite[Definition~2.1]{Exel:twisted.partial.actions})
is replaced by $\beta_r(\D_{r\inv}\cap\D_s)=\D_{rs}$ (see Lemma~\ref{lem:ConsequencesDefTwistedAction}\eqref{lem:ConsequencesDefTwistedAction:item:beta_r(D_r* inter D_s)=D_rs} below).
\end{remark}

\begin{remark}
In \cite{SiebenTwistedActions}, Nándor Sieben has
considered a similar definition of twisted action (for unital inverse semigroups) in which the axiom
\begin{equation}\label{eq:SiebensCondition}
\omega(s,t)=1_{st}\quad\mbox{whenever }s\mbox{ or }t\mbox{ is an idempotent}
\end{equation}
is included, but we will see later (Section~\ref{sec:RelationToSiebensTwistedActions})
that this is too strong in general, that is, it can not be derived
from the axioms in Definition~\ref{def:twisted action} and that our definition really generalizes Sieben's one.
In some sense, the axioms~\eqref{def:twisted action:item:omega(r,r*r)=omega(e,f)=1_ef_and_omega(rr*,r)=1} and~\eqref{def:twisted action:item:omega(s*,e)omega(s*e,s)x=omega(s*,s)x} appearing in Definition~\ref{def:twisted action}
are designed to replace~\eqref{eq:SiebensCondition} in a compatible way.

Let us say some words about axiom~\eqref{def:twisted action:item:omega(s*,e)omega(s*e,s)x=omega(s*,s)x} in Definition~\ref{def:twisted action}.
First, note that it can be rewritten as
\begin{equation*}
\omega(s^*,e)\omega(s^*e,s)=\Res_{s^*es}(\omega(s^*,s)),
\end{equation*}
where $\Res_{s^*es}\colon\mult(\D_{s^*s})\to \mult(\D_{s^*es})$ is the restriction homomorphism.
Observe that $\D_{s^*es}$ is an ideal of $\D_{s^*s}$ because $s^*es\leq s^*s$ (see
Lemma~\ref{lem:ConsequencesDefTwistedAction}\eqref{lem:ConsequencesDefTwistedAction:item:D_r sbe D_s} below).
It is not clear for us, whether axiom~\eqref{def:twisted action:item:omega(s*,e)omega(s*e,s)x=omega(s*,s)x} is a
consequence of the others. The closest property we were able to prove is the following equality very similar
to~\eqref{def:twisted action:item:omega(s*,e)omega(s*e,s)x=omega(s*,s)x}:
\begin{equation}\label{eq:omega(e,s^*)omega(es*,s)x=omega(s*,s)x}
\omega(e,s^*)\omega(es^*,s)x=\omega(s^*,s)x\quad\mbox{for all }e\in E(S),\, s\in S\mbox{ and }x\in \D_{es^*s}.
\end{equation}
To prove this, we only need axioms~\eqref{def:twisted action:item:CocycleCondition},~\eqref{def:twisted action:item:omega(r,r*r)=omega(e,f)=1_ef_and_omega(rr*,r)=1} and the fact that $\beta_e$ is the identity on $\D_e$
(see Lemma~\ref{lem:ConsequencesDefTwistedAction}\eqref{lem:ConsequencesDefTwistedAction:item:beta_e=identity}).
In fact, note that $\omega(e,s^*)\omega(es^*,s)x$ and $\omega(s^*,s)x$ belong to $\D_{es^*s}=\D_e\D_{s^*s}\sbe\D_e$
(see Lemma~\ref{lem:ConsequencesDefTwistedAction} below). Moreover, if $y\in \D_{es^*s}$, then
\begin{multline*}
y\omega(s^*,s)x=\beta_e\big(y\omega(s^*,s)\big)1_{es^*s}x=\beta_e\big(y\omega(s^*,s)\big)\omega(e,s^*s)x\\
=\beta_e(y)\omega(e,s^*))\omega(es^*,s)x=y\omega(e,s^*))\omega(es^*,s)x.
\end{multline*}
Since $y$ was arbitrary, this implies~\eqref{eq:omega(e,s^*)omega(es*,s)x=omega(s*,s)x}.
As we have seen in Proposition~\ref{prop:properties of twisted action coming from partial isometries}, all the
axioms in Definition~\ref{def:twisted action} (and many others) are satisfied in case the cocycles $\omega(s,t)$ come from partial isometries $u_s$ associated to a regular, saturated Fell bundle as in Section~\ref{sec:regular Fell bundles}. Later, we are going to see (Proposition~\ref{prop:SiebensTwistedActions}) that both axioms~\eqref{def:twisted action:item:omega(r,r*r)=omega(e,f)=1_ef_and_omega(rr*,r)=1} and~\eqref{def:twisted action:item:omega(s*,e)omega(s*e,s)x=omega(s*,s)x} are automatically satisfied in the presence of~\eqref{def:twisted action:item:beta_r beta_s=Ad_omega(r,s)beta_rs},~\eqref{def:twisted action:item:CocycleCondition}
and, in addition, Sieben's condition~\eqref{eq:SiebensCondition}.
\end{remark}

The following result gives some consequences of the axioms of twisted action (compare with Proposition~\ref{prop:properties of twisted action coming from partial isometries}).

\begin{lemma}\label{lem:ConsequencesDefTwistedAction}
If $\bigl(\{\D_s\}_{s\in S},\{\beta_s\}_{s\in S},\{\omega(s,t)\}_{s,t\in S}\bigr)$ is a twisted action of $S$ on $B$, then
the following properties hold for all $r,s,t\in S$ and $e,f\in E(S)$\textup:
\begin{enumerate}[(i)]
\item $\D_s=\D_{ss^*}$;\label{lem:ConsequencesDefTwistedAction:item:Ds=Dss*}
\item $\beta_e\colon\D_e\to\D_e$ is the identity map;\label{lem:ConsequencesDefTwistedAction:item:beta_e=identity}
\item $\beta_r(\D_{r^*}\cap\D_s)=\D_{rs}$;\label{lem:ConsequencesDefTwistedAction:item:beta_r(D_r* inter D_s)=D_rs}
\item $\D_r\sbe\D_s$ if $r\leq s$;\label{lem:ConsequencesDefTwistedAction:item:D_r sbe D_s}
\item $\D_r\D_s=\D_{rr^*s}=\D_{ss^*r}$ and $\D_{rs}=\D_{rss^*}$.
In particular, $\D_e\D_f=\D_{ef}$;\label{lem:ConsequencesDefTwistedAction:item:D_rD_s=...}
\item $\beta_{s^*}=\Ad_{\omega(s^*,s)}\circ\beta_{s}\inv$;\label{lem:ConsequencesDefTwistedAction:item:beta_s*=Ad_omega(s*,s)beta_s inv}
\item $\beta_t\rest{\D_s}=\Ad_{\omega(t,s^*s)}\circ\beta_s$
            whenever $s\leq t$;\label{lem:ConsequencesDefTwistedAction:item:beta_t restriction=...}
\item $\beta_s(\omega(s^*,s))=\omega(s,s^*)$. Here we have implicitly extended
$\beta_s\colon\D_{s^*}\to\D_{s}$ to the multiplier algebras $\beta_s\colon\mult(\D_{s^*})\to\mult(\D_{s})$;\label{lem:ConsequencesDefTwistedAction:item:beta_s(omega(s*,s))=omega(s,s*)}
\item $\beta_r(x\omega(s,t)^*)\omega(r,s)=\beta_r(x)\omega(r,st)\omega(rs,t)^*$ for all $x\in\D_{r^*}\cap\D_{st}$;\label{lem:ConsequencesDefTwistedAction:item:CocycleConditionWith*}
\item $\omega(s,e)=\omega(s,s^*se)$ and $\omega(e,s)=\omega(ess^*,s)$;\label{lem:ConsequencesDefTwistedAction:item:omega(s,e)=w(s,s*se)...}
\item $\omega(r,e)=1_{rr^*}$ whenever $e\geq r^*r$,
                     and $\omega(f,s)=1_{ss^*}$ whenever $f\geq ss^*$;\label{lem:ConsequencesDefTwistedAction:item:omega(r,e)=1_If_eGEQr*r}
\item $\omega(t^*,s)=\omega(t^*,ss^*)\omega(s^*,s)$ whenever $s\leq t$;\label{lem:ConsequencesDefTwistedAction:item:omega(t*,s)=...sLEQt}
\item $\omega(t,r^*r)x=\omega(t,s^*s)\omega(s,r^*r)x$ whenever $r\leq s\leq t$
            and $x\in \D_r$.\label{lem:ConsequencesDefTwistedAction:item:omega(t,r*r)x=omega(t,s*s)omega(s,r*r)x}
\end{enumerate}
\end{lemma}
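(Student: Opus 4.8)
**The plan is to prove the thirteen properties of Lemma~\ref{lem:ConsequencesDefTwistedAction} largely in the order listed, since the later items depend on the earlier ones.** For items~\eqref{lem:ConsequencesDefTwistedAction:item:Ds=Dss*}--\eqref{lem:ConsequencesDefTwistedAction:item:beta_r(D_r* inter D_s)=D_rs} I would first specialize the cocycle identity~\eqref{def:twisted action:item:beta_r beta_s=Ad_omega(r,s)beta_rs} to idempotents. Taking $r=s=e\in E$ gives $\beta_e\circ\beta_e=\Ad_{\omega(e,e)}\circ\beta_e$; combined with $\omega(e,e)=1_e$ from axiom~\eqref{def:twisted action:item:omega(r,r*r)=omega(e,f)=1_ef_and_omega(rr*,r)=1} this forces $\beta_e$ to be idempotent as a $^*$-isomorphism of $\D_e$, hence the identity, giving~\eqref{lem:ConsequencesDefTwistedAction:item:beta_e=identity}. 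For~\eqref{lem:ConsequencesDefTwistedAction:item:Ds=Dss*} I would use $\beta_s\colon\D_{s^*}\to\D_s$ together with $\beta_{s^*s}=\id_{\D_{s^*s}}$ and the relation $\beta_s\circ\beta_{s^*s}=\Ad_{\omega(s,s^*s)}\circ\beta_{s\cdot s^*s}=\Ad_{1}\circ\beta_s$ (using $\omega(s,s^*s)=1$ from axiom~\eqref{def:twisted action:item:omega(r,r*r)=omega(e,f)=1_ef_and_omega(rr*,r)=1}), which shows $\D_s=\ran(\beta_s)=\ran(\beta_s\circ\beta_{s^*s})=\beta_s(\D_{s^*s})$; on the other hand $ss^*\cdot s=s$ forces the domain/range bookkeeping to yield $\D_s=\D_{ss^*}$. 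Item~\eqref{lem:ConsequencesDefTwistedAction:item:beta_r(D_r* inter D_s)=D_rs} should follow by a direct computation with $\beta_r\circ\beta_s$ on the idempotent fibers, tracking how the cocycle identity conjugates ideals.

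\textbf{For the order and multiplication relations~\eqref{lem:ConsequencesDefTwistedAction:item:D_r sbe D_s}--\eqref{lem:ConsequencesDefTwistedAction:item:D_rD_s=...} I would exploit $\D_s=\D_{ss^*}$ to reduce everything to the idempotent semilattice.} If $r\leq s$ then $r=rr^*s$, so $rr^*\leq ss^*$; then $\D_r=\D_{rr^*}=\D_{rr^*}\D_{ss^*}\sbe\D_{ss^*}=\D_s$ once I have established $\D_e\D_f=\D_{ef}$ for idempotents, which itself comes from $\beta_e=\id$ and the fact that each $\D_e$ is an ideal of $B$ (so $\D_e\D_f=\D_e\cap\D_f=\D_{ef}$). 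This is the natural place to settle the last sentence of~\eqref{lem:ConsequencesDefTwistedAction:item:D_rD_s=...} and then lift to general $\D_r\D_s=\D_{rr^*s}$ using $\D_r=\D_{rr^*}$ and $\D_s=\D_{ss^*}$.

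\textbf{The inverse and restriction formulas~\eqref{lem:ConsequencesDefTwistedAction:item:beta_s*=Ad_omega(s*,s)beta_s inv}--\eqref{lem:ConsequencesDefTwistedAction:item:beta_s(omega(s*,s))=omega(s,s*)} are where the cocycle $\omega$ genuinely enters.} For~\eqref{lem:ConsequencesDefTwistedAction:item:beta_s*=Ad_omega(s*,s)beta_s inv} I would apply axiom~\eqref{def:twisted action:item:beta_r beta_s=Ad_omega(r,s)beta_rs} with the pair $(s^*,s)$: $\beta_{s^*}\circ\beta_s=\Ad_{\omega(s^*,s)}\circ\beta_{s^*s}=\Ad_{\omega(s^*,s)}\circ\id$, and composing on the right with $\beta_s^{-1}$ yields the claim. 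Item~\eqref{lem:ConsequencesDefTwistedAction:item:beta_t restriction=...} for $s\leq t$ follows the same pattern from the pair $(t,s^*s)$ together with $t\cdot s^*s=s$ (since $s\leq t$ means $s=ts^*s$). Item~\eqref{lem:ConsequencesDefTwistedAction:item:beta_s(omega(s*,s))=omega(s,s*)} I expect to get by applying $\beta_s$ to both sides of a rearrangement of~\eqref{lem:ConsequencesDefTwistedAction:item:beta_s*=Ad_omega(s*,s)beta_s inv}, using that $\beta_s\circ\beta_{s^*}=\Ad_{\omega(s,s^*)}\circ\id$ and uniqueness of the implementing unitary multiplier.

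\textbf{The cocycle relations~\eqref{lem:ConsequencesDefTwistedAction:item:CocycleConditionWith*}--\eqref{lem:ConsequencesDefTwistedAction:item:omega(t,r*r)x=omega(t,s*s)omega(s,r*r)x} close the proof, and I expect~\eqref{lem:ConsequencesDefTwistedAction:item:CocycleConditionWith*} to be the main obstacle.} The starred cocycle identity should come from axiom~\eqref{def:twisted action:item:CocycleCondition} by substituting for $x$ a suitably transformed element and multiplying through by appropriate unitary multipliers to move the involution across; the delicate point is keeping the domain conditions $x\in\D_{r^*}\cap\D_{st}$ consistent after the substitution, which is why I regard this as the hardest step. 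Items~\eqref{lem:ConsequencesDefTwistedAction:item:omega(s,e)=w(s,s*se)...} and~\eqref{lem:ConsequencesDefTwistedAction:item:omega(r,e)=1_If_eGEQr*r} are then specializations: for~\eqref{lem:ConsequencesDefTwistedAction:item:omega(r,e)=1_If_eGEQr*r}, $e\geq r^*r$ gives $re=r$, and feeding this into the cocycle condition together with $\omega(r,r^*r)=1$ collapses $\omega(r,e)$ to $1_{rr^*}$. Finally~\eqref{lem:ConsequencesDefTwistedAction:item:omega(t*,s)=...sLEQt} and~\eqref{lem:ConsequencesDefTwistedAction:item:omega(t,r*r)x=omega(t,s*s)omega(s,r*r)x} should follow by applying the cocycle condition~\eqref{def:twisted action:item:CocycleCondition} to the appropriate ordered triples and simplifying via the idempotent normalizations in~\eqref{def:twisted action:item:omega(r,r*r)=omega(e,f)=1_ef_and_omega(rr*,r)=1} and~\eqref{lem:ConsequencesDefTwistedAction:item:omega(r,e)=1_If_eGEQr*r}, exactly paralleling the partial-isometry computations of items~\eqref{prop:properties of twisted action coming from partial isometries:item:omega(t*,s)=omega(t*,ss*)omega(s*,s)} and~\eqref{prop:properties of twisted action coming from partial isometries:item:omega(t,r^*r)=omega(t,s^*s)omega(s,r^*r)} in Proposition~\ref{prop:properties of twisted action coming from partial isometries}.
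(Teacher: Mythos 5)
Your plan tracks the paper's proof closely for items (i)--(vii) and (ix)--(xiii): the domain/range bookkeeping with axiom \eqref{def:twisted action:item:beta_r beta_s=Ad_omega(r,s)beta_rs} for (i)--(iii) (the paper's version of (i) is the one-liner $\D_s=\dom(\beta_s\circ\beta_{s^*})=\dom(\Ad_{\omega(s,s^*)}\circ\beta_{ss^*})=\D_{ss^*}$, slightly more direct than your route through the pair $(s,s^*s)$), the reduction to the idempotent semilattice for (iv)--(v), the substitution $x\mapsto x\omega(s,t)^*$ in the cocycle identity for (ix), and the verification of the multiplier identities (x)--(xiii) by testing against arbitrary elements of the relevant ideals are all exactly what the paper does.

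The one genuine gap is item \eqref{lem:ConsequencesDefTwistedAction:item:beta_s(omega(s*,s))=omega(s,s*)}, namely $\beta_s(\omega(s^*,s))=\omega(s,s^*)$. You propose to deduce it from (vi) by applying $\beta_s$ and invoking ``uniqueness of the implementing unitary multiplier.'' That argument only yields $\Ad_{\beta_s(\omega(s^*,s))}=\Ad_{\omega(s,s^*)}$ as automorphisms of $\D_{ss^*}$, hence that $\omega(s,s^*)^*\beta_s(\omega(s^*,s))$ is a \emph{central} unitary multiplier of $\D_{ss^*}$ --- not that it equals $1$. Implementing unitaries are never unique beyond the center, and in the semi-abelian case (the paper's main application, where every $\D_e$ is commutative) $\Ad_u$ is trivial for every unitary $u$, so this route gives no information whatsoever. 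The paper instead obtains the identity directly from the cocycle axiom \eqref{def:twisted action:item:CocycleCondition}, applied (after extension to multipliers, or against an approximate unit) with the triple $(s,s^*,s)$: this gives $\beta_s(\omega(s^*,s))\,\omega(s,s^*s)=\omega(s,s^*)\,\omega(ss^*,s)$, and both $\omega(s,s^*s)$ and $\omega(ss^*,s)$ equal $1_s$ by axiom \eqref{def:twisted action:item:omega(r,r*r)=omega(e,f)=1_ef_and_omega(rr*,r)=1}. You need that extra input from the cocycle condition; conjugation alone cannot detect the central part of a cocycle.
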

\begin{proof}
Note that $\D_s=\dom\big(\beta_s\circ\beta_{s^*}\big)=\dom\big(\Ad_{\omega(s,s^*)}\circ\beta_{ss^*}\big)=\D_{ss^*}$,
which proves~\eqref{lem:ConsequencesDefTwistedAction:item:Ds=Dss*}.
To prove~\eqref{lem:ConsequencesDefTwistedAction:item:beta_e=identity},
observe that $\beta_e\circ\beta_e=\Ad_{\omega(e,e)}\circ\beta_e=\beta_e$ because $\omega(e,e)=1_e$.
Since $\beta_e\colon\D_e\to \D_e$ is an automorphism, it must be the identity map.
To check~\eqref{lem:ConsequencesDefTwistedAction:item:beta_r(D_r* inter D_s)=D_rs}, notice that
\begin{equation*}
\beta_r(\D_{r^*}\cap\D_s)=\ran(\beta_r\circ\beta_s)=\ran(\Ad_{\omega(r,s)}\circ\beta_{rs})=\D_{rs}.
\end{equation*}
To prove~\eqref{lem:ConsequencesDefTwistedAction:item:D_r sbe D_s},
first observe that $sr^*r=r$ because $r\leq s$. Using (the just checked)
property~\eqref{lem:ConsequencesDefTwistedAction:item:beta_r(D_r* inter D_s)=D_rs}, we get
\begin{equation*}
\D_r=\D_{sr^*r}=\beta_s(\D_{s^*}\cap\D_{r^*r})\sbe\beta_s(\D_{s^*})=\D_s.
\end{equation*}
To prove~\eqref{lem:ConsequencesDefTwistedAction:item:D_rD_s=...} we use that $\beta_{rr^*}$ is the identity
on $\D_{rr^*}$, and also~\eqref{lem:ConsequencesDefTwistedAction:item:Ds=Dss*}
and~\eqref{lem:ConsequencesDefTwistedAction:item:beta_r(D_r* inter D_s)=D_rs} to get
\begin{equation*}
\D_r\D_s=\D_{rr^*}\D_s=\beta_{rr^*}(\D_{rr^*}\D_s)=\D_{rr^*s}.
\end{equation*}
Since $\D_r\D_s=\D_r\cap\D_s=\D_s\D_t$ (this holds for ideals of a \cstar{}algebra), we also have $\D_r\D_s=\D_{ss^*r}$.
And by~\eqref{lem:ConsequencesDefTwistedAction:item:Ds=Dss*}, we have $\D_{rss^*}=\D_{rss^*ss^*r^*}=\D_{rss^*r^*}=\D_{rs}$.
To prove~\eqref{lem:ConsequencesDefTwistedAction:item:beta_s(omega(s*,s))=omega(s,s*)},
we use axioms~~\eqref{def:twisted action:item:CocycleCondition}
and~\eqref{def:twisted action:item:omega(r,r*r)=omega(e,f)=1_ef_and_omega(rr*,r)=1} in Definition~\ref{def:twisted action} to get
\begin{multline*}
\beta_s(\omega(s^*,s))=\beta_s(\omega(s^*,s))1_s=\beta_s(\omega(s^*,s))\omega(s,s^*s)\\
=\omega(s,s^*)\omega(ss^*,s)=\omega(s,s^*)1_s=\omega(s,s^*).
\end{multline*}
Property~\eqref{lem:ConsequencesDefTwistedAction:item:CocycleConditionWith*} is a consequence
of~\eqref{def:twisted action:item:CocycleCondition} in Definition~\ref{def:twisted action}. In fact,
applying Definition~\ref{def:twisted action}\eqref{def:twisted action:item:CocycleCondition} with $x\omega(s,t)^*$ in place of $x$, we get
\begin{equation*}
\beta_r(x)\omega(r,st)=\beta_r(x\omega(s,t)^*)\omega(r,s)\omega(rs,t).
\end{equation*}
Multiplying this last equation by $\omega(rs,t)^*$ on the right one arrives at~\eqref{lem:ConsequencesDefTwistedAction:item:CocycleConditionWith*}.
In order to check~\eqref{lem:ConsequencesDefTwistedAction:item:omega(s,e)=w(s,s*se)...},
take $y\in \D_{s^*se}$ and define $x\defeq \beta_s(y)\in \D_{ses^*}$ (note that every element
of $\D_{ses^*}$ has this form for some $y\in \D_{s^*se}$). Note that $\omega(s,s^*se)$ and $\omega(s,e)$ are unitary
multipliers of $\D_{se}=\D_{ses^*}$. Using axioms~\eqref{def:twisted action:item:CocycleCondition}
and~\eqref{def:twisted action:item:omega(r,r*r)=omega(e,f)=1_ef_and_omega(rr*,r)=1} in Definition~\ref{def:twisted action}, we get
\begin{multline*}
x\omega(s,s^*se)=\beta_s(y)\omega(s,s^*se)=\beta_s(y1_{s^*se})\omega(s,s^*se)\\
=\beta_s(y\omega(s^*s,e))\omega(s,s^*se)=\beta_s(y)\omega(s,s^*s)\omega(ss^*s,e)=x1_{ss^*}\omega(s,e)=x\omega(s,e).
\end{multline*}
Since $x$ is an arbitrary element of $\D_{ses^*}$, we must have $\omega(s,s^*se)=\omega(s,e)$. Similarly, we
can check the second part of~\eqref{lem:ConsequencesDefTwistedAction:item:omega(s,e)=w(s,s*se)...}: note that
$\omega(e,s)$ and $\omega(ess^*,s)$ are multipliers of $\D_{es}=\D_{ess^*}$.
Take an arbitrary element $x\in \D_{ess^*}=\D_e\cap\D_{ss^*}$. Then, using
again~\eqref{def:twisted action:item:CocycleCondition} and~\eqref{def:twisted action:item:omega(r,r*r)=omega(e,f)=1_ef_and_omega(rr*,r)=1}
in Definition~\ref{def:twisted action} and that $\beta_e$ is the identity map on $\D_e$, we get
\begin{multline*}
x\omega(e,s)=\beta_e(x1_{ss^*})\omega(e,s)=\beta_e(x\omega(ss^*,s))\omega(e,s)\\
=\beta_e(x)\omega(e,ss^*)\omega(ess^*,s)=x1_{ess^*}\omega(ess^*,s)=x\omega(ess^*,s).
\end{multline*}
Therefore $\omega(e,s)=\omega(ess^*,s)$. To prove~\eqref{lem:ConsequencesDefTwistedAction:item:omega(r,e)=1_If_eGEQr*r},
take $r\in S$ and $e\in E(S)$ with $e\geq r^*r$. Then, by
Definition~\ref{def:twisted action}\eqref{def:twisted action:item:omega(r,r*r)=omega(e,f)=1_ef_and_omega(rr*,r)=1} and the property~\eqref{lem:ConsequencesDefTwistedAction:item:omega(s,e)=w(s,s*se)...} just checked, we
have $\omega(r,e)=\omega(r,r^*re)=\omega(r,r^*r)=1_{r}=1_{rr^*}$. The second part of~\eqref{lem:ConsequencesDefTwistedAction:item:omega(r,e)=1_If_eGEQr*r} is proved similarly.
In order to prove~\eqref{lem:ConsequencesDefTwistedAction:item:omega(t*,s)=...sLEQt}, assume that $s\leq t$ and take
$x\in \D_s\sbe\D_t$. Note that $y\defeq \beta_{t^*}(x)\in \D_{s^*}=\D_{s^*s}$ and every element of $\D_{s^*s}$ has this form.
Observe that $\omega(t^*,s)$, $\omega(t^*,ss^*)$ and $\omega(s^*,s)$ are unitary multipliers of
$\D_{t^*s}=\D_{s^*s}$ and by axioms~\eqref{def:twisted action:item:CocycleCondition}
and~\eqref{def:twisted action:item:omega(r,r*r)=omega(e,f)=1_ef_and_omega(rr*,r)=1}, we have
\begin{multline*}
y\omega(t^*,s)=\beta_{t^*}(x1_{ss^*})\omega(t^*,s)=\beta_{t^*}(x\omega(ss^*,s))\omega(t^*,s)\\
=\beta_{t^*}(x)\omega(t^*,ss^*)\omega(s^*,s)=y\omega(t^*,ss^*)\omega(s^*,s).
\end{multline*}
Thus $\omega(t^*,s)=\omega(t^*,ss^*)\omega(s^*,s)$. Finally, to prove~\eqref{lem:ConsequencesDefTwistedAction:item:omega(t,r*r)x=omega(t,s*s)omega(s,r*r)x},
assume that $r\leq s\leq t$ and $x\in \D_r=\D_{rr^*}$. Observe that $\omega(t,r^*r)x$ and $\omega(t,s^*s)\omega(s,r^*r)x$ are elements
of $\D_r$ every element of $\D_r$ has the form $z=\beta_t(y)$ for some $y\in \D_{r^*}=\D_{r^*r}$.
By axioms~\eqref{def:twisted action:item:CocycleCondition} and~\eqref{def:twisted action:item:omega(r,r*r)=omega(e,f)=1_ef_and_omega(rr*,r)=1} in
Definition~\ref{def:twisted action}, we have
\begin{multline*}
z\omega(t,r^*r)x=\beta_t(y1_{r^*r})\omega(t,r^*r)x=\beta_t(y\omega(s^*s,r^*r))\omega(t,r^*r)x\\
=\beta_t(y)\omega(t,s^*s)\omega(ts^*s,r^*r)x=z\omega(t,s^*s)\omega(s,r^*r)x.
\end{multline*}
Therefore $\omega(t,r^*r)x=\omega(t,s^*s)\omega(s,r^*r)x$ as desired.
\end{proof}

Given a twisted action $\big(\{\D_s\}_{s\in S},\{\beta_s\}_{s\in S},\{\omega(s,t)\}_{s,t\in S}\big)$,
we would like to define a Fell bundle $\B$ over $S$ as follows:
\begin{equation}\label{eq:DefFellBundleFromTwistedAction}
\B\defeq \{(b,s)\in B\times S\colon b\in \D_s\}
\end{equation}
Writing $b\delta_s$ for $(b,s)\in \B$, we define the operations:
\begin{equation}\label{eq:DefProductFellBundleFromTwistedAction}
(b_s\delta_s)\cdot(b_t\delta_t)\defeq\beta_s\big(\beta_s^{-1}(b_s)b_t\big)\omega(s,t)\delta_{st}
\end{equation}
and
\begin{equation}\label{eq:DefInvolutionFellBundleFromTwistedAction}
(b_s\delta_s)^*\defeq \beta_s^{-1}(b_s^*)\omega(s^*,s)^*\delta_{s^*}
\end{equation}
for all $b_s\in \D_s$ and $b_t\in \D_t$. The product~\eqref{eq:DefProductFellBundleFromTwistedAction} is well-defined because
$b_s\in \D_s=\dom(\beta_s\inv)$ and hence $\beta_s\inv(b_s)b_t\in \D_{s^*}\D_t\sbe\D_{s^*}=\dom(\beta_s)$. Moreover,
axiom~\eqref{def:twisted action:item:beta_r beta_s=Ad_omega(r,s)beta_rs} of Definition~\ref{def:twisted action} shows that
\begin{equation*}
\beta_s\big(\beta_s^{-1}(b_s)b_t\big)\in \beta_s(\D_{s^*}\D_t)=\beta_s(\D_{s^*}\cap\D_t)=\D_{st}.
\end{equation*}
Since $\omega(s,t)\in \mult(\D_{st})$, we get $\beta_s\big(\beta_s^{-1}(b_s)b_t\big)\omega(s,t)\in \D_{st}$.
It is easy to see that the involution~\eqref{eq:DefInvolutionFellBundleFromTwistedAction} is also well-defined.
Moreover, by Lemma~\ref{lem:ConsequencesDefTwistedAction}\eqref{lem:ConsequencesDefTwistedAction:item:beta_s*=Ad_omega(s*,s)beta_s inv}, $\beta_s\inv(a)=\omega(s^*,s)^*\beta_{s^*}(a)\omega(s^*,s)$, so that
\begin{equation}\label{eq:OtherFormulaForInvolution}
(a\delta_s)^*=\beta_s\inv(a^*)\omega(s^*,s)^*\delta_{s^*}=\omega(s^*,s)^*\beta_{s^*}(a^*)\delta_{s^*}.
\end{equation}
Observe that the formulas~\eqref{eq:DefProductFellBundleFromTwistedAction} and~\eqref{eq:DefInvolutionFellBundleFromTwistedAction}
are exactly the same ones appearing in \cite[Section~2]{Exel:twisted.partial.actions}
for a Fell bundle defined from a twisted partial action of a group.

The main difficulty is to define the inclusion maps $j_{t,s}\colon\B_s\hookrightarrow \B_t$ whenever $s\leq t$.
One first obvious choice would be $j_{t,s}(b_s\delta_s)=b_s\delta_t$, but this does not
work in general although it would work if we had Sieben's condition~\eqref{eq:SiebensCondition}.
The problem is to prove that the inclusion maps $j_{t,s}$ are compatible with the operations above.
To motivate the correct definition, let us temporarily assume that
the twisted action comes from partial isometries $u_s$ associated to a regular, saturated, concrete Fell bundle $\A=\{\A_s\}_{s\in S}$
in $\bound(H)$ as in Section~\ref{sec:regular Fell bundles}. In this case, if $s\leq t$ in $S$,
then $\A_s\sbe \A_t\sbe\bound(H)$. And by regularity, $\A_s=\D_su_s$ and $\A_t=\D_tu_t$.
So, any element $x\in \A_s$ can be written in two ways: $x=au_s=bu_t$ for some $a\in \D_s$ and $b\in \D_t$. The relation between $a$ and $b$ is
$b=au_su_t^*$. Now note that $u_su_t^*=u_su_{s^*s}u_t^*=(u_tu_{s^*s}u_{ts^*s}^*)^*=\omega(t,s^*s)^*$, so that $b=a\omega(t,s^*s)^*$. Thus
the inclusion $\A_s\sbe\A_t$ determines a map $\D_s\to \D_t$ by the rule $a\mapsto a\omega(t,s^*s)^*$. Hence, it is natural to define
\begin{equation}\label{eq:DefInclusionsFellBundleFromTwistedAction}
j_{t,s}\colon\B_s\to \B_t\quad\mbox{by}\quad j_{t,s}(a\delta_s)\defeq a\omega(t,s^*s)^*\delta_t.
\end{equation}
Note that $\omega(t,s^*s)\in \mult(\D_{ts^*s})=\mult(\D_s)$ and hence $a\omega(t,s^*s)\in \D_s\sbe\D_t$ by
Lemma~\ref{lem:ConsequencesDefTwistedAction}\eqref{lem:ConsequencesDefTwistedAction:item:D_r sbe D_s}, so that $j_{t,s}$ is well-defined.

\begin{theorem}\label{theo:FellBundleFromTwistedAction}
The bundle $\B$ defined by Equation~\eqref{eq:DefFellBundleFromTwistedAction} with the obvious
projection $\B\onto S$, with the linear and norm structure on each fiber $\B_s$ inherited from $\D_s$, with the algebraic
operations~\eqref{eq:DefProductFellBundleFromTwistedAction} and~\eqref{eq:DefInvolutionFellBundleFromTwistedAction},
and the inclusion maps~\eqref{eq:DefInclusionsFellBundleFromTwistedAction} is a saturated, regular Fell bundle over $S$.
\end{theorem}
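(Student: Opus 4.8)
The plan is to check the nine axioms of Definition~\ref{def:Fell bundles over ISG} in turn, and then verify saturation and regularity. Since the product~\eqref{eq:DefProductFellBundleFromTwistedAction} and the involution~\eqref{eq:DefInvolutionFellBundleFromTwistedAction} are formally identical to the formulas used in the group case of \cite[Section~2]{Exel:twisted.partial.actions}, I expect the purely ``fiberwise'' axioms~\eqref{def:Fell bundles over ISG:item:MultiplicationBilinear}--\eqref{def:Fell bundles over ISG:item:C*-Condition} to follow by the same computations as there, the only inputs being axioms~\eqref{def:twisted action:item:beta_r beta_s=Ad_omega(r,s)beta_rs} and~\eqref{def:twisted action:item:CocycleCondition} of Definition~\ref{def:twisted action} together with Lemma~\ref{lem:ConsequencesDefTwistedAction}\eqref{lem:ConsequencesDefTwistedAction:item:beta_s*=Ad_omega(s*,s)beta_s inv} and~\eqref{lem:ConsequencesDefTwistedAction:item:beta_s(omega(s*,s))=omega(s,s*)}. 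Concretely: bilinearity and submultiplicativity of the norm are immediate, since $\beta_s$ is an isometric \Star{}isomorphism and $\omega(s,t)$ is unitary; associativity is the standard cocycle computation using~\eqref{def:twisted action:item:beta_r beta_s=Ad_omega(r,s)beta_rs} and~\eqref{def:twisted action:item:CocycleCondition}; the identities $(a^*)^*=a$ and $(a\cdot b)^*=b^*\cdot a^*$ follow from~\eqref{eq:OtherFormulaForInvolution} and Lemma~\ref{lem:ConsequencesDefTwistedAction}\eqref{lem:ConsequencesDefTwistedAction:item:beta_s(omega(s*,s))=omega(s,s*)}; and the \cstar{}condition reduces to the computation $(b\delta_s)^*(b\delta_s)=\beta_s\inv(b^*b)\delta_{s^*s}$ (which uses Lemma~\ref{lem:ConsequencesDefTwistedAction}\eqref{lem:ConsequencesDefTwistedAction:item:beta_s*=Ad_omega(s*,s)beta_s inv} and~\eqref{lem:ConsequencesDefTwistedAction:item:beta_s(omega(s*,s))=omega(s,s*)}), whose right-hand side is a positive element of $\D_{s^*s}=\B_{s^*s}$ of norm $\|b\|^2$.

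The genuinely new content lies in the inclusion maps~\eqref{eq:DefInclusionsFellBundleFromTwistedAction}. Axiom~\eqref{def:Fell bundles over ISG:item:CompatibilityInclusions} is quick: for $r\leq s\leq t$ one computes $j_{t,s}(j_{s,r}(a\delta_r))=a\,\omega(s,r^*r)^*\omega(t,s^*s)^*\delta_t$, and comparison with $j_{t,r}(a\delta_r)=a\,\omega(t,r^*r)^*\delta_t$ reduces, after taking adjoints, to precisely the relation $\omega(t,r^*r)x=\omega(t,s^*s)\omega(s,r^*r)x$ for $x\in\D_r$ furnished by Lemma~\ref{lem:ConsequencesDefTwistedAction}\eqref{lem:ConsequencesDefTwistedAction:item:omega(t,r*r)x=omega(t,s*s)omega(s,r*r)x}. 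For axiom~\eqref{def:Fell bundles over ISG:item:CompatibilityInvolutionWithInclusions} I would expand both $j_{t,s}(a\delta_s)^*$ and $j_{t^*,s^*}\big((a\delta_s)^*\big)$ via~\eqref{eq:DefInvolutionFellBundleFromTwistedAction} and~\eqref{eq:DefInclusionsFellBundleFromTwistedAction}, and reconcile them using Lemma~\ref{lem:ConsequencesDefTwistedAction}\eqref{lem:ConsequencesDefTwistedAction:item:beta_t restriction=...}, \eqref{lem:ConsequencesDefTwistedAction:item:beta_s(omega(s*,s))=omega(s,s*)} and the relation $\omega(t^*,s)=\omega(t^*,ss^*)\omega(s^*,s)$ of~\eqref{lem:ConsequencesDefTwistedAction:item:omega(t*,s)=...sLEQt}.

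For the last axiom~\eqref{def:Fell bundles over ISG:item:CompatibilityMultiplicationWithInclusions} the strategy is to invoke Lemma~\ref{lem:TechnicalLemmaDefFellBundle}: once~\eqref{def:Fell bundles over ISG:item:CompatibilityInvolutionWithInclusions} and the preceding axioms are in hand, it suffices to verify the single case $u=v$, namely $j_{t,s}(a)\cdot b=j_{tu,su}(a\cdot b)$ for $s\leq t$, $a\in\B_s$, $b\in\B_u$. I would expand both sides through~\eqref{eq:DefProductFellBundleFromTwistedAction} and~\eqref{eq:DefInclusionsFellBundleFromTwistedAction}, move the $\omega$-factors across $\beta_t$ by Lemma~\ref{lem:ConsequencesDefTwistedAction}\eqref{lem:ConsequencesDefTwistedAction:item:beta_t restriction=...}, and reduce the resulting identity of $\omega$-products to the cocycle relation~\eqref{def:twisted action:item:CocycleCondition} (in the form~\eqref{lem:ConsequencesDefTwistedAction:item:CocycleConditionWith*}) together with the normalizations~\eqref{def:twisted action:item:omega(r,r*r)=omega(e,f)=1_ef_and_omega(rr*,r)=1}. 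I expect this to be the main obstacle: the bookkeeping of which ideal each factor lives in, and of the products $su$, $(su)^*(su)$, etc., is delicate, and it is exactly here that the specific axioms~\eqref{def:twisted action:item:omega(r,r*r)=omega(e,f)=1_ef_and_omega(rr*,r)=1} and~\eqref{def:twisted action:item:omega(s*,e)omega(s*e,s)x=omega(s*,s)x} of our notion of twisted action must do their work.

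Finally, saturation and regularity are short. For saturation, the closed linear span of $\B_s\cdot\B_t$ equals $\overline{\beta_s(\D_{s^*}\D_t)\,\omega(s,t)}=\D_{st}=\B_{st}$, using Lemma~\ref{lem:ConsequencesDefTwistedAction}\eqref{lem:ConsequencesDefTwistedAction:item:beta_r(D_r* inter D_s)=D_rs} and the unitarity of $\omega(s,t)$. For regularity I would identify each fiber $\B_s$, as an imprimitivity $\D_{ss^*},\D_{s^*s}$-bimodule, with ${}_{\beta_s\inv}\D_{s^*s}$: since $\omega(ss^*,s)=\omega(s,s^*s)=1_s$ by~\eqref{def:twisted action:item:omega(r,r*r)=omega(e,f)=1_ef_and_omega(rr*,r)=1}, the left action reduces to multiplication in $\D_s$, while the right action and the right inner product are $b\cdot c=\beta_s(\beta_s\inv(b)c)$ and $\braket{b}{b'}_{\D_{s^*s}}=\beta_s\inv(b^*b')$, so that $\beta_s\inv$ is an isomorphism of imprimitivity bimodules $\B_s\to{}_{\beta_s\inv}\D_{s^*s}$. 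Since every bimodule of the form ${}_\phi B$ is regular, as observed before Proposition~\ref{prop:Regular=>ABisomorphicAndBimoduleTrivial}, each fiber is regular and hence $\B$ is a regular Fell bundle.
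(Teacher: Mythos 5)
Your overall route is the same as the paper's: verify the axioms of Definition~\ref{def:Fell bundles over ISG} one by one, use Lemma~\ref{lem:TechnicalLemmaDefFellBundle} to reduce axiom~\eqref{def:Fell bundles over ISG:item:CompatibilityMultiplicationWithInclusions} to a single case, and settle saturation via Lemma~\ref{lem:ConsequencesDefTwistedAction}\eqref{lem:ConsequencesDefTwistedAction:item:beta_r(D_r* inter D_s)=D_rs}. Your regularity argument (identifying $\B_s$ with $_{\beta_s\inv}\D_{s^*s}$ via $b\delta_s\mapsto\beta_s\inv(b)$, using $\omega(s,s^*s)=\omega(ss^*,s)=1_s$) is correct and is in fact a cleaner, more explicit version of what the paper leaves to the reader.

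There is, however, one genuine gap: you claim that the fiberwise axioms~\eqref{def:Fell bundles over ISG:item:MultiplicationBilinear}--\eqref{def:Fell bundles over ISG:item:C*-Condition} follow ``by the same computations as the group case'' with \emph{only} axioms~\eqref{def:twisted action:item:beta_r beta_s=Ad_omega(r,s)beta_rs} and~\eqref{def:twisted action:item:CocycleCondition} of Definition~\ref{def:twisted action} as input, and you relegate axioms~\eqref{def:twisted action:item:omega(r,r*r)=omega(e,f)=1_ef_and_omega(rr*,r)=1} and~\eqref{def:twisted action:item:omega(s*,e)omega(s*e,s)x=omega(s*,s)x} to the verification of axiom~\eqref{def:Fell bundles over ISG:item:CompatibilityMultiplicationWithInclusions}. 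This is backwards. The anti-multiplicativity $(a\delta_s\cdot b\delta_t)^*=(b\delta_t)^*\cdot(a\delta_s)^*$ does \emph{not} close on cocycle identities alone: after expanding both sides one is left needing
\begin{equation*}
\omega(t^*,s^*s)\,\omega(t^*s^*s,t)\,c^*=\omega(t^*,t)\,c^*\qquad (c\in\D_{t^*s^*}),
\end{equation*}
which is precisely axiom~\eqref{def:twisted action:item:omega(s*,e)omega(s*e,s)x=omega(s*,s)x} (with $t$ in place of $s$ and $e=s^*s$). In the group case this identity is vacuous because the only idempotent is the unit and one has the normalization $\omega(t,1)=\omega(1,t)=1$; in the inverse semigroup setting that normalization is exactly what is \emph{not} assumed, and axiom~\eqref{def:twisted action:item:omega(s*,e)omega(s*e,s)x=omega(s*,s)x} exists precisely to make the involution anti-multiplicative. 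By contrast, the compatibility of the multiplication with the inclusion maps, where you expect the ``main obstacle,'' only needs Lemma~\ref{lem:ConsequencesDefTwistedAction}\eqref{lem:ConsequencesDefTwistedAction:item:CocycleConditionWith*} and~\eqref{lem:ConsequencesDefTwistedAction:item:omega(s,e)=w(s,s*se)...}, i.e.\ consequences of axioms~\eqref{def:twisted action:item:CocycleCondition} and~\eqref{def:twisted action:item:omega(r,r*r)=omega(e,f)=1_ef_and_omega(rr*,r)=1} alone. As written, your plan for axiom~\eqref{def:Fell bundles over ISG:item:InvolutionIdempotentIsometricAntiMultiplicative} would therefore stall; you need to insert axiom~\eqref{def:twisted action:item:omega(s*,e)omega(s*e,s)x=omega(s*,s)x} at that step.
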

\begin{proof}
Of course, the multiplication~\eqref{eq:DefProductFellBundleFromTwistedAction} is a bilinear map from $\A_s\times\A_t$ to $\A_{st}$
and the involution~\eqref{eq:DefInvolutionFellBundleFromTwistedAction} is a conjugate-linear map from $\A_s$ to $\A_{s^*}$ for all $s,t\in S$.
The associativity of the multiplication is proved in the same way as in \cite[Proposition~2.4]{Exel:twisted.partial.actions} (see
also the proof of Proposition~3.1 in \cite{SiebenTwistedActions}; it is also possible to use the idea appearing in \cite[Theorem~2.4]{DokuchaevExelSimon:twisted.partial.actions} where no approximate unit is needed).
Moreover, it is easy to see that $\|(a\delta_s)\cdot(b\delta_t)\|\leq\|a\delta_s\|\|b\delta_t\|$ for all $s,t\in S$, $a\in \D_s$ and $b\in \D_t$.
All this together gives us the axioms~\eqref{def:Fell bundles over ISG:item:MultiplicationBilinear},~\eqref{def:Fell bundles over ISG:item:MultiplicationAssociative},~\eqref{def:Fell bundles over ISG:item:MultiplicationBounded} and~\eqref{def:Fell bundles over ISG:item:InvolutionConjugateLinear} in Definition~\ref{def:Fell bundles over ISG}.

Let us check that $\big((a\delta_s)^*\big)^*=a\delta_s$ for all $s\in S$ and $a\in \D_s$. By
Equation~\eqref{eq:OtherFormulaForInvolution}, we have
\begin{align*}
\big((a\delta_s)^*\big)^*&=\big(\beta_s\inv(a^*)\omega(s^*,s)^*\delta_{s^*}\big)^*
                        =\omega(s,s^*)^*\beta_s(\omega(s^*,s)\beta_s\inv(a))\delta_s\\
                        &=\omega(s,s^*)^*\beta_s(\omega(s^*,s))a\delta_s
                        =\omega(s,s^*)^*\omega(s,s^*)a\delta_s=a\delta_s.
\end{align*}
Next, we prove that the involution on $\B$ is anti-multiplicative, that is, we show that
$\big((a\delta_s)\cdot(b\delta_t)\big)^*=(b\delta_t)^*\cdot (a\delta_s)^*$ for all $a\in \D_s$ and $b\in \D_t$.
We have
\begin{multline*}
\big((a\delta_s)\cdot(b\delta_t)\big)^*=\big(\beta_s(\beta_t\inv(a)b)\omega(s,t)\delta_{st}\big)^*\\
=\beta_{st}\inv\big(\omega(s,t)^*\beta_s(b^*\beta_s\inv(a^*))\big)\omega(t^*s^*,st)^*\delta_{t^*s^*}=\ldots
\end{multline*}
Let $x\defeq b^*\beta_s\inv(a^*)\in \D_t\cap\D_{s^*}$, so that $\beta_t\inv(x)\in \D_{t^*}\cap\D_{t^*s^*}$ and hence
\begin{equation*}
\beta_s(x)=\beta_s(\beta_t(\beta_t\inv(s)))=\omega(s,t)\beta_{st}(\beta_t\inv(x))\omega(s,t)^*.
\end{equation*}
Thus, the above equals
\begin{multline*}
\ldots=\beta_{st}\inv\big(\beta_{st}(\beta_t\inv(x))\omega(s,t)^*\big)\omega(t^*s^*,st)^*\delta_{t^*s^*}\\
        =\omega(t^*s^*,st)^*\beta_{t^*s^*}\big(\beta_{st}(\beta_t\inv(x))\omega(s,t)^*\big)\delta_{t^*s^*}=\ldots
\end{multline*}
which by Lemma~\ref{lem:ConsequencesDefTwistedAction}\eqref{lem:ConsequencesDefTwistedAction:item:CocycleConditionWith*} is equal to
\begin{align*}
\ldots=\omega(t^*s^*,st)^*&\beta_{t^*s^*}\big(\beta_{st}(\beta_t\inv(x))\big)\omega(t^*s^*,st)\omega(t^*s^*s,t)^*\omega(t^*s^*,s)^*\delta_{t^*s^*}\\
    &=\beta_{st}\inv\big(\beta_{st}(\beta_t\inv(x))\big)\omega(t^*s^*s,t)^*\omega(t^*s^*,s)^*\delta_{t^*s^*}\\
    &=\beta_t\inv(x)\omega(t^*s^*s,t)^*\omega(t^*s^*,s)^*\delta_{t^*s^*}\\
    &=\beta_t\inv(b^*\beta_s\inv(a^*))\omega(t^*s^*s,t)^*\omega(t^*s^*,s)^*\delta_{t^*s^*}.
\end{align*}
On the other hand,
\begin{align*}
(b\delta_t)^*\cdot (a\delta_s)^*&=\big(\beta_t\inv(b^*)\omega(t^*,t)^*\delta_{t^*}\big)\cdot\big(\beta_s\inv(a^*)\omega(s^*,s)^*\delta_{s^*}\big)\\
    &=\beta_{t^*}\Big(\beta_{t^*}\inv\big(\beta_t\inv(b^*)\omega(t^*,t)^*\big)\beta_s\inv(a^*)\omega(s^*,s)^*\Big)\omega(t^*,s^*)\delta_{t^*s^*}\\
    &=\beta_{t^*}\Big(\beta_{t^*}\inv\big(\omega(t^*,t)^*\beta_{t^*}(b^*)\big)\beta_s\inv(a^*)\omega(s^*,s)^*\Big)\omega(t^*,s^*)\delta_{t^*s^*}=\ldots
\end{align*}
Let $(u_i)$ be an approximate unit for $\D_t$ and define $y\defeq \beta_{t^*}(b^*)\in \D_{t^*}$ and
$z\defeq\beta_s\inv(a^*)\omega(s^*,s)^*\in \D_{s^*}$. Then
\begin{align*}
\ldots&=\beta_{t^*}\Big(\beta_{t^*}\inv\big(\omega(t^*,t)^*y\big)z\Big)\omega(t^*,s^*)\delta_{t^*s^*}\\
      &=\lim\limits_{i}\beta_{t^*}\Big(\beta_{t^*}\inv\big(\omega(t^*,t)^*y\big)u_iz\Big)\omega(t^*,s^*)\delta_{t^*s^*}\\
      &=\lim\limits_{i}\omega(t^*,t)^*y\beta_{t^*}(u_iz)\omega(t^*,s^*)\delta_{t^*s^*}\\
      &=\lim\limits_{i}\omega(t^*,t)^*\beta_{t^*}(b^*u_iz)\omega(t^*,s^*)\delta_{t^*s^*}\\
      &=\omega(t^*,t)^*\beta_{t^*}(b^*z)\omega(t^*,s^*)\delta_{t^*s^*}\\
      &=\omega(t^*,t)^*\beta_{t^*}\big(b^*\beta_s\inv(a^*)\omega(s^*,s)^*\big)\omega(t^*,s^*)\delta_{t^*s^*}=\ldots
\end{align*}
Using Lemma~\ref{lem:ConsequencesDefTwistedAction}\eqref{lem:ConsequencesDefTwistedAction:item:CocycleConditionWith*} again, the above equals
\begin{align*}
\ldots&=\omega(t^*,t)^*\beta_{t^*}\big(b^*\beta_s\inv(a^*)\big)\omega(t^*,s^*s)\omega(t^*s^*,s)^*\omega(t^*,s^*)^*\omega(t^*,s^*)\delta_{t^*s^*}\\
    &=\beta_t\inv\big(b^*\beta_s\inv(a^*)\big)\omega(t^*,t)^*\omega(t^*,s^*s)\omega(t^*s^*,s)^*\delta_{t^*s^*}.
\end{align*}
We conclude that $\big((a\delta_s)\cdot(b\delta_t)\big)^*=(b\delta_t)^*\cdot (a\delta_s)^*$ if and only if
\begin{equation*}
c\,\omega(t^*s^*s,t)^*\omega(t^*s^*,s)^*=c\,\omega(t^*,t)^*\omega(t^*,s^*s)\omega(t^*s^*,s)^*,
\end{equation*}
where $c\defeq \beta_t\inv(b^*\beta_s\inv(a^*))\in\D_{t^*s^*}$. Multiplying the above equation on the right by $\omega(t^*s^*,s)\in\U\mult(\D_{t^*s^*})$ and taking adjoints, we see that it is equivalent to
\begin{equation*}
\omega(t^*,s^*s)\omega(t^*s^*s,t)c^*=\omega(t^*,t)c^*.
\end{equation*}
And this last equation is a consequence of axiom~\eqref{def:twisted action:item:omega(s*,e)omega(s*e,s)x=omega(s*,s)x}
in Definition~\ref{def:twisted action}. Therefore the involution on $\B$ is
anti-multiplicative. It is easy to see that the involution is isometric, that is, $\|(a\delta_s)^*\|=\|a\delta_s\|$ for all $a\in \D_s$.
Thus, we have checked axiom~\eqref{def:Fell bundles over ISG:item:InvolutionIdempotentIsometricAntiMultiplicative}
in Definition~\ref{def:Fell bundles over ISG}.

To prove axiom~\eqref{def:Fell bundles over ISG:item:C*-Condition}
in Definition~\ref{def:Fell bundles over ISG}, take $s\in S$ and $a\in \D_s$. Then
\begin{align*}
(a\delta_s)^*\cdot(a\delta_s)&=\big(\omega(s^*,s)^*\beta_{s^*}(a^*)\delta_{s^*}\big)\cdot (a\delta_s)\\
                             &=\beta_{s^*}\Big(\beta_{s^*}\inv\big(\omega(s^*,s)^*\beta_{s^*}(a^*)\big)a\Big)\omega(s^*,s)\delta_{s^*s}\\
                             &=\beta_{s^*}\big(\beta_{s^*}\inv(\omega(s^*,s)^*)a^*a\big)\omega(s^*,s)\delta_{s^*s}\\
                             &=\omega(s^*,s)^*\beta_{s^*}(a^*a)\omega(s^*,s)\delta_{s^*s}.
\end{align*}
Now note that $\omega(s^*,s)^*\beta_{s^*}(a^*a)\omega(s^*,s)$ is a positive element of $\D_{s^*s}$ and its norm equals $\|a^*a\|=\|a\|^2$.

In order to prove axiom~\eqref{def:Fell bundles over ISG:item:CompatibilityInclusions}
in Definition~\ref{def:Fell bundles over ISG}, take $r,s,t\in S$ with $r\leq s\leq t$ and let $x\in \D_r$.
Then we have
\begin{align*}
(j_{t,s}\circ j_{s,r})(a\delta_r)&=j_{t,s}(j_{s,r}(a\delta_r))=j_{t,s}(a\omega(s,r^*r)^*\delta_s)\\
                                 &=a\omega(s,r^*r)^*\omega(r,s^*s)^*\delta_t\\
                                 &=(\omega(r,s^*s)\omega(s,r^*r)a^*)^*\delta_t=a\omega(t,r^*r)^*\delta_t,
\end{align*}
where in the last equation we have used
Lemma~\ref{lem:ConsequencesDefTwistedAction}\eqref{lem:ConsequencesDefTwistedAction:item:omega(t,r*r)x=omega(t,s*s)omega(s,r*r)x}.
Next, let us check that $j_{t,s}(a\delta_s)^*=j_{t^*,s^*}\big((a\delta_s)^*\big)$ for all $a\in \D_s$ and $s\leq t$ in $S$. We have
\begin{multline}\label{eq:PartCalculationInvolution}
j_{s,t}(a\delta_s)^*=\big(a\omega(t,s^*s)^*\delta_t\big)^*\\=\omega(t^*,t)^*\beta_{t^*}(\omega(t,s^*s)a^*)\delta_{t^*}
            =\omega(t^*,s)^*\beta_{t^*}(a^*)\delta_{t^*},
\end{multline}
where the last equation follows from the identity
\begin{equation}\label{eq:identityConseq.CocycleCondition}
\beta_{t^*}(a)\omega(t^*,s)=\beta_{t^*}(a\omega(t,s^*s)^*)\omega(t^*,t)
\end{equation}
which in turn is a consequence of axiom~\eqref{def:twisted action:item:CocycleCondition}
in Definition~\ref{def:twisted action}. In fact, by Definition~\ref{def:twisted action}\eqref{def:twisted action:item:CocycleCondition}, we have
\begin{equation*}
\beta_{t^*}(a\omega(t,s^*s))\omega(t^*,s)=\beta_{t^*}(a)\omega(t^*,t)\omega(t^*t,s^*s)=\beta_{t^*}(a)\omega(t^*,t)1_{s^*s}.
\end{equation*}
By Lemma~\ref{lem:ConsequencesDefTwistedAction}\eqref{lem:ConsequencesDefTwistedAction:item:beta_t restriction=...}, $\beta_{t^*}(a)\in \D_{s^*s}$ so that $\beta_{t^*}(a)\omega(t^*,t)1_{s^*s}=\beta_{t^*}(a)\omega(t^*,t)$. Replacing $a$ by $a\omega(t,s^*s)^*$, this yields~\eqref{eq:identityConseq.CocycleCondition}
and hence also~\eqref{eq:PartCalculationInvolution}. On the other hand, using again Equation~\eqref{eq:OtherFormulaForInvolution}
and Lemma~\ref{lem:ConsequencesDefTwistedAction}\eqref{lem:ConsequencesDefTwistedAction:item:beta_t restriction=...},
\begin{align*}\label{eq:SecondPartCalculationInvolution}
j_{t^*,s^*}\big((a\delta_s)^*\big)&=j_{t^*,s^*}\big(\omega(s^*,s)^*\beta_{s^*}(a^*)\delta_{s^*}\big)\\
                                  &=\omega(s^*,s)^*\beta_{s^*}(a^*)\omega(t^*,ss^*)^*\delta_{t^*}\\
                                  &=\omega(s^*,s)^*\omega(t^*,ss^*)^*\beta_{t^*}(a^*)\omega(t^*,ss^*)\omega(t^*,ss^*)^*\delta_{t^*}\\
                                  &=\omega(s^*,s)^*\omega(t^*,ss^*)^*\beta_{t^*}(a^*)\delta_{t^*}.
\end{align*}
Comparing this last equation with~\eqref{eq:PartCalculationInvolution}, we see that they are equal by
Lemma~\ref{lem:ConsequencesDefTwistedAction}\eqref{lem:ConsequencesDefTwistedAction:item:omega(t*,s)=...sLEQt}.
This proves axiom~\eqref{def:Fell bundles over ISG:item:CompatibilityInvolutionWithInclusions} in Definition~\ref{def:Fell bundles over ISG}.

To prove the missing axiom~\eqref{def:Fell bundles over ISG:item:CompatibilityMultiplicationWithInclusions}
in Definition~\ref{def:Fell bundles over ISG}, we shall use Lemma~\ref{lem:TechnicalLemmaDefFellBundle} proving
that $a\delta_s\cdot j_{v,u}(b\delta_u)=j_{sv,su}(a\delta_s\cdot b\delta_u)$ for all $s,v,u\in S$ with $u\leq v$, $a\in \D_s$ and $b\in \D_u$.
Defining $c=\beta_s\inv(a)b\in \D_{s^*}\cap\D_u$ and using Lemma~\ref{lem:ConsequencesDefTwistedAction}\eqref{lem:ConsequencesDefTwistedAction:item:CocycleConditionWith*}, we get
\begin{align*}
a\delta_s\cdot j_{v,u}(b\delta_u)&=(a\delta_s)\cdot (b\omega(v,u^*u)^*\delta_v)\\
                                 &=\beta_{s}\big(\beta_s\inv(a)b\omega(v,u^*u)^*\big)\omega(s,v)\delta_{sv}\\
                                 &=\beta_{s}\big(c\,\omega(v,u^*u)^*\big)\omega(s,v)\delta_{sv}\\
                                 &=\beta_{s}(c)\omega(s,vu^*u)\omega(sv,u^*u)^*\delta_{sv}\\
                                 &=\beta_{s}(c)\omega(s,u)\omega(sv,u^*u)^*\delta_{sv},
\end{align*}
where in the last equation we have used that $u\leq v$ so that $vu^*u=u$. On the other hand,
\begin{align*}
j_{sv,su}(a\delta_s\cdot b\delta_u)&=j_{sv,su}\big(\beta_s(\beta_s\inv(a)b)\omega(s,u)\delta_{su}\big)\\
                                   &=\beta_s(c)\omega(s,u)\omega(sv,u^*s^*su)^*\delta_{sv}
\end{align*}
Thus, to see that $a\delta_s\cdot j_{v,u}(b\delta_u)=j_{sv,su}(a\delta_s\cdot b\delta_u)$, it is enough to check
the equality $\omega(sv,u^*u)=\omega(sv,u^*s^*su)$. Since $u\leq v$, we have $v^*s^*svu^*u=v^*s^*su=v^*s^*suu^*u=v^*uu^*s^*su=u^*s^*su$.
Using Lemma~\ref{lem:ConsequencesDefTwistedAction}\eqref{lem:ConsequencesDefTwistedAction:item:omega(s,e)=w(s,s*se)...}, we conclude that
\begin{equation*}
\omega(sv,u^*u)=\omega(sv,v^*s^*svu^*u)=\omega(sv,u^*s^*su).
\end{equation*}
Therefore $\B$ is a Fell bundle over $S$. Note that $\B$ is saturated because the element $\beta_s(\beta_s\inv(a)b)\omega(s,t)$
appearing in the product
\begin{equation*}
(a\delta_s)\cdot (b\delta_t)=\beta_s(\beta_s\inv(a)b)\omega(s,t)\delta_{st}
\end{equation*}
is an arbitrary element of $\D_{st}$. In fact, $\beta_s\inv(a)b$ for $a\in \D_s$ and $b\in \D_t$ defines an arbitrary element of $\D_{s^*}\cap \D_t$,
and by Lemma~\ref{lem:ConsequencesDefTwistedAction}\eqref{lem:ConsequencesDefTwistedAction:item:beta_r(D_r* inter D_s)=D_rs}, $\beta_s(\D_{s^*}\cap\D_t)=\D_{st}$. The conclusion follows because
$\omega(s,t)$ is a unitary multiplier of $\D_{st}$. Finally, we leave the reader to check that $\B$ is regular with respect to the unitary multipliers
$v_s\in \mult(\B_s)$ defined by
\begin{equation}\label{eq:definitionOfu_s}
v_s\cdot (a\delta_{s^*s})\defeq \beta_s(a)\delta_s\quad\mbox{for all }a\in \D_{s^*s}.
\end{equation}
Here $\mult(\B_s)=\Ls(\B_{s^*s},\B_s)$ denotes the multiplier of $\B_s$ considered as an imprimitivity
Hilbert $\B_{ss^*},\B_{s^*s}$-bimodule.
Observe that, formally, we have $v_s=\delta_s=1_{ss^*}\delta_s$.
Recall that $1_{e}$ denotes the unit of the multiplier algebra of $\D_e$.
\end{proof}

Summarizing our results, we have shown that there is a correspondence between twisted actions and regular, saturated Fell bundles.
Given a twisted action $(\beta,\omega)$ of $S$ on a \cstar{}algebra $B$, we have constructed above a regular, saturated Fell
bundle $\B$. Moreover, the original twisted action $(\beta,\omega)$ can be recovered from the Fell bundle $\B$ using the
unitary multipliers $v_s=1_{ss^*}\delta_s\in\mult(\B_s)$ defined by Equation~\eqref{eq:definitionOfu_s}. More precisely, starting
with $\B$ and the unitary multipliers $v_s$, and proceeding as in Section~\ref{sec:regular Fell bundles}, we get a
twisted action $(\tilde\beta,\tilde\omega)$ of $S$ on the \cstar{}algebra $C^*(\E)$, where $\E=\{\B_e\}_{e\in E}$
is the restriction of $\B$ to $E=E(S)$. The \cstar{}algebra $\B_e=\D_e\delta_e$ is canonically isomorphic to the ideal $\D_e\sbe B$
and the sum of these ideals is dense in $B$. By Proposition~4.3 in \cite{Exel:noncomm.cartan}, this yields a canonical
isomorphism $C^*(\E)\cong B$ extending the isomorphisms $\B_e\cong \D_e$. Under these isomorphisms, $\tilde\beta_s\colon\B_{s^*s}\to \B_{ss^*}$
corresponds to $\beta_s\colon\D_{s^*s}\to \D_{ss^*}$ and the unitary
multipliers $\tilde\omega(s,t)\in \U\mult(\B_{stt^*s^*})$ correspond to $\omega(s,t)\in \U\mult(\D_{st})=\U\mult(\D_{stt^*s^*})$.
In fact, first note that (by Equation~\eqref{eq:DefInvolutionFellBundleFromTwistedAction})
\begin{equation*}
v_s^*=(1_{ss^*}\delta_s)^*=\omega(s^*,s)^*\delta_{s^*}=\omega(s^*,s)^*1_{s^*s}\delta_{s^*}=\omega(s^*,s)^*v_{s^*}\in \mult(\B_{s^*}).
\end{equation*}
Thus, using Lemma~\ref{lem:ConsequencesDefTwistedAction}\eqref{lem:ConsequencesDefTwistedAction:item:beta_s(omega(s*,s))=omega(s,s*)}, we get
\begin{align*}
\tilde\beta_s(a\delta_{s^*s})&=v_s\cdot a\delta_{s^*s}\cdot v_s^*=(1_{ss^*}\delta_s)\cdot(a\delta_{s^*s})\cdot(\omega(s^*,s)^*\delta_{s^*})\\
&=(\beta_s(a)\delta_s)\cdot(\omega(s^*,s)^*\delta_{s^*})=\beta_s(a\omega(s^*,s)^*)\omega(s,s^*)\delta_{ss^*}\\
&=\beta_s(a)\omega(s,s^*)^*\omega(s,s^*)\delta_{ss^*}=\beta_s(a)\delta_{ss^*}.
\end{align*}
and (using that $\beta_s(1_{s^*s}1_e)=1_{ses^*}$ for all $e\in E(S)$)
\begin{align*}
\tilde\omega(s,t)&=v_su_tu_{st}^*=(1_{ss^*}\delta_s)\cdot (1_{tt^*}\delta_t)\cdot(1_{stt^*s^*}\delta_{st})\\
                &=\big(\beta_s(\beta_s\inv(1_{ss^*})1_{tt^*})\omega(s,t)\delta_{st}\big)\cdot \big(\omega(t^*s^*,st)^*\delta_{t^*s^*}\big)\\
                &=\big(1_{stt^*s^*}\omega(s,t)\delta_{st}\big)\cdot \big(\omega(t^*s^*,st)^*\delta_{t^*s^*}\big)\\
                &=\beta_{st}\big(\beta_{st}\inv(\omega(s,t))\omega(t^*s^*,st)^*\big)\omega(st,t^*s^*)\delta_{stt^*s^*}\\
                &=\omega(s,t)\omega(st,t^*s^*)^*\omega(st,t^*s^*)\delta_{stt^*s^*}=\omega(s,t)\delta_{stt^*s^*}.\\
\end{align*}
This shows that the twisted action $(\tilde\beta,\tilde\omega)$ of $S$ on $C^*(\E)$ is isomorphic to the
twisted action $(\beta,\omega)$ of $S$ on $B$. Now let us assume that $(\beta,\omega)$ already comes from a
regular, saturated Fell bundle $\A=\{\A_s\}_{s\in S}$ as in Section~\ref{sec:regular Fell bundles} for some family
$u=\{u_s\}_{s\in S}$ of unitary multipliers $u_s\in\mult(\A_s)$ with $u_e=1_e$ for all $e\in E(S)$. In other words,
we are assuming that $\beta_s(a)=u_sau_s^*$ and $\omega(s,t)=u_su_tu_{st}^*$. Then it is not difficult to see that
the map $\B\to\A$ given on the fibers $\B_s\to \A_s$ by $a\delta_s\mapsto au_s$ is an isomorphism of Fell bundles $\B\cong \A$.
Moreover, under this isomorphism, the unitary multiplier $v_s\in \mult(\B_s)$ corresponds to $u_s\in \mult(\A_s)$.
All this together essentially proves the following result:

\begin{corollary}\label{cor:CorrespondenceRegularFellBundlesAndTwistedActions}
There is a bijective correspondence between isomorphism classes of twisted actions $(\beta,\omega)$ of $S$
and isomorphism classes of pairs $(\A,u)$ consisting of regular, saturated Fell bundles $\A$ over $S$ and families $u=\{u_s\}_{s\in S}$ of
unitary multipliers $u_s\in \mult(\A_s)$ satisfying $u_e=1_e$ for all $e\in E(S)$.
\end{corollary}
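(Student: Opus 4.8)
The plan is to present the two constructions already carried out as mutually inverse assignments and then check that each respects isomorphisms, so that they descend to the stated bijection of isomorphism classes. Let $\Phi$ send a twisted action $(\beta,\omega)$ of $S$ on $B$ to the pair $(\B,v)$, where $\B$ is the regular, saturated Fell bundle of Theorem~\ref{theo:FellBundleFromTwistedAction} and $v=\{v_s\}_{s\in S}$ is the family of unitary multipliers $v_s=1_{ss^*}\delta_s\in\mult(\B_s)$ from Equation~\eqref{eq:definitionOfu_s}; note $v_e=1_e$ by construction. Conversely, let $\Psi$ send a pair $(\A,u)$ to the twisted action $\bigl(\{\D_s\},\{\beta_s\},\{\omega(s,t)\}\bigr)$ on $B=C^*(\E)$, with $\E=\A|_E$, $\D_s=\A_s\A_s^*$, $\beta_s=\Ad_{u_s}$ and $\omega(s,t)=u_su_tu_{st}^*$; this is indeed a twisted action because Proposition~\ref{prop:properties of twisted action coming from partial isometries} verifies precisely the axioms of Definition~\ref{def:twisted action}.

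Next I would fix the two notions of isomorphism. An isomorphism between twisted actions $(\beta,\omega)$ on $B$ and $(\beta',\omega')$ on $B'$ is a \Star{}isomorphism $\phi\colon B\to B'$ with $\phi(\D_s)=\D'_s$, $\phi\circ\beta_s=\beta'_s\circ\phi$ on $\D_{s^*}$, and $\bar\phi(\omega(s,t))=\omega'(s,t)$, where $\bar\phi$ is the extension to multiplier algebras; an isomorphism between pairs $(\A,u)$ and $(\A',u')$ is an isomorphism of Fell bundles carrying $u_s$ to $u'_s$ for all $s\in S$. A direct check shows that $\Phi$ and $\Psi$ each carry isomorphic objects to isomorphic objects, so both descend to maps on isomorphism classes.

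The heart of the matter is that $\Psi\circ\Phi$ and $\Phi\circ\Psi$ are the identity up to isomorphism, and this is exactly what the computations immediately preceding the statement establish. For $\Psi\circ\Phi$: starting from $(\beta,\omega)$ and forming $(\B,v)$, the canonical isomorphism $C^*(\E)\cong B$ of Proposition~4.3 in \cite{Exel:noncomm.cartan} (extending the identifications $\B_e=\D_e\delta_e\cong\D_e$) intertwines the recovered data with the original one, as recorded by the displayed equalities $\tilde\beta_s(a\delta_{s^*s})=\beta_s(a)\delta_{ss^*}$ and $\tilde\omega(s,t)=\omega(s,t)\delta_{stt^*s^*}$. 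For $\Phi\circ\Psi$: starting from $(\A,u)$ and applying $\Psi$ then $\Phi$, the fiberwise map $\B_s\to\A_s$, $a\delta_s\mapsto au_s$, is an isomorphism of Fell bundles sending $v_s=1_{ss^*}\delta_s$ to $u_s$.

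I expect the main obstacle to be bookkeeping rather than a conceptual difficulty, concentrated in the last step: one must check that $a\delta_s\mapsto au_s$ is compatible not only with the product and involution but also with the inclusion maps, i.e.\ that it carries $j_{t,s}$ on $\B$ into the genuine inclusion $\A_s\sbe\A_t$ in $\bound(\hils)$. Since $j_{t,s}(a\delta_s)=a\omega(t,s^*s)^*\delta_t$ and $u_su_t^*=\omega(t,s^*s)^*$ (the identity that motivated Equation~\eqref{eq:DefInclusionsFellBundleFromTwistedAction}), one has $au_s=(a\omega(t,s^*s)^*)u_t$, so the map is well defined and commutes with the inclusions by construction; compatibility with \eqref{eq:DefProductFellBundleFromTwistedAction} and \eqref{eq:DefInvolutionFellBundleFromTwistedAction} is then immediate from $\beta_s=\Ad_{u_s}$ and $\omega(s,t)=u_su_tu_{st}^*$.
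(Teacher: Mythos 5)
Your proposal is correct and follows essentially the same route as the paper: the forward map is Theorem~\ref{theo:FellBundleFromTwistedAction} with the multipliers $v_s=1_{ss^*}\delta_s$, the backward map is the construction of Section~\ref{sec:regular Fell bundles} validated by Proposition~\ref{prop:properties of twisted action coming from partial isometries}, and the two composites are identified via the computations of $\tilde\beta$, $\tilde\omega$ and the fiberwise isomorphism $a\delta_s\mapsto au_s$, exactly as in the discussion the paper offers in lieu of a formal proof. Your added attention to the compatibility of $a\delta_s\mapsto au_s$ with the inclusion maps, via $u_su_t^*=\omega(t,s^*s)^*$, is precisely the identity the paper uses to motivate Equation~\eqref{eq:DefInclusionsFellBundleFromTwistedAction}, so nothing is missing.
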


An isomorphism of pairs $(\B,v)\cong (\A,u)$ has the obvious meaning: it is an isomorphism $\B\cong \A$ of the Fell bundles
under which $v_s\in \mult(\B_s)$ corresponds to $u_s\in \mult(\A_s)$ for all $s\in S$.

As already observed in Section~\ref{sec:regular TROs}, imprimitivity bimodules over $\sigma$\nb-unital stable \cstar{}algebras are automatically regular. This immediately implies the following:

\begin{corollary}
Let $\A=\{\A_s\}_{s\in S}$ be a saturated Fell bundle for which all the fibers $\A_e$ with $e\in E(S)$ are
$\sigma$\nb-unital and stable. Then $\A$ is isomorphic to a Fell bundle associated to some twisted action of $S$.
\end{corollary}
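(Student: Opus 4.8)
The plan is to deduce from the hypotheses that $\A$ is a regular, saturated Fell bundle, and then to quote Corollary~\ref{cor:CorrespondenceRegularFellBundlesAndTwistedActions}.

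First I would note that, since $\A$ is saturated, each fibre $\A_s$ is an imprimitivity Hilbert $\A_{ss^*},\A_{s^*s}$\nb-bimodule: as recorded in the Remark following the definition of (local) regularity, one has $\A_s\A_s^*=\A_{ss^*}$ and $\A_s^*\A_s=\A_{s^*s}$ for a saturated bundle. The crucial observation is that both coefficient algebras are fibres over the idempotents $ss^*$ and $s^*s$, and hence are $\sigma$\nb-unital and stable by hypothesis. I would then invoke the regularity criterion recalled earlier (a consequence of \cite[Theorem~3.4]{BGR:MoritaEquivalence}): an imprimitivity Hilbert bimodule whose coefficient algebras are $\sigma$\nb-unital and stable is automatically regular. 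Applying this fibrewise shows that every $\A_s$ is regular, so $\A$ is a regular, saturated Fell bundle.

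It remains to select a family $u=\{u_s\}_{s\in S}$ of unitary multipliers $u_s\in\U\mult(\A_s)$ with $u_e=1_e$ for every $e\in E(S)$. For an idempotent $e$ the algebra $\A_e$ is the trivial (hence regular) $\A_e,\A_e$\nb-bimodule, of which $1_e=1_{\mult(\A_e)}$ is a unitary multiplier, so the prescription $u_e=1_e$ is legitimate; for the remaining $s$, regularity of $\A_s$ furnishes at least one $u_s\in\U\mult(\A_s)$, and I would simply choose one. The resulting pair $(\A,u)$ is exactly of the form classified by Corollary~\ref{cor:CorrespondenceRegularFellBundlesAndTwistedActions}, which therefore produces a twisted action $(\beta,\omega)$ of $S$ whose associated Fell bundle, constructed in Theorem~\ref{theo:FellBundleFromTwistedAction}, is isomorphic to $\A$, as required.

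The substance of the argument is really the single observation that saturation makes the coefficient algebras of each fibre coincide with fibres over idempotents, so that the stability and $\sigma$\nb-unitality assumptions apply uniformly and the Brown--Green--Rieffel regularity result can be used fibrewise. There is no genuine obstacle beyond this: the choice of the $u_s$ for non-idempotent $s$ is free and for idempotents canonical, and all compatibility among the different fibres is automatically encoded in the $\beta_s$ and $\omega(s,t)$ manufactured by the correspondence, so nothing further needs to be verified by hand.
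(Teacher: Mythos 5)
Your proposal is correct and follows essentially the same route the paper intends: the paper derives this corollary immediately from the observation (recorded at the end of Section~2, via \cite[Theorem~3.4]{BGR:MoritaEquivalence}) that imprimitivity bimodules over $\sigma$\nb-unital stable \cstar{}algebras are regular, combined with saturation identifying the coefficient algebras of $\A_s$ with the fibers $\A_{ss^*}$ and $\A_{s^*s}$, and then applies Corollary~\ref{cor:CorrespondenceRegularFellBundlesAndTwistedActions}. Your additional care in choosing $u_e=1_e$ matches the convention already built into that correspondence, so nothing is missing.
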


\begin{remark}
Let $\E$ be the restriction of $\A$ to $E(T)$. If $B=C^*(\E)$ is stable, then so are the fibers $\A_e$ for all $e\in E(S)$ because
each $\A_e$ may be viewed as an ideal of $B$. The converse is not clear and is related to Question~2.6 proposed in \cite{Rordam:StableExtensions}
of whether the sum of stable ideals in a \cstar{}algebra is again stable.
\end{remark}

\section{Relation to Sieben's twisted actions}
\label{sec:RelationToSiebensTwistedActions}

In this section we are going to see that our notion of twisted action generalizes Sieben's definition appearing in \cite{SiebenTwistedActions}.

\begin{proposition}\label{prop:SiebensTwistedActions}
Let $S$ be an inverse semigroup, let $B$ be a \cstar{}algebra, and consider the data $\bigl(\{\D_s\}_{s\in S},\{\beta_s\}_{s\in S},\{\omega(s,t)\}_{s,t\in S}\bigr)$ satisfying~\eqref{def:twisted action:item:beta_r beta_s=Ad_omega(r,s)beta_rs} and
~\eqref{def:twisted action:item:CocycleCondition}
as in Definition~\ref{def:twisted action}, and in addition assume that
\begin{equation}\label{eq:SiebenAxiomTwistedAction}
\omega(s,e)=1_{se}\quad\mbox{and}\quad\omega(e,s)=1_{es}\quad\mbox{for all }s\in S\mbox{ and }e\in E(S).
\end{equation}
Then $\bigl(\{\D_s\}_{s\in S},\{\beta_s\}_{s\in S},\{\omega(s,t)\}_{s,t\in S}\bigr)$ is a twisted action of $S$ on $B$.
\end{proposition}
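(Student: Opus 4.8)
The plan is to verify the two axioms~\eqref{def:twisted action:item:omega(r,r*r)=omega(e,f)=1_ef_and_omega(rr*,r)=1} and~\eqref{def:twisted action:item:omega(s*,e)omega(s*e,s)x=omega(s*,s)x} of Definition~\ref{def:twisted action}, as~\eqref{def:twisted action:item:beta_r beta_s=Ad_omega(r,s)beta_rs} and~\eqref{def:twisted action:item:CocycleCondition} are assumed outright. Axiom~\eqref{def:twisted action:item:omega(r,r*r)=omega(e,f)=1_ef_and_omega(rr*,r)=1} is immediate from~\eqref{eq:SiebenAxiomTwistedAction}: taking both entries idempotent gives $\omega(e,f)=1_{ef}$, while $\omega(r,r^*r)=1_{r\cdot r^*r}=1_r$ and $\omega(rr^*,r)=1_{rr^*\cdot r}=1_r$ follow by taking the idempotent entry to be $r^*r$, respectively $rr^*$. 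So the whole content of the proposition lies in axiom~\eqref{def:twisted action:item:omega(s*,e)omega(s*e,s)x=omega(s*,s)x}.

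Before attacking it I would record the preliminary facts I need. With~\eqref{def:twisted action:item:beta_r beta_s=Ad_omega(r,s)beta_rs},~\eqref{def:twisted action:item:CocycleCondition} and the just-proved identity $\omega(e,f)=1_{ef}$ in hand, the proofs of the first parts of Lemma~\ref{lem:ConsequencesDefTwistedAction} (namely~\eqref{lem:ConsequencesDefTwistedAction:item:Ds=Dss*},~\eqref{lem:ConsequencesDefTwistedAction:item:beta_e=identity},~\eqref{lem:ConsequencesDefTwistedAction:item:beta_r(D_r* inter D_s)=D_rs} and~\eqref{lem:ConsequencesDefTwistedAction:item:D_r sbe D_s}) go through verbatim, since they use only these ingredients; I will simply quote them. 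The element $s^*es$ is idempotent, being the range idempotent $(s^*e)(s^*e)^*$ of $s^*e$, so Lemma~\ref{lem:ConsequencesDefTwistedAction}\eqref{lem:ConsequencesDefTwistedAction:item:Ds=Dss*} applied to $s^*e$ gives $\D_{s^*es}=\D_{s^*e}$; moreover $es\leq s$ yields $\D_{es}\subseteq\D_s$, so $\D_s\cap\D_{es}=\D_{es}$.

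The heart of the matter is to apply the cocycle condition~\eqref{def:twisted action:item:CocycleCondition} twice, using~\eqref{eq:SiebenAxiomTwistedAction} each time to kill every cocycle with an idempotent entry. First, with $(r,s,t)=(s^*,e,s)$ and $y\in\D_{es}$, the identity $\omega(e,s)=1_{es}$ reduces~\eqref{def:twisted action:item:CocycleCondition} to
\begin{equation*}
\beta_{s^*}(y)\,\omega(s^*,es)=\beta_{s^*}(y)\,\omega(s^*,e)\,\omega(s^*e,s).
\end{equation*}
Second, with $(r,s,t)=(s^*,s,s^*es)$ and the same $y$, the relations $s\cdot s^*es=es$, $\omega(s,s^*es)=1_{es}$ and $\omega(s^*s,s^*es)=1_{s^*es}$ reduce~\eqref{def:twisted action:item:CocycleCondition} to
\begin{equation*}
\beta_{s^*}(y)\,\omega(s^*,es)=\beta_{s^*}(y)\,\omega(s^*,s).
\end{equation*}
As $y$ ranges over $\D_{es}$, the element $\beta_{s^*}(y)$ ranges over $\beta_{s^*}(\D_s\cap\D_{es})=\D_{s^*es}$ by Lemma~\ref{lem:ConsequencesDefTwistedAction}\eqref{lem:ConsequencesDefTwistedAction:item:beta_r(D_r* inter D_s)=D_rs}. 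Since $\D_{s^*es}$ is a \cstar{}algebra and hence an equation $z\,m_1=z\,m_2$ holding for all $z$ in it forces $m_1=m_2$ in $\mult(\D_{s^*es})$, comparing the two displays gives $\omega(s^*,e)\,\omega(s^*e,s)=\omega(s^*,s)$ in $\mult(\D_{s^*es})$, which upon right multiplication by $x\in\D_{s^*es}$ is exactly~\eqref{def:twisted action:item:omega(s*,e)omega(s*e,s)x=omega(s*,s)x}.

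I expect the main obstacle to be the second cocycle application, and in particular the clairvoyant choice of the idempotent $t=s^*es$ (so that $st=es$): this is precisely what lets~\eqref{eq:SiebenAxiomTwistedAction} collapse the right-hand cocycles and manufacture the target $\omega(s^*,s)$. The accompanying bookkeeping—verifying $\D_{s^*es}=\D_{s^*e}$ so that $\omega(s^*,e)$ really acts on the relevant ideal, and that $\beta_{s^*}$ carries $\D_{es}$ onto $\D_{s^*es}$—is what legitimizes passing from the two pointwise identities to the equality of multipliers, and so must be checked carefully rather than waved through.
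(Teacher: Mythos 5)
Your proof is correct, and at the top level it follows the same reduction as the paper: axiom~\eqref{def:twisted action:item:omega(r,r*r)=omega(e,f)=1_ef_and_omega(rr*,r)=1} is immediate from~\eqref{eq:SiebenAxiomTwistedAction}, and axiom~\eqref{def:twisted action:item:omega(s*,e)omega(s*e,s)x=omega(s*,s)x} is reduced to the identity $\omega(s^*,e)\omega(s^*e,s)=\omega(s^*,es)=\omega(s^*,s)$ on $\D_{s^*es}$. The genuine difference is in how that identity is obtained: the paper simply quotes properties (i) and (j) of Lemma~2.3 of \cite{SiebenTwistedActions} for the chain $\omega(s^*,s)1_{s^*es}=\omega(s^*e,s)=\omega(s^*,es)$, whereas you derive it from scratch by two applications of the cocycle condition~\eqref{def:twisted action:item:CocycleCondition} with the triples $(s^*,e,s)$ and $(s^*,s,s^*es)$, using Sieben's condition to kill every cocycle with an idempotent entry, and then upgrade the resulting pointwise identities to an equality of multipliers via $\beta_{s^*}(\D_{es})=\D_{s^*es}$. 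Your supporting bookkeeping is sound: the computation $s\cdot s^*es=es$, the identification $\D_{s^*e}=\D_{s^*es}$, and the observation that the quoted parts of Lemma~\ref{lem:ConsequencesDefTwistedAction} use only axioms~\eqref{def:twisted action:item:beta_r beta_s=Ad_omega(r,s)beta_rs},~\eqref{def:twisted action:item:CocycleCondition} and $\omega(e,f)=1_{ef}$ (so there is no circularity) are all correct. What your route buys is a self-contained argument within the paper's own axiom system; what it costs is the extra page of verification that the paper outsources to Sieben.
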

\begin{proof}
If \eqref{eq:SiebenAxiomTwistedAction} holds, then axiom~\eqref{def:twisted action:item:omega(r,r*r)=omega(e,f)=1_ef_and_omega(rr*,r)=1} in Definition~\ref{def:twisted action} is trivially satisfied.
And axiom~\eqref{def:twisted action:item:omega(s*,e)omega(s*e,s)x=omega(s*,s)x} will follow from the following relation:
\begin{equation}\label{eq:SiebenAxiomConsequences}
\omega(s^*,s)1_{s^*es}=\omega(s^*e,s)=\omega(s^*,es)
\end{equation}
for all $s\in S$ and $e\in E(S)$. This is a consequence of properties (i) and (j) of Lemma~2.3 in \cite{SiebenTwistedActions}.
Now, the first equation in \eqref{eq:SiebenAxiomConsequences} immediately
implies~\eqref{def:twisted action:item:omega(s*,e)omega(s*e,s)x=omega(s*,s)x} in Definition~\ref{def:twisted action} because
$\omega(s^*,e)=1_{s^*e}=1_{s^*es}$.
\end{proof}

\begin{definition}\label{def:SiebensTwistedAction}
A twisted action $(\beta,\omega)$ satisfying \emph{Sieben's condition}~\eqref{eq:SiebenAxiomTwistedAction}
is called a \emph{Sieben's twisted action} (see \cite[Definition~2.2]{SiebenTwistedActions}).
\end{definition}

The following result gives conditions on a twisted action coming from a regular Fell bundle to be a Sieben's twisted action.

\begin{proposition}\label{prop:CoherentHomogeneous}
Let $\A=\{\A_s\}_{s\in S}$ be a concrete, saturated, regular Fell bundle in $\bound(\hils)$. Given $s\in S$,
let $u_s\in \bound(\hils)$ be a partial isometry strictly associated to the {\tro} $\A_s$ such that $u_e=1_e$ for all $e\in E(S)$.
Given $s,t\in S$, we write $\omega(s,t)\defeq u_su_tu_{st}^*$.
Then the following assertions are equivalent\textup:
\begin{enumerate}[(i)]
\item $\omega(s,e)=1_{se}$ for all $s\in S$ and $e\in E(S)$;\label{prop:CoherentHomogeneous:item:omega(s,e)=1_se}
\item $u_su_e=u_{se}$ for all $s\in S$ and $e\in E(S)$;
\item $u_rx=u_sx$ for all $r,s\in S$ with $r\leq s$ and $x\in \A_{r^*r}$;
\item $u_r\leq u_s$ \textup(as partial isometries of $\bound(\hils)$\textup) for all $r,s\in S$ with $r\leq s$;
        \label{prop:CoherentHomogeneous:item:u_r leq u_s}
\item $yu_r=yu_s$ for all $r,s\in S$ with $r\leq s$ and $y\in \A_{rr^*}$;
\item $u_eu_s=u_{es}$ for all $s\in S$ and $e\in E(S)$;
\item $\omega(e,s)=1_{es}$ for all $s\in S$ and $e\in E(S)$;\label{prop:CoherentHomogeneous:item:omega(e,s)=1_es}
\end{enumerate}
\end{proposition}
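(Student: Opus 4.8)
The plan is to treat condition~\eqref{prop:CoherentHomogeneous:item:u_r leq u_s}, i.e.\ $u_r\leq u_s$ whenever $r\leq s$, as the hub and to show that each of the other six conditions is merely a reformulation of it. The starting point is Corollary~\ref{cor:RegularIFFPartialIsometry}, which gives $u_s^*u_s=1_{s^*s}$ and $u_su_s^*=1_{ss^*}$, so that the usual partial order of partial isometries takes, for $r\leq s$, the two equivalent forms $u_r\leq u_s\Leftrightarrow u_r=u_s1_{r^*r}\Leftrightarrow u_r=1_{rr^*}u_s$ (these make sense because $r\leq s$ forces $r^*r\leq s^*s$ and $rr^*\leq ss^*$, whence $1_{r^*r}\leq 1_{s^*s}$ and $1_{rr^*}\leq 1_{ss^*}$).

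First I would dispatch the two \emph{inequality} reformulations. Condition (iii) says that $u_r$ and $u_s$ agree on $\A_{r^*r}$; using an approximate unit of $\A_{r^*r}$ together with $u_r1_{r^*r}=u_r$, this is exactly $u_r=u_s1_{r^*r}$, the first form of~\eqref{prop:CoherentHomogeneous:item:u_r leq u_s}. Dually, condition (v) says $u_r$ and $u_s$ agree after left multiplication by $\A_{rr^*}$, which via $1_{rr^*}u_r=u_r$ is exactly $u_r=1_{rr^*}u_s$, the second form. Next I would match the two \emph{product} conditions to the hub through the parametrizations of $\{r:r\leq s\}$: every such $r$ equals $se$ with $e=r^*r$ and also $fs$ with $f=rr^*$. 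Since $1_{(se)^*(se)}=1_{s^*se}=1_e1_{s^*s}$ gives $u_su_e=u_s1_e=u_s1_{(se)^*(se)}$, condition (ii) becomes $u_{se}=u_s1_{(se)^*(se)}$, the first form of~\eqref{prop:CoherentHomogeneous:item:u_r leq u_s}; symmetrically, $u_fu_s=1_fu_s=1_{(fs)(fs)^*}u_s$ turns condition (vi) into the second form. This yields (ii)$\Leftrightarrow$\eqref{prop:CoherentHomogeneous:item:u_r leq u_s}$\Leftrightarrow$(vi) and (iii)$\Leftrightarrow$\eqref{prop:CoherentHomogeneous:item:u_r leq u_s}$\Leftrightarrow$(v).

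It then remains to fold in the cocycle conditions via $\omega(s,e)=u_su_eu_{se}^*$ and $\omega(e,s)=u_eu_su_{es}^*$, together with $u_{se}u_{se}^*=1_{se}$ and $u_{es}u_{es}^*=1_{es}$. The implications (ii)$\Rightarrow$\eqref{prop:CoherentHomogeneous:item:omega(s,e)=1_se} and (vi)$\Rightarrow$\eqref{prop:CoherentHomogeneous:item:omega(e,s)=1_es} are immediate. For the converse \eqref{prop:CoherentHomogeneous:item:omega(s,e)=1_se}$\Rightarrow$(ii) I would right-multiply $u_su_eu_{se}^*=1_{se}$ by $u_{se}$ to get $u_su_e1_{(se)^*(se)}=u_{se}$, and then note that the initial projection $1_{(se)^*(se)}=1_{s^*se}=1_e1_{s^*s}$ lies under $1_{s^*s}$ and is therefore absorbed, $u_su_e1_{s^*se}=u_s1_e=u_su_e$, recovering (ii). The \textbf{main obstacle} is the superficially symmetric step \eqref{prop:CoherentHomogeneous:item:omega(e,s)=1_es}$\Rightarrow$(vi). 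Right-multiplying $u_eu_su_{es}^*=1_{es}$ by $u_{es}$ gives $u_eu_s1_{(es)^*(es)}=u_{es}$, but now the initial projection is $1_{(es)^*(es)}=1_{s^*es}$, which is \emph{conjugated by $s$} and is \emph{not} of the form $1_e1_{s^*s}$, so it cannot simply be swallowed by $u_s$. Writing $u_eu_s=1_{ess^*}u_s$, the point is that $1_{s^*es}$ is precisely the initial projection of the partial isometry $1_{ess^*}u_s$; equivalently one needs the intertwining $u_s1_{s^*es}=1_{ess^*}u_s$, i.e.\ $\beta_s(\A_{s^*es})=\A_{ess^*}$. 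This is an instance of Proposition~\ref{prop:properties of twisted action coming from partial isometries}\eqref{prop:properties of twisted action coming from partial isometries:item:beta_r(D_r*inter D_s)=D_rs}, since $\beta_s(\D_{s^*}\cap\D_{s^*es})=\D_{ss^*es}=\D_{ess^*}$; granting it, $u_eu_s1_{s^*es}=1_{ess^*}u_s1_{s^*es}=1_{ess^*}u_s=u_eu_s$ and (vi) follows. This asymmetry is exactly what one should expect: because in general $u_{s^*}\neq u_s^*$, the left and right conditions cannot be exchanged merely by passing to adjoints, and the detour through the adjoint-symmetric hub condition~\eqref{prop:CoherentHomogeneous:item:u_r leq u_s} (equivalently, the intertwining above) is what bridges the two sides.
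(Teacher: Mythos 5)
Your proof is correct and follows essentially the same route as the paper's: the paper likewise reduces everything to the identities $u_su_e=u_{se}$ and $u_eu_s=u_{es}$ via the parametrization $r=se=fs$ of $\{r\le s\}$ and the multiplier relations $u_s^*u_s=1_{s^*s}$, $u_su_s^*=1_{ss^*}$, with condition (iv) bridging the left- and right-hand families. The ``main obstacle'' you isolate for (vii)$\Rightarrow$(vi) --- the intertwining $u_s^*1_eu_s=1_{s^*es}$ --- is exactly the fact the paper establishes at the outset (and in Remark~\ref{rem:CoherenceOfPartialIsometries}) by showing that $u_su_t$ is strictly associated to $\A_{st}$ with initial projection $1_{t^*s^*st}$, so your resolution matches theirs.
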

\begin{proof}
Given $s,t\in S$, note that $u_su_t$ is associated to $\A_{st}$ because
\begin{equation*}
u_su_t\A_{t^*s^*st}=u_su_t\A_t^*\A_{s^*st}=u_s\A_{tt^*}\A_{s^*st}=u_s\A_{s^*st}=u_s\A_{s^*s}\A_t=A_s\A_t=\A_{st}
\end{equation*}
and similarly $\A_{stt^*s^*}u_su_t=\A_{st}$. Moreover $u_su_t$ is strictly associated to $\A_{st}$ because
$(u_su_t)^*(u_su_t)=u_t^*u_s^*u_su_t=u_t^*u_{s^*s}u_t=1_{t^*s^*st}$ (see Remark~\ref{rem:CoherenceOfPartialIsometries} and Proposition~\ref{prop:PartialIsometryStrictlyAssociated}).
Thus, we may view both $u_su_t$ and $u_{st}$ as unitary multipliers of the Hilbert bimodule $\A_{st}$.
The equation $\omega(s,t)=1_{st}$ is the same as $u_su_tu_{st}^*=1_{st}$ which is equivalent to
$u_su_t1_{t^*s^*st}=u_{st}$ because $u_{st}^*u_{st}=1_{t^*s^*st}$ and $1_{st}u_{st}=u_{st}$.
Moreover, since $u_su_t$ is a multiplier of $\A_{st}$, we have $u_su_t1_{t^*s^*st}=u_su_t$. Therefore the equation $\omega(s,t)=1_{st}$
is equivalent to $u_su_t=u_{st}$. From this we see that (i) is equivalent to (ii) and (vi) is equivalent to (vii).
Now, if $r\leq s$ in $S$, then $r=se$ for $e=r^*r$. Assuming (ii) and remembering that $u_e=1_e$, we get
\begin{equation*}
u_sx=u_su_ex=u_{se}x=u_rx\quad\mbox{for all }x\in \A_{r^*r}.
\end{equation*}
Thus (ii) implies (iii). Conversely, if $s\in S$ and $e\in E(S)$, and if we apply (iii) for $r=se$, then we get
$u_{se}x=u_rx=u_sx=u_su_ex$ for all $x\in A_{r^*r}=\A_{s^*se}=\A_{s^*s}\A_e$. This implies that $u_{se}=u_su_e$ because both $u_{se}$ and $u_su_e$
are multipliers of $\A_{se}$. Hence (ii) is equivalent to (iii). In the same way one can prove that (v) is equivalent to (vi).
Finally, condition (iv) is equivalent to $u_su_{r^*r}=u_su_r^*u_r=u_r$ whenever $r\leq s$ in $S$. Applying this for $r=se$ for some fixed $e\in E(S)$,
and using that $u_{r^*r}=u_{s^*se}=u_{s^*s}u_e$ and $u_su_{s^*s}=u_s$, we get condition (ii). Conversely, it is easy to see that (ii) implies (iv).
Similarly, observing that (iv) is equivalent to $u_{rr^*}u_s=u_{r}u_r^*u_s=u_r$ for all $r\leq s$, one proves that (iv) is equivalent to (vi).
\end{proof}

\begin{remark}\label{rem:CoherenceOfPartialIsometries}
Let notation be as in Proposition~\ref{prop:CoherentHomogeneous}.
Observe that the family of partial isometries $\{u_s\}_{s\in S}$ generates an inverse semigroup $\SSS$
under the product and involution of $\bound(\hils)$. Recall that the product $uv$ of two partial isometries
$u,v\in \bound(\hils)$ is again a partial isometry provided $u^*u$ and $vv^*$ commute.
This happens for the partial isometries $\{u_s\}_{s\in S}$ because $u_s^*u_s=1_{s^*s}$, $u_su_s^*=u_{ss^*}$
and $1_e1_f=1_{ef}=1_{fe}=1_{f}1_e$ for all $e,f\in E(S)$.
Thus the product $u_su_t$ is a partial isometry for all $s,t\in S$. Moreover, any finite product involving the partial isometries
$\{u_s\}_{s\in S}$ and their adjoints $\{u_s^*\}_{s\in S}$ is again a partial isometry. The idea is that for any such product $u$, say,
we have $u^*u=1_e$ and $uu^*=1_f$ for some $e,f\in E(S)$, and as we already said, $1_e$ and $1_f$ commute.
For example, to prove that $u_ru_su_t$ is a partial isometry,
one observes that $(u_ru_s)^*(u_ru_s)$ commutes with $u_tu_t^*=1_{tt^*}$. In fact, note that
\begin{equation*}
(u_ru_s)^*(u_ru_s)=u_s^*u_r^*u_ru_s=u_s^*1_{r^*r}u_s
\end{equation*}
and for every $e\in E(S)$, we have $u_s^*1_eu_s=1_{s^*es}$ because, for all $x\in \D_{s^*es}=\A_{s^*es}=\A_{s^*}\A_{es}$ we have
$u_sx\in u_s\A_{s^*}\A_{es}=\A_{ss^*}\A_{es}=\A_{es}=\A_e\A_s$, so that
\begin{equation*}
u_s^*1_eu_sx=u_s^*u_sx=1_{s^*s}x=x,
\end{equation*}
where in last equation we have used that $\D_{s^*es}\sbe\D_{s^*s}$ because $s^*es\leq s^*s$.
More generally, one can prove that
\begin{equation*}
(u_{s_1}\ldots u_{s_n})^*(u_{s_1}\ldots u_{s_n})=1_{s_n^*\ldots s_1^*s_1\ldots s_n}.
\end{equation*}
The inequality appearing in Proposition~\ref{prop:CoherentHomogeneous}\eqref{prop:CoherentHomogeneous:item:u_r leq u_s}
can be interpreted as the order relation of $\SSS$.
This justifies the argument at the end of the proof of Proposition~\ref{prop:CoherentHomogeneous} where we have used
that for $s\leq t$ in $S$, we have $u_s\leq u_t$ if and only if $u_s=u_tu_s^*u_s$ if and only if $u_s=u_su_s^*u_t$.

Note that the cocycles $\omega(s,t)$ belong to the inverse semigroup $\SSS$.
Moreover, if the partial isometries $\{u_s\}_{s\in S}$ satisfy the equivalent conditions~\eqref{prop:CoherentHomogeneous:item:omega(s,e)=1_se}-\eqref{prop:CoherentHomogeneous:item:omega(e,s)=1_es} of
Proposition~\ref{prop:CoherentHomogeneous}, then the cocycles $\{\omega(s,t)\}_{s,t\in S}$ satisfy
\begin{equation}\label{eq:CoherenceCocycles}
\omega(s,t)\leq\omega(s',t')\quad\mbox{for all }s,s',t,t'\in S\mbox{ with }s\leq s'\mbox{ and }t\leq t'.
\end{equation}
Here we view the unitaries $\omega(s,t)$ and $\omega(s',t')$ as elements of $\SSS$ in
order to give a meaning to the inequality~\eqref{eq:CoherenceCocycles}. Note that this follows directly from
condition~\eqref{prop:CoherentHomogeneous:item:u_r leq u_s} in Proposition~\ref{prop:CoherentHomogeneous} because the
product and the involution in $\SSS$ preserve the order relation. As in Proposition~\ref{prop:CoherentHomogeneous},
condition~\eqref{eq:CoherenceCocycles} can be also rewritten as
\begin{equation*}
\omega(s,t)x=\omega(s',t')x\quad\mbox{for all }s,s',t,t'\in S\mbox{ with }s\leq s'\mbox{ and }t\leq t'\mbox{ and }x\in \D_{st},
\end{equation*}
or equivalently as
\begin{equation*}
x\omega(s,t)=x\omega(s',t')\quad\mbox{for all }s,s',t,t'\in S\mbox{ with }s\leq s'\mbox{ and }t\leq t'\mbox{ and }x\in \D_{st}.
\end{equation*}
\end{remark}

\section{Representations and crossed products}
\label{sec:RepresentationsCrossedProducts}

In this section we prove that the correspondence between regular Fell bundles and twisted actions
obtained in the previous sections extends to the level of representations and yields an isomorphism of the associated
universal \cstar{}algebras.

We start recalling the definition of representations of Fell bundles (see \cite{Exel:noncomm.cartan}):

\begin{definition}\label{def:RepresentationFellBundle}
Let $\A=\{\A_s\}_{s\in S}$ be a Fell bundles over an inverse semigroup $S$.
A \emph{representation} of $\A$ on a Hilbert space $\hils$ is a family $\pi=\{\pi_s\}_{s\in S}$
of linear maps $\pi_s\colon \A_s\to \Ls(\hils)$ satisfying
\begin{enumerate}[(i)]
\item $\pi_s(a)\pi_t(b)=\pi_{st}(ab)$ and $\pi_s(a)^*=\pi_{s^*}(a^*)$ for all $s,t\in S$, $a\in \A_s$, $b\in \A_t$;\label{def:RepresentationFellBundle:item:AlgebraicOperations}
\item $\pi_t(j_{t,s}(a))=\pi_s(a)$ for all $s,t\in S$ with $s\leq t$ and $a\in \A_s$. Recall that $j_{t,s}\colon\A_s\to \A_t$
denotes the inclusion maps as in Definition~\ref{def:Fell bundles over ISG}.\label{def:RepresentationFellBundle:item:InclusionMaps}
\end{enumerate}
We usually view a representation of $\A$ as a map $\pi\colon\A\to \Ls(\hils)$ whose restriction to $\A_s$ gives the maps $\pi_s$ satisfying
the conditions above.
\end{definition}
 Next, we need a notion of covariant representation for twisted crossed products.
It turns out that, although our definition of twisted action is more general than Sieben's
one (see Section~\ref{sec:RelationToSiebensTwistedActions}), the notion of representation still remains the same (see \cite[Definition~3.2]{SiebenTwistedActions}):

\begin{definition}\label{def:CovariantRepresentation}
Let $(\beta,\omega)$ be a twisted action of an inverse semigroup $S$ on a \cstar{}algebra $B$.
A \emph{covariant representation} of $(\beta,\omega)$ on a Hilbert space $\hils$
is a pair $(\rho,v)$ consisting of a $*$-homomorphism $\rho\colon B\to \Ls(\hils)$ and a family $v=\{v_s\}_{s\in S}$ of
partial isometries $v_s\in \Ls(\hils)$ satisfying
\begin{enumerate}[(i)]
\item $\rho(\beta_s(b))=v_s\rho(b)v_s^*$ for all $s\in S$ and $b\in B$;\label{def:CovariantRepresentation:item:rho(beta_s(b))=v_s rho(b) v_s*}
\item $\rho(\omega(s,t))=v_sv_tv_{st}^*$ for all $s,t\in S$; and \label{def:CovariantRepresentation:item:rho(omega(s,t))=v_sv_tv_st*}
\item $v_s^*v_s=\rho(1_{s^*s})$ and $v_sv_s^*=\rho(1_{ss^*})$. \label{def:CovariantRepresentation:item:v_s*v_s=pi(1_s*s)}
\end{enumerate}
Recall that $1_e$ denotes the unit of the multiplier algebra of $\D_e=\dom(\beta_e)$ for all $e\in E(S)$. The third condition above is equivalent
to the requirements $\rho(D_{s^*s})H=v_s^*v_sH$ and $\rho(\D_{ss^*})H=v_sv_s^*H$. In both axioms (ii) and (iii) above we have
tacitly extended $\rho$ to the enveloping von Neumann algebra $B''$ of $B$ in order to give a meaning to $\rho(\omega(s,t))$ and $\rho(1_e)$.
\end{definition}

To each notion of representation is attached a universal \cstar{}algebra which encodes all the representations.
In the case of a Fell bundle $\A$, it is the so called \emph{(full) cross-sectional \cstar{}algebra} $C^*(\A)$
defined in \cite{Exel:noncomm.cartan}. For a twisted action
$(B,S,\beta,\omega)$, the \emph{(full) crossed product} $B\rtimes_{\beta,\omega}S$ defined in
\cite{SiebenTwistedActions} can still be used although our definition of twisted action is more general.
Our aim in this section is to relate these notions when
$\A$ is the Fell bundle associated to $(B,S,\beta,\omega)$ as in the previous sections.

\begin{theorem}\label{theo:CorresponceRepresentations}
Let $(\beta,\omega)$ be a twisted action of and inverse semigroup $S$ on a \cstar{}algebra $B$,
and let $\A=\{\A_s\}_{s\in S}$ be the associated Fell bundle as in Section~\ref{sec:TwistedActions}.
Then there is a bijective correspondence between covariant representations of $(\beta,\omega)$ and representations of $\A$.
Moreover, there exists an isomorphism $B\rtimes_{\beta,\omega}S\cong C^*(\A)$.
\end{theorem}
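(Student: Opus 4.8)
The plan is to establish the correspondence between covariant representations and Fell bundle representations first, and then derive the \cstar{}algebra isomorphism at the universal level. The starting point is the explicit formula for the Fell bundle $\A$ attached to $(\beta,\omega)$, where $\A_s=\D_s\delta_s$ with product and involution given by~\eqref{eq:DefProductFellBundleFromTwistedAction} and~\eqref{eq:DefInvolutionFellBundleFromTwistedAction}, inclusion maps~\eqref{eq:DefInclusionsFellBundleFromTwistedAction}, and the canonical unitary multipliers $v_s=1_{ss^*}\delta_s\in\mult(\A_s)$ defined in~\eqref{eq:definitionOfu_s}. Recall from the discussion following Theorem~\ref{theo:FellBundleFromTwistedAction} that $\B_e=\D_e\delta_e\cong\D_e$ and that $C^*(\E)\cong B$, so $B$ sits inside the multiplier algebra of $\A$ as the completion of the idempotent fibers.

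First I would describe the bijection on representations explicitly. Given a representation $\pi=\{\pi_s\}_{s\in S}$ of $\A$, I would define $\rho\colon B\to\Ls(\hils)$ by assembling the maps $\pi_e$ on the ideals $\D_e$ (extending to $B=C^*(\E)$ by the universal property, using Definition~\ref{def:RepresentationFellBundle}\eqref{def:RepresentationFellBundle:item:InclusionMaps} for compatibility), and set $v_s\defeq\overline{\pi_s}(v_s)$, the image of the unitary multiplier $v_s\in\mult(\A_s)$ under the canonical extension of $\pi_s$ to multipliers. Conversely, given a covariant representation $(\rho,v)$ of $(\beta,\omega)$, I would define $\pi_s\colon\A_s\to\Ls(\hils)$ by $\pi_s(a\delta_s)\defeq\rho(a)v_s$. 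The substantive work is checking that these assignments are mutually inverse and that each lands in the correct class. For the forward direction, the multiplicativity $\pi_s(a\delta_s)\pi_t(b\delta_t)=\pi_{st}((a\delta_s)\cdot(b\delta_t))$ unwinds to the identity $\rho(a)v_s\rho(b)v_t=\rho\big(\beta_s(\beta_s\inv(a)b)\omega(s,t)\big)v_{st}$, which follows from axiom~\eqref{def:CovariantRepresentation:item:rho(beta_s(b))=v_s rho(b) v_s*} (to move $v_s$ past $\rho(b)$) together with axiom~\eqref{def:CovariantRepresentation:item:rho(omega(s,t))=v_sv_tv_st*} (to replace $v_sv_t$ by $\rho(\omega(s,t))v_{st}$); the $*$-compatibility uses~\eqref{eq:OtherFormulaForInvolution} and axiom~\eqref{def:CovariantRepresentation:item:rho(beta_s(b))=v_s rho(b) v_s*}; and the inclusion-map condition~\eqref{def:RepresentationFellBundle:item:InclusionMaps} translates, via~\eqref{eq:DefInclusionsFellBundleFromTwistedAction}, into $\rho(a\omega(t,s^*s)^*)v_t=\rho(a)v_s$ for $s\leq t$, which I would verify from axiom~\eqref{def:CovariantRepresentation:item:rho(omega(s,t))=v_sv_tv_st*} applied to the pair $(t,s^*s)$ together with the relation $u_su_t^*=\omega(t,s^*s)^*$ noted before Theorem~\ref{theo:FellBundleFromTwistedAction}. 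The reverse direction reverses these computations, with axiom~\eqref{def:CovariantRepresentation:item:v_s*v_s=pi(1_s*s)} recovered from the fact that each $v_s$ is a unitary multiplier of $\A_s$ and that $\pi_e$ restricts to $\rho$ on $\D_e$.

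Having the bijection on representations, the isomorphism $B\rtimes_{\beta,\omega}S\cong C^*(\A)$ follows by the standard universal-property argument: both algebras are defined as the universal \cstar{}algebras encoding precisely the two (now-identified) classes of representations, so the bijection intertwines the corresponding \Star{}homomorphisms into each $\Ls(\hils)$ and therefore descends to a \Star{}isomorphism of the universal completions. Concretely, I would exhibit the dense \Star{}subalgebra of finite sums $\sum_s a_s\delta_s$ common to both constructions and observe that the bijection is the identity on this level, so that the two universal norms coincide and the completions agree. \textbf{The main obstacle} I anticipate is the inclusion-map compatibility~\eqref{def:RepresentationFellBundle:item:InclusionMaps}: unlike the group case, the Fell bundle here has nontrivial inclusion maps twisted by the cocycle factor $\omega(t,s^*s)^*$, so one must check that a covariant representation automatically respects these, and conversely that a Fell bundle representation produces partial isometries $v_s$ that are genuinely \emph{coherent} across the order relation rather than merely fiberwise unitary. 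This is exactly where the delicate axioms~\eqref{def:twisted action:item:omega(r,r*r)=omega(e,f)=1_ef_and_omega(rr*,r)=1} and~\eqref{def:twisted action:item:omega(s*,e)omega(s*e,s)x=omega(s*,s)x} of Definition~\ref{def:twisted action}, via the consequences collected in Lemma~\ref{lem:ConsequencesDefTwistedAction}, are needed to make the two notions of representation line up, and care is required because the $v_s$ need not satisfy Sieben's stronger condition~\eqref{eq:SiebensCondition}.
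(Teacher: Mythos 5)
Your proposal is correct and follows essentially the same route as the paper: the bijection is given by $\pi(a\delta_s)=\rho(a)v_s$ in one direction and by restricting $\pi$ to the idempotent fibers and pushing the canonical unitary multipliers $1_{ss^*}\delta_s$ forward in the other, with the inclusion-map compatibility handled exactly via $\omega(t,s^*s)$ and Definition~\ref{def:CovariantRepresentation}\eqref{def:CovariantRepresentation:item:rho(omega(s,t))=v_sv_tv_st*}, and the algebra isomorphism then following from the universal properties. The only cosmetic difference is that the paper realizes the multipliers $u_s$ inside the enveloping von Neumann algebra of $C^*(\A)$ (via Proposition~\ref{prop:strictlyAssociated=>weakClosure}) and applies the weakly continuous extension of the integrated form, whereas you extend $\pi_s$ to multipliers directly; both devices are standard and equivalent here.
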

\begin{proof}
Recall that the fiber $\A_s=\D_s\delta_s$ is a copy of $\D_s=\D_{ss^*}=\ran(\beta_s)$. During the proof we
write $u_s$ for the unitary multiplier $1_{ss^*}\delta_s\in \mult(\A_s)$ (here $\A_s$ is viewed as an imprimitivity Hilbert $A_{ss^*},A_{s^*s}$\nb-bimodule). With this notation, we have $\A_s=\D_{ss^*}u_s$ for all $s\in S$.
In particular, $\A_e=\D_e\delta_e\cong\D_e$ as \cstar{}algebras and this induces an isomorphism $C^*(\E)\cong B$,
where $\E$ is the restriction of $\A$ to $E(S)$. Moreover, as we have seen
in the previous section, the unitaries $u_s$ can be used to recover the twisted action $(\beta,\omega)$ through the formulas:
\begin{equation}\label{eq:CharacterizationOfBetaViaU_s}
\beta_s(b)\delta_{ss^*}=u_s(b\delta_{s^*s})u_s^*,
\end{equation}
\begin{equation}\label{eq:CharacterizationOfOmegaViaU_s}
\omega(s,t)\delta_{stt^*s^*}=u_su_tu_{st}^*.
\end{equation}
Let $\pi\colon \A\to \Ls(\hils)$ be a representation of $\A$. This representation integrates to a \Star{}homomorphism
$\tilde\pi\colon C^*(\A)\to \Ls(\hils)$. We may view the fiber $\A_e$ for $e\in E(S)$ and $C^*(\E)$ as subalgebras of $C^*(\A)$.
By restriction of $\tilde\pi$ to $C^*(\E)$, we get a \Star{}homomorphism $\rho$ from $B\cong C^*(\E)$ into $\Ls(\hils)$ which is characterized
by
\begin{equation*}
\rho(b)=\tilde\pi(b\delta_e)\quad\mbox{whenever }e\in E(S)\mbox{ and }b\in \D_e.
\end{equation*}
By Proposition~\ref{prop:strictlyAssociated=>weakClosure},
the unitary multipliers $u_s$ may be viewed as elements of the enveloping von Neumann algebra $A''$ of $A\defeq C^*(\A)$.
Considering the unique weakly continuous extension of $\tilde\pi$ to $A''$ (and still using the same notation for it), we define
the partial isometries $v_s\defeq \tilde\pi(u_s)\in \Ls(\hils)$. The pair $(\rho,v)$ is a covariant representation of $(\beta,\omega)$.
In fact, using Equation~\eqref{eq:CharacterizationOfBetaViaU_s}, we get
\begin{align*}
\rho(\beta_s(b))&=\tilde\pi(\beta_s(b)\delta_{ss^*})=\tilde\pi(u_s(b\delta_{s^*s})u_s^*)\\
                &=\tilde\pi(u_s)\tilde\pi(b\delta_{s^*s})\tilde\pi(u_s)^*=v_s\rho(b)v_s^*
\end{align*}
for all $s\in S$ and $b\in \D_{s^*s}$. Since the ideals $\D_e\sbe B$ span a dense subspace of $B$, this proves~\eqref{def:CovariantRepresentation:item:rho(beta_s(b))=v_s rho(b) v_s*} in Definition~\ref{def:CovariantRepresentation}.
Definition~\ref{def:CovariantRepresentation}\eqref{def:CovariantRepresentation:item:rho(omega(s,t))=v_sv_tv_st*} follows in the same way
using Equation~\eqref{eq:CharacterizationOfOmegaViaU_s}:
\begin{equation*}
\rho(\omega(s,t))=\tilde\pi(\omega(s,t)\delta_{stt^*s^*})=\tilde\pi(u_su_tu_{st}^*)
=\tilde\pi(u_s)\tilde\pi(u_t)\tilde\pi(u_{st})^*=v_sv_tv_{st}^*.
\end{equation*}
Since $u_s^*u_s=1_{s^*s}\delta_{s^*s}$ and $u_su_s^*=1_{ss^*}\delta_{ss^*}$, axiom~\eqref{def:CovariantRepresentation:item:v_s*v_s=pi(1_s*s)}
in Definition~\ref{def:CovariantRepresentation} also follows easily. Hence we have a map $\pi\mapsto (\rho,v)$ from
the set of representations of $\A$ on $\hils$ to the set of covariant representation of $(\beta,\omega)$ on $\hils$.

Conversely, let us now start with a covariant representation $(\rho,v)$ of $(\beta,\omega)$ on $\hils$. Then we can
define $\pi\colon\A\to \Ls(\hils)$ by $\pi(a\delta_s)\defeq \rho(a)v_s$ for all $s\in S$ and $a\in \D_{ss^*}$.
Before we prove that $\pi$ is a representation of $\A$, observe that $v_e=\pi(1_e)$ for all $e\in E(S)$ (see proof of Proposition~3.5 in \cite{SiebenTwistedActions}) so that $v_s^*v_s=\rho(1_{s^*s})=v_{s^*s}$ and $v_sv_s^*=\rho(1_{ss^*})=v_{ss^*}$ for all $s\in S$.
By Lemma~\ref{lem:ConsequencesDefTwistedAction}\eqref{lem:ConsequencesDefTwistedAction:item:beta_s*=Ad_omega(s*,s)beta_s inv}, we have
\begin{align*}
v_s\rho(\beta_s\inv(a))&=v_s\rho(\omega(s^*,s))^*\rho(\beta_{s^*}(a))\rho(\omega(s^*,s))\\
                &=v_sv_{s^*s}v_s^*v_{s^*}^*v_{s^*}\rho(a)v_{s^*}^*v_{s^*}v_sv_{s^*s}=\rho(a)v_s=\pi(a\delta_s).
\end{align*}
Thus, for all $s,t\in S$, $a\in \D_{ss^*}$ and $b\in \D_{tt^*}$,
\begin{align*}
\pi\big((a\delta_s)\cdot(b\delta_t)\big)&=\pi\big(\beta_s(\beta_s\inv(a)b)\omega(s,t)\delta_{st}\big)\\
                                        &=\rho\big(\beta_s(\beta_s\inv(a)b)\big)\rho(\omega(s,t))v_{st}\\
                                        &=v_s\rho(\beta_s\inv(a))\rho(b)v_s^*v_sv_tv_{st}^*v_{st}\\
                                        &=v_s\rho(\beta_s\inv(a))\rho(b)v_t=\pi(a\delta_s)\pi(b\delta_t).\\
\end{align*}
Similarly,
\begin{align*}
\pi\big((a\delta_s)^*\big)&=\pi(\beta_s\inv(a^*)\omega(s^*,s)^*\delta_{s^*})=\rho(\beta_s\inv(a^*))\rho(\omega(s^*,s)^*)v_{s^*}\\
                            &=\rho(\beta_s\inv(a^*))(v_{s^*}v_sv_{s^*s})^*v_{s^*}=\rho(\beta_s\inv(a^*))v_s^*\\
                            &=v_s^*\rho(a^*)v_sv_s^*=(\rho(a)v_s)^*=\pi(a\delta_s)^*.
\end{align*}
This proves axiom~\eqref{def:RepresentationFellBundle:item:AlgebraicOperations} in Definition~\ref{def:RepresentationFellBundle}.
To prove Definition~\ref{def:RepresentationFellBundle}\eqref{def:RepresentationFellBundle:item:InclusionMaps},
take $s,t\in S$ with $s\leq t$ and $a\in \D_{s^*s}$. Note that
\begin{equation*}
v_{s^*s}v_t^*v_t=\rho(1_{s^*s})\rho(1_{t^*t})=\rho(1_{s^*st^*t})=\rho(1_{s^*s})=v_{s^*s}.
\end{equation*}
Hence
\begin{multline*}
\pi(j_{t,s}(a))=\pi(a\omega(t,s^*s)^*\delta_t)=\rho(a\omega(t,s^*s)^*)v_t=\rho(a)(v_tv_{s^*s}v_{ts^*s}^*)^*v_t\\
=\rho(a)v_sv_{s^*s}v_t^*v_t=\rho(a)v_sv_{s^*s}=\rho(a)v_s=\pi(a).
\end{multline*}
Therefore $\pi$ is a representation of $\A$ on $\hils$. It is not difficult to see that the assignments $\pi\mapsto (\rho,v)$
and $(\rho,v)\mapsto \pi$ between representations $\pi$ of $\A$ and covariant representations $(\rho,v)$ of $(\beta,\omega)$
are inverse to each other and hence give the desired bijective
correspondence as in the assertion. The isomorphism $B\rtimes_{\beta,\omega}S\cong C^*(\A)$ now follows from the universal properties of
$B\rtimes_{\beta,\omega}S$ and $C^*(\A)$ with respect to (covariant) representations
(see \cite{Exel:noncomm.cartan,SiebenTwistedActions} for details).
\end{proof}

In \cite{Exel:noncomm.cartan} the second named author also defines the \emph{reduced cross-sectional algebra} $C^*_\red(\A)$ of a Fell bundle $\A$.
It is the image of $C^*(\A)$ by (the integrated form of) a certain special representation of $\A$, the so called
\emph{regular representation} of $\A$. Using Theorem~\ref{theo:CorresponceRepresentations} we can now also define reduced crossed products
for twisted actions (this was not defined in \cite{SiebenTwistedActions}):

\begin{definition}
Let $(B,S,\beta,\omega)$ be a twisted action and let $\A$ be the associated Fell bundle.
Let $\Lambda$ denote the regular representation of $\A$ as defined in \cite[Section~8]{Exel:noncomm.cartan}.
The \emph{regular covariant representation} of $(B,S,\beta,\omega)$ is the representation $(\lambda,\upsilon)$ corresponding to $\Lambda$
as in Theorem~\ref{theo:CorresponceRepresentations}.
The \emph{reduced crossed product} is
\begin{equation*}
B\rtimes_{\beta,\omega}^\red S\defeq \lambda\rtimes\upsilon\big(B\rtimes_{\beta,\omega}S\big),
\end{equation*}
where $\lambda\rtimes\upsilon$ is the \emph{integrated form} of $(\lambda,\upsilon)$ defined in \cite[Definition~3.6]{SiebenTwistedActions}.
\end{definition}

Notice that, by definition, we have a canonical isomorphism
\begin{equation}\label{eq:IsomorphismReducedCrossedProductAndReducedGrupoideAlgebra}
B\rtimes_{\beta,\omega}^\red S\cong C^*_\red(\A).
\end{equation}

\section{Relation to twisted groupoids}
\label{sec:motivating example}

Let $\G$ be a locally compact étale groupoid with unit space $X=\Gz$ (see
\cite{Exel:inverse.semigroups.comb.C-algebras, KhoshkamSkandalis:CrossedProducts, Paterson:Groupoids, RenaultThesis} for
further details on étale groupoids). Recall that a \emph{twist} over $\G$ is a topological
groupoid $\Sigma$ that fits into a groupoid extension of the form
\begin{equation}\label{eq:ExtensionTwistedGroupoid}
\Torus\times X\into \Sigma \onto \G.
\end{equation}
The pair $(\G,\Sigma)$ is then called a \emph{twisted étale groupoid}.
We refer the reader to \cite{Deaconi_Kumjian_Ramazan:Fell.Bundles,Muhly.Williams.Continuous.Trace.Groupoid,RenaultCartan} for more details.
Twists over $\G$ can be alternatively described as principal circle bundles (these are the $\Torus$\nb-groupoids defined
in \cite[Section~2]{Muhly.Williams.Continuous.Trace.Groupoid}) and the classical passage to (complex) line bundles enables us to view
twists over $\G$ as \emph{Fell line bundles} over $\G$, that is,
(locally trivial) one-dimensional Fell bundles over the groupoid $\G$ in the sense of Kumjian \cite{Kumjian:fell.bundles.over.groupoids}.

As we have seen in \cite{BussExel:Fell.Bundle.and.Twisted.Groupoids},
there is a correspondence between twisted étale groupoids and \emph{semi-abelian} saturated Fell bundles over inverse semigroups, that is, saturated Fell bundles $\A=\{\A_s\}_{s\in S}$ for which $\A_e$ is an abelian \cstar{}algebra for all $e\in E(S)$.
This enables us to apply our previous results and describe twisted étale groupoids from the point of view of twisted actions of
inverse semigroups on abelian \cstar{}algebras (compare with \cite[Theorem~3.3.1]{Paterson:Groupoids}, \cite[Theorem~8.1]{Quigg.Sieben.C.star.actions.r.discrete.groupoids.and.inverse.semigroups} and \cite[Theorem~9.9]{Exel:inverse.semigroups.comb.C-algebras}):

\begin{theorem}\label{theo:CorrespondenceTwistedGroupoidsAndTwistedActions}
Given a twisted étale groupoid $(\G,\Sigma)$, there is an inverse semigroup $S$ consisting of bisections of $\G$
and a twisted action $(\beta,\omega)$ of $S$ on $\contz\big(\Gz\big)$ such that the (reduced) groupoid \cstar{}algebra $C^*_\red(\G,\Sigma)$
is isomorphic to the (reduced) crossed product $\contz\big(\Gz\big)\rtimes_{\beta,\omega}^\red S$. Conversely, if $(\beta,\omega)$ is a twisted action of
an inverse semigroup $S$ on a commutative \cstar{}algebra $\contz(X)$ for some locally compact Hausdorff space $X$, then
there is a twisted étale groupoid $(\G,\Sigma)$ with $\Gz=X$ such that $C^*_\red(\G,\Sigma)\cong \contz(X)\rtimes^\red_{\beta,\omega} S$.

If, in addition, the groupoid $\G$ is Hausdorff or second countable, then we also have an isomorphism of full \cstar{}algebras
$C^*(\G,\Sigma)\cong \contz(X)\rtimes_{\beta,\omega} S$.
\end{theorem}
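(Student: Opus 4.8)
The plan is to assemble three correspondences that are already at our disposal: the correspondence between twisted \'etale groupoids and semi-abelian saturated Fell bundles over inverse semigroups established in \cite{BussExel:Fell.Bundle.and.Twisted.Groupoids}; the correspondence between regular, saturated Fell bundles and twisted actions from Corollary~\ref{cor:CorrespondenceRegularFellBundlesAndTwistedActions}; and the identification of crossed products with cross-sectional algebras from Theorem~\ref{theo:CorresponceRepresentations} and Equation~\eqref{eq:IsomorphismReducedCrossedProductAndReducedGrupoideAlgebra}. The one genuinely new point is bridging the gap between \emph{semi-abelian} and \emph{regular}, which is precisely where the local triviality of the line bundle enters.

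For the forward direction I would first pass from $(\G,\Sigma)$ to the associated Fell line bundle $L=(\Sigma\times\C)/\Torus$ over $\G$, and then, via \cite[Example~2.11]{BussExel:Fell.Bundle.and.Twisted.Groupoids}, to the Fell bundle whose fibre over an open bisection $s$ is the space of $\contz$-sections of $L$ over $s$. By Proposition~\ref{prop:Contz(L)RegularIFFLTopologicallyTrivial} such a fibre is regular precisely when $L$ is topologically trivial over $s$, so I would let $S$ be the inverse semigroup of \emph{small} bisections, those over which $L$ trivialises; these exist in abundance, since $L$ is locally trivial, and they form a basis for the topology of $\G$ while containing every idempotent bisection (as $L$ restricts trivially to $\Gz$). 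The restricted bundle $\A=\{\A_s\}_{s\in S}$ is then regular and saturated, and its unit-fibre algebra $C^*(\E)$ is $\contz(\Gz)$, so Corollary~\ref{cor:CorrespondenceRegularFellBundlesAndTwistedActions} furnishes a twisted action $(\beta,\omega)$ of $S$ on $\contz(\Gz)$. Finally Theorem~\ref{theo:CorresponceRepresentations} together with \eqref{eq:IsomorphismReducedCrossedProductAndReducedGrupoideAlgebra} gives $\contz(\Gz)\rtimes^\red_{\beta,\omega}S\cong C^*_\red(\A)$, which I identify with $C^*_\red(\G,\Sigma)$ using the cross-sectional description of the groupoid algebra in \cite{BussExel:Fell.Bundle.and.Twisted.Groupoids}; here one must check that restricting to the subsemigroup of small bisections does not alter the cross-sectional algebra, for which I would invoke the refinement-invariance result Theorem~\ref{theo:RefinementPreserveFullC*Algebras}.

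The converse runs in reverse. Starting from a twisted action $(\beta,\omega)$ of $S$ on $\contz(X)$, Theorem~\ref{theo:FellBundleFromTwistedAction} produces a regular, saturated Fell bundle $\B$, whose idempotent fibres are ideals of the commutative algebra $\contz(X)$ and hence commutative, so $\B$ is semi-abelian. Applying the reverse half of the groupoid/Fell-bundle correspondence of \cite{BussExel:Fell.Bundle.and.Twisted.Groupoids} yields a twisted \'etale groupoid $(\G,\Sigma)$ with $\Gz=X$, and composing the isomorphisms $\contz(X)\rtimes^\red_{\beta,\omega}S\cong C^*_\red(\B)\cong C^*_\red(\G,\Sigma)$ gives the claim.

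For the final assertion on full algebras I would replace the reduced isomorphisms by their full analogues, using $\contz(X)\rtimes_{\beta,\omega}S\cong C^*(\A)$ from Theorem~\ref{theo:CorresponceRepresentations}; the only subtlety is the identification $C^*(\A)\cong C^*(\G,\Sigma)$, which can fail for non-Hausdorff $\G$ because the two notions of full groupoid algebra (via compactly supported sections versus via the Fell bundle) need not agree, but which holds when $\G$ is Hausdorff or second countable by the results of \cite{BussExel:Fell.Bundle.and.Twisted.Groupoids}. I expect this full-versus-reduced identification, together with the verification that passing to small bisections is harmless at the level of $C^*$-algebras, to be the main obstacle, since everything else is a formal concatenation of already-established equivalences.
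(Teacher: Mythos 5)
Your overall architecture coincides with the paper's: restrict to the inverse semigroup $S$ of bisections over which the line bundle $L$ trivialises, observe via Proposition~\ref{prop:Contz(L)RegularIFFLTopologicallyTrivial} that the resulting Fell bundle $\A=\{\contz(L_s)\}_{s\in S}$ is regular and saturated, extract the twisted action from Section~\ref{sec:regular Fell bundles}, and convert crossed products into cross-sectional algebras via Theorem~\ref{theo:CorresponceRepresentations}; the converse and the Hausdorff/second-countable caveat for the full algebras are likewise handled exactly as you describe. However, there is one concrete misstep: you propose to justify the identification $C^*_\red(\A)\cong C^*_\red(\G,\Sigma)$ by invoking the refinement-invariance result Theorem~\ref{theo:RefinementPreserveFullC*Algebras}, arguing that passing from all bisections to the small ones is a refinement. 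It is not. In Definition~\ref{def:refinement} a refinement of a bundle over $S(\G)$ would require a \emph{surjective}, essentially injective homomorphism $\phi\colon S\to S(\G)$, whereas the natural map here is the (generally non-surjective) inclusion $S\into S(\G)$; moreover the bundle over all of $S(\G)$ is not the object one wants to compare with in the first place, since its fibres over non-small bisections need not be regular. The paper instead verifies that $S$ is \emph{wide} in the sense of condition~\eqref{eq:BasisCondition} --- which holds because $L$ is locally trivial and restrictions of trivial bundles are trivial --- and then quotes \cite[Theorem~4.11]{BussExel:Fell.Bundle.and.Twisted.Groupoids} (and \cite[Proposition~2.18]{BussExel:Fell.Bundle.and.Twisted.Groupoids} for the full case) to identify the cross-sectional algebra over any wide subsemigroup of bisections with the (reduced, resp.\ full) twisted groupoid algebra. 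Your step is therefore fixable, but the tool you name would not carry it.

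A smaller point worth recording: you should also note, as the paper does, that the unit-fibre algebra $C^*(\E)$ is identified with $\contz(\Gz)$ by gluing the isomorphisms $\contz(L_e)\cong\contz(e)$ over the covering $E(S)$ of $\Gz$ (using that $L$ is trivial over the unit space) via \cite[Proposition~4.3]{Exel:noncomm.cartan}; your parenthetical remark that idempotent bisections are automatically small is the right observation, but the passage from the family of ideals to the global isomorphism still needs this citation.
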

\begin{proof}
Let $(\G,\Sigma)$ be a twisted étale groupoid and let $L$ be the associated Fell line bundle over $\G$. Recall that an open subset $s\sbe \G$ is a \emph{bisection} (also called \emph{slice} in \cite{Exel:inverse.semigroups.comb.C-algebras}) if the restrictions of the source and range maps $\domain, \range\colon \G\to \Gz$ to $s$ are homeomorphisms onto their images. The set $S(\G)$ of all bisections of $\G$ forms an inverse semigroup with respect to the product $st\defeq\{\alpha\beta\colon \alpha\in s,\beta\in t,\domain(\alpha)=\range(\beta)\}$ and the involution $s^*\defeq\{\alpha\inv\colon \alpha\in s\}$ (see \cite{Exel:inverse.semigroups.comb.C-algebras}). Given $s\in S(\G)$, let $L_s$ be the restriction  of $L$ to $s$. Let $S$ be the subset of $S(\G)$ consisting
of all bisections $s\in S(\G)$ for which the line bundle $L_s$ is trivial.
If $L_s$ and $L_t$ are trivial, then so are $L_{st}$ and $L_{s^*}$  because the (convolution) product
$\xi\cdot\eta$ (defined by $(\xi\cdot\eta)(\g)\defeq\xi(\alpha)\eta(\beta)$ whenever $\g=\alpha\beta\in st$)
is a unitary section of $L_{st}$ provided $\xi$ and $\eta$ are unitary sections of $L_s$ and $L_t$, respectively (note
that in a Fell line bundle $L$, we have $\|ab\|=\|a\|\|b\|$ for all $a,b\in L$); and the involution $\xi^*(\g)\defeq \xi(\g\inv)^*$
also provides a bijective correspondence between unitary sections of $L_s$ and $L_{s^*}$.
Thus $S$ is an inverse sub-semigroup of $S(\G)$.
Since $L$ is locally trivial, $S$ is a covering for $\G$ and we obviously have $s\cap t\in S$ for $s,t\in S$. In particular,
$S$ is \emph{wide} in the sense of \cite[Definition~2.14]{BussExel:Fell.Bundle.and.Twisted.Groupoids}, that is, it satisfies the condition:
\begin{equation}\label{eq:BasisCondition}
\mbox{for all }s,t\in S \mbox{ and } \g\in s\cap t,
\mbox{ there is } r\in S\mbox{ such that }\g\in r\sbe s\cap t.
\end{equation}
This condition also appears in \cite[Proposition~5.4.ii]{Exel:inverse.semigroups.comb.C-algebras}.
For each $s\in S$, we define $\A_s$ to be the space $\contz(L_s)$ of continuous sections of $L_s$ vanishing at infinity.
Then, with respect to the (convolution) product and the involution of sections defined above, the family $\A=\{\A_s\}_{s\in S}$ is a saturated Fell bundle over $S$ (see \cite[Example~2.11]{BussExel:Fell.Bundle.and.Twisted.Groupoids} for details). Moreover, by Proposition~\ref{prop:Contz(L)RegularIFFLTopologicallyTrivial},
the Hilbert $\contz(ss^*),\contz(s^*s)$\nb-bimodule $\A_s=\contz(L_s)$
is regular because $L$ is trivial over every $s\in S$. Thus $\A$ is a saturated, regular Fell bundle.
By our construction in Section~\ref{sec:regular Fell bundles}, $\A$ gives rise to a twisted action $(\beta,\omega)$ of $S$ on $B=C^*(\E)$, where $\E$ is the restriction of $\A$ to $E(S)$. Observe that $E(S)$ is a covering for $\Gz$ and since $L$
is trivial over $\Gz$ (because it is a one-dimensional continuous \cstar{}bundle)
we have $\contz(L_e)\cong\contz(e)$ for every $e\in E(S)$ (note that $e\sbe \Gz$).
This implies that $B\cong\contz\big(\Gz\big)$ by an application of Proposition~4.3 in \cite{Exel:noncomm.cartan}.
By Theorem~\ref{theo:CorresponceRepresentations} and Equation~\eqref{eq:IsomorphismReducedCrossedProductAndReducedGrupoideAlgebra}, we have canonical isomorphisms $\contz\big(\Gz\big)\rtimes_{\beta,\omega}S\cong C^*(\A)$ and $\contz\big(\Gz\big)\rtimes_{\beta,\omega}^\red S\cong C^*_\red(\A)$. Moreover,
by \cite[Theorem~4.11]{BussExel:Fell.Bundle.and.Twisted.Groupoids}, $C^*_\red(\A)\cong C^*_\red(\G,\Sigma)$ and by \cite[Proposition~2.18]{BussExel:Fell.Bundle.and.Twisted.Groupoids}, $\contz\big(\Gz\big)\rtimes_{\beta,\omega}S\cong C^*(\A)$ provided $\G$ is Hausdorff
or second countable.

Conversely, starting with a twisted action $(\beta,\omega)$ of an inverse semigroup $S$ on a commutative \cstar{}algebra $\contz(X)$,
let $\A=\{\A_s\}_{s\in S}$ be the associated Fell bundle as in Section~\ref{sec:TwistedActions}.
Then $\A$ is saturated and semi-abelian, so the construction in
\cite[Section~3]{BussExel:Fell.Bundle.and.Twisted.Groupoids} provides a twisted étale groupoid $(\G,\Sigma)$ with $\Gz=X$ together with isomorphisms
$C^*_\red(\G,\Sigma)\cong C^*_\red(\A)\cong \contz\big(\Gz\big)\rtimes_{\beta,\omega}^\red S$ again by \cite[Theorem~4.11]{BussExel:Fell.Bundle.and.Twisted.Groupoids} and Equation~\ref{eq:IsomorphismReducedCrossedProductAndReducedGrupoideAlgebra};
and if $\G$ is Hausdorff or second countable then
$C^*(\G,\Sigma)\cong C^*(\A)\cong \contz\big(\Gz\big)\rtimes_{\beta,\omega}S$ by \cite[Proposition~3.40]{BussExel:Fell.Bundle.and.Twisted.Groupoids}, \cite[Proposition~2.18]{BussExel:Fell.Bundle.and.Twisted.Groupoids} and Theorem~\ref{theo:CorresponceRepresentations}.
\end{proof}

We shall next study the question of whether or not the twisted action constructed from a twisted groupoid as above
satisfies Sieben’s condition (see Definition~\ref{def:SiebensTwistedAction}).

\begin{proposition}\label{prop:SiebenTwistedActions=TopologicalTrivial}
Let $(\G,\Sigma)$ be a twisted groupoid  and let $(\contz\big(\Gz\big),S,\beta,\omega)$ be a twisted action associated to $(\G,\Sigma)$
as in Theorem~\ref{theo:CorrespondenceTwistedGroupoidsAndTwistedActions}.
Then the cocycles $\omega(s,t)$ can be chosen to satisfy Sieben's condition~\eqref{eq:SiebensCondition}
if and only if the twist $\Sigma$ is topologically trivial \textup(that is,
$\Sigma\cong\Torus\times\G$ as circle bundles\textup) or, equivalently, if the Fell line bundle $L$ associated to $(\G,\Sigma)$
is topologically trivial \textup(that is, $L\cong \C\times\G$ as complex line bundles\textup).
\end{proposition}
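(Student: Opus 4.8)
The plan is to translate everything into the language of unitary sections of the line bundle $L$ and to show that Sieben's condition amounts to choosing these sections coherently, which in turn is possible precisely when $L$ admits a global unitary section.

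First I would recall that, for each $s\in S$, the fiber $\A_s=\contz(L_s)$ is a regular imprimitivity $\contz(ss^*),\contz(s^*s)$-bimodule whose unitary multipliers are exactly the continuous unitary sections of $L$ over $s$ (this is the content of the discussion preceding Proposition~\ref{prop:Contz(L)RegularIFFLTopologicallyTrivial}, where the associated multiplier bimodule is identified with $\contb(L_s)$). Thus choosing the partial isometries $u_s$ strictly associated to $\A_s$ is the same as choosing, for each $s\in S$, a continuous unitary section $\sigma_s$ of $L_s$; the normalization $u_e=1_e$ for $e\in E(S)$ corresponds to taking $\sigma_e$ to be the canonical unit section of $L_e$ (recall $L$ is trivial over $\Gz$, as it is one-dimensional there). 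Under this dictionary, the relation $u_r\le u_s$ of Remark~\ref{rem:CoherenceOfPartialIsometries}, namely $u_r=u_s1_{r^*r}$, becomes the restriction identity $\sigma_r=\sigma_s\rest{r}$ whenever $r\le s$ in $S$ (using $r=sr^*r$).

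Next I would invoke Proposition~\ref{prop:CoherentHomogeneous}: for the cocycle $\omega(s,t)=u_su_tu_{st}^*$ determined by a choice $\{u_s\}$, Sieben's condition~\eqref{eq:SiebensCondition} (that is, $\omega(s,e)=1_{se}$ and $\omega(e,s)=1_{es}$ for all $s$ and $e\in E(S)$) holds if and only if $u_r\le u_s$ for all $r\le s$. By the previous paragraph this is equivalent to the coherence condition $\sigma_s\rest{r}=\sigma_r$ for all $r\le s$. Hence the proposition reduces to the statement that one can choose a coherent family $\{\sigma_s\}_{s\in S}$ of continuous unitary sections if and only if $L$ is topologically trivial.

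For the easy direction, if $L$ is trivial and $\sigma$ is a global continuous unitary section, I would simply set $\sigma_s\defeq\sigma\rest{s}$; these are manifestly coherent, so the resulting $\omega$ satisfies~\eqref{eq:SiebensCondition}. For the converse, assume a coherent family $\{\sigma_s\}$ exists. Since $S$ covers $\G$, for $\g\in\G$ pick $s\in S$ with $\g\in s$ and set $\sigma(\g)\defeq\sigma_s(\g)$; this is well defined because $S$ is wide (condition~\eqref{eq:BasisCondition}): if $\g\in s\cap t$, choose $r\in S$ with $\g\in r\sbe s\cap t$, and then coherence gives $\sigma_s(\g)=\sigma_r(\g)=\sigma_t(\g)$. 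Continuity of $\sigma$ is a local matter and follows from continuity of each $\sigma_s$ on the open set $s$, so $\sigma$ is a global continuous unitary section, i.e.\ $L$ is topologically trivial. The main obstacle, and the one genuinely topological step, is exactly this gluing: one must use the wideness of $S$ to force agreement on overlaps and hence produce a global section from purely local data. Finally, the equivalence of topological triviality of $L$ and of the twist $\Sigma$ is the standard correspondence between a principal circle bundle and its associated line bundle: a global continuous unitary section of $L=(\Sigma\times\C)/\Torus$ is the same datum as a global continuous equivariant trivialization of $\Sigma\onto\G$, which exists exactly when $\Sigma\cong\Torus\times\G$.
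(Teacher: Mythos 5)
Your proposal is correct and follows essentially the same route as the paper's proof: both reduce Sieben's condition, via Proposition~\ref{prop:CoherentHomogeneous}, to the coherence relation $u_r\leq u_s$ for $r\leq s$ (i.e.\ that the unitary sections restrict compatibly), then glue the local sections into a global unitary section of $L$ using the wideness condition~\eqref{eq:BasisCondition}, with the converse obtained by restricting a global section. The only addition is your closing remark on the equivalence between triviality of $L$ and of $\Sigma$, which the paper also treats as standard.
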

\begin{proof}
We shall use the same notation as in the proof of Theorem~\ref{theo:CorrespondenceTwistedGroupoidsAndTwistedActions}.
By definition, $(\beta,\omega)$ is the twisted action associated to the regular Fell bundles $\A$ as in Section~\ref{sec:regular Fell bundles}.
Thus, the cocycles $\omega(s,t)$ are given by $u_su_tu_{st}^*$
for a certain choice of unitary multipliers $u_s$ of $\A_s=\contz(L_s)$.
Since $\mult(\contz(L_s))\cong \contb(L_s)$, $u_s$ may be viewed as a unitary section of $L_s$.
Suppose the cocycles $\omega(s,t)$ satisfy~\eqref{eq:SiebensCondition}. We are going to prove that $L$ is topologically trivial, that is, that
there is a global continuous unitary section of $L$. By Proposition~\ref{prop:CoherentHomogeneous},
the family $\{u_s\}_{s\in S}$ of unitary sections satisfy $u_s\leq u_t$ whenever $s,t\in S$ with $s\leq t$.
This means that $u_s$ is the restriction of $u_t$ whenever $s\sbe t$ (this is the order relation of $S\sbe S(\G)$).
Now if $s,t$ are arbitrary elements of $S$,
the sections $u_s$ and $u_t$ have to coincide on the intersection $s\cap t$. If fact, since $S$ satisfies~\eqref{eq:BasisCondition},
given $\g\in s\cap t$, there is $r\in S$ such that $\g\in r\sbe s\cap t$. Hence $r\sbe s$ and $r\sbe t$, so that $u_s(\g)=u_r(\g)=u_t(\g)$.
Thus the map $u\colon\G\to L$ defined by
\begin{equation*}
u(\g)\defeq u_s(\g) \quad\mbox{whenever $s$ is an element of $S$ containing }\g\in \G
\end{equation*}
is a well-defined continuous unitary section of $L$ and therefore $L$ is topologically trivial.
Conversely, if $L$ is topologically trivial and $u\colon \G\to L$ is a continuous unitary section, then we may take $u_s$ to
be the restriction of $u$ to the bisection $s\sbe\G$. In this way $u_s$ is the restriction of $u_t$ whenever $s\sbe t$. Again by Proposition~\ref{prop:CoherentHomogeneous}, the cocycles $\omega(s,t)=u_su_tu_{st}^*$ satisfy Sieben's condition~\eqref{eq:SiebensCondition}.
\end{proof}

It is well-known (see \cite[Section 2]{Muhly.Williams.Continuous.Trace.Groupoid})
that the topologically trivial twists are exactly those associated to
a $2$-cocycle $\tau\colon \Gt\to \Torus$ in the sense of Renault \cite{RenaultThesis}.
Moreover, by Example~2.1 in \cite{Muhly.Williams.Continuous.Trace.Groupoid} there are twisted groupoids that are not topologically trivial.
This example shows that Sieben's condition~\eqref{eq:SiebenAxiomTwistedAction} cannot be expected to hold in general and therefore our notion of twisted action (Definition~\ref{def:twisted action}) properly generalizes Sieben’s \cite[Definition~2.2]{SiebenTwistedActions}.

In what follows we briefly describe the twists associated to $2$-cocycles and relate them to our cocycles $\omega(s,t)$.
Let $\tau\colon \Gt\to \Torus$ be a $2$-cocycle. Recall that $\tau$ satisfies the cocycle condition
\begin{equation*}
\tau(\alpha,\beta)\tau(\alpha\beta,\gamma)=\tau(\beta,\gamma)\tau(\alpha,\beta\gamma)\quad\mbox{for all }
        (\alpha,\beta),(\beta,\gamma)\in \Gt.
\end{equation*}
It is interesting to observe the similarity of this condition with our cocycle condition
appearing in Definition~\ref{def:twisted action}\eqref{def:twisted action:item:CocycleCondition}.
In addition, we also assume that the $2$-cocycle $\tau$ is \emph{normalized} in the sense that
(compare with Definition~\ref{def:twisted action}\eqref{def:twisted action:item:omega(r,r*r)=omega(e,f)=1_ef_and_omega(rr*,r)=1})
\begin{equation*}
\tau(\alpha,\s(\alpha))=\tau(\r(\alpha),\alpha)=1\quad\mbox{for all }\alpha\in \G.
\end{equation*}
Given a $2$-cocycle $\tau$ on $\G$ as above, the associated twisted groupoid $(\G,\Sigma)$ is defined as follows. Topologically, $\Sigma$ is the trivial circle bundle $\Torus\times\G$. And the operations are defined by
\begin{equation}\label{eq:multiplicationTwistedGroupoid}
(\lambda,\alpha)\cdot (\mu,\beta)=(\lambda\mu\tau(\alpha,\beta),\alpha\beta)\quad\mbox{for all }\lambda,\mu\in \Torus\mbox{ and }(\alpha,\beta)\in \Gt
\end{equation}
\begin{equation}\label{eq:inversionTwistedGroupoid}
(\lambda,\alpha)\inv =(\overline{\lambda \tau(\alpha\inv,\alpha)},\alpha\inv)\quad\mbox{for all }\lambda\in\Torus\mbox{ and }\alpha\in \G.
\end{equation}
In this way, $\Sigma$ is a topological groupoid and the trivial maps $\Torus\times \Gz\into \Sigma$ and
$\Sigma\onto \G$ give us a groupoid extension as
in~\eqref{eq:ExtensionTwistedGroupoid}. The twisted groupoid $(\G,\Sigma)$ corresponds
to the (topologically trivial) Fell line bundle $L=\C\times\G$ with algebraic
operations of multiplication and involution given by the same formulas as in equations~\eqref{eq:multiplicationTwistedGroupoid} and~\eqref{eq:inversionTwistedGroupoid} (only replacing $\Torus$ by $\C$ and the inversion $\inv$ on the
left hand side of~\eqref{eq:inversionTwistedGroupoid} by the involution $^*$ sign).

\begin{proposition}\label{prop:RelationRenaultAndSiebensCocycles}
Let $(\G,\Sigma)$ be the twisted groupoid associated to a $2$-cocycle $\tau$ on $\G$ as above,
and let $(\contz\big(\Gz\big),S,\beta,\omega)$ be the twisted action associated to $(\G,\Sigma)$
as in Theorem~\ref{theo:CorrespondenceTwistedGroupoidsAndTwistedActions}
through the unitary sections $u_s$ of $L_s=\C\times s$ given by $u_s(\gamma)=(1,\gamma)$ for all $\gamma\in s$.
Then
\begin{equation}
\beta_s(a)(\r(\g))=a(\s(\g))\quad\mbox{and}\quad \omega(s,t)(\r(\alpha\beta))=\tau(\alpha,\beta)
\end{equation}
for all $s,t\in S$, $a\in \contz(s^*s)$, $\alpha,\g\in s$ and $\beta\in t$ with $\s(\alpha)=\r(\beta)$.
\end{proposition}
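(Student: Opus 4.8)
The plan is to compute $\beta_s$ and $\omega(s,t)$ directly from their defining formulas from Section~\ref{sec:regular Fell bundles}, namely $\beta_s(a)=u_s\cdot a\cdot u_s^*$ and $\omega(s,t)=u_s\cdot u_t\cdot u_{st}^*$, where all products are convolutions in the Fell bundle $\A=\{\contz(L_s)\}_{s\in S}$ and $u_s(\g)=(1,\g)$. Two preliminary observations set up the calculation. First, since $\tau$ is normalized one has $\tau(x,x)=1$ for every unit $x\in\Gz$ (take $\alpha=x$ in $\tau(\r(\alpha),\alpha)=1$), so by the multiplication and involution formulas for $L=\C\times\G$ the restriction of $L$ to $\Gz$ is the trivial $\C$\nb-algebra bundle with pointwise operations; this is precisely the identification $\A_{s^*s}=\contz(L_{s^*s})\cong\contz(s^*s)$ under which a function $a$ corresponds to the section $x\mapsto(a(x),x)$. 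Second, using the involution $\xi^*(\delta)=\xi(\delta\inv)^*$ on sections together with the involution formula of $L$, I would record
\begin{equation*}
u_s^*(\delta)=u_s(\delta\inv)^*=(1,\delta\inv)^*=(\conj{\tau(\delta,\delta\inv)},\delta)\qquad\text{for }\delta\in s^*.
\end{equation*}

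Next I would exploit that $s$, $t$ and $st$ are bisections, so each convolution product collapses to the single term coming from the unique admissible factorization. For $\g\in s$ the unit $\r(\g)\in ss^*$ factors uniquely as $\g\cdot\s(\g)\cdot\g\inv$ (with the three factors lying in $s$, $s^*s$, $s^*$ respectively), whence
\begin{equation*}
\beta_s(a)(\r(\g))=u_s(\g)\cdot(a(\s(\g)),\s(\g))\cdot u_s^*(\g\inv).
\end{equation*}
Multiplying out in $L$ and using the normalization $\tau(\g,\s(\g))=1$, this collapses to $\bigl(a(\s(\g))\cdot\conj{\tau(\g\inv,\g)}\cdot\tau(\g,\g\inv),\,\r(\g)\bigr)$. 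Entirely analogously, for $\alpha\in s$, $\beta\in t$ with $\s(\alpha)=\r(\beta)$, the unit $\r(\alpha\beta)\in st(st)^*$ factors uniquely as $\alpha\cdot\beta\cdot(\alpha\beta)\inv$, giving $\omega(s,t)(\r(\alpha\beta))=u_s(\alpha)\cdot u_t(\beta)\cdot u_{st}^*((\alpha\beta)\inv)$, which after multiplication equals $\bigl(\tau(\alpha,\beta)\cdot\conj{\tau((\alpha\beta)\inv,\alpha\beta)}\cdot\tau(\alpha\beta,(\alpha\beta)\inv),\,\r(\alpha\beta)\bigr)$.

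To close both computations I need the symmetry $\tau(\g,\g\inv)=\tau(\g\inv,\g)$ valid for every $\g\in\G$. I would derive it by specializing the cocycle identity to $(\alpha,\beta,\gamma)=(\g,\g\inv,\g)$, which reads $\tau(\g,\g\inv)\tau(\r(\g),\g)=\tau(\g\inv,\g)\tau(\g,\s(\g))$, and then invoking normalization to kill $\tau(\r(\g),\g)$ and $\tau(\g,\s(\g))$. Since $\tau$ takes values in $\Torus$, this symmetry yields $\conj{\tau(\g\inv,\g)}\cdot\tau(\g,\g\inv)=1$ and, with $\g$ replaced by $\alpha\beta$, also $\conj{\tau((\alpha\beta)\inv,\alpha\beta)}\cdot\tau(\alpha\beta,(\alpha\beta)\inv)=1$. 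Substituting back, the two scalars above reduce to $a(\s(\g))$ and $\tau(\alpha,\beta)$, so that $\beta_s(a)(\r(\g))=(a(\s(\g)),\r(\g))$ and $\omega(s,t)(\r(\alpha\beta))=(\tau(\alpha,\beta),\r(\alpha\beta))$, which under the trivialization of $L$ over $\Gz$ are exactly the asserted formulas. I expect the only real difficulty to be bookkeeping: keeping the unique bisection factorizations straight and carefully tracking the several $\tau$\nb-factors until the normalization and the symmetry identity make them cancel.
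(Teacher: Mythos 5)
Your proposal is correct and follows essentially the same route as the paper's proof: a direct convolution computation using the unique bisection factorizations $\r(\g)=\g\cdot\s(\g)\cdot\g\inv$ and $\r(\alpha\beta)=\alpha\cdot\beta\cdot(\alpha\beta)\inv$, the normalization of $\tau$, and the symmetry $\tau(\g,\g\inv)=\tau(\g\inv,\g)$. The only difference is that you explicitly derive that symmetry from the cocycle identity, whereas the paper merely labels it ``easily verified''.
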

\begin{proof}
Note that $\dom(\beta_s)=\D_{s^*s}=\contz(s^*s)$ and $\ran(\beta_s)=\D_{ss^*}=\contz(ss^*)$.
Using the definitions \eqref{eq:multiplicationTwistedGroupoid} and \eqref{eq:inversionTwistedGroupoid}
and the (easily verified) relation $\tau(\g\inv,\g)=\tau(\g,\g\inv)$, and using the canonical identification $\contz(L_{s^*s})\cong\contz(s^*s)$
to view $a$ as a continuous section of $L_{s^*s}$, we get
\begin{align*}
\beta_s(a)(\r(\g))&=(u_sau_s^*)(\g\s(\g)\g\inv)=u_s(\g)\cdot a(\s(\g))\cdot u_s(\g)^*\\
                    &= (1,\g)\cdot(a(\s(\g)),\s(\g))\cdot(1,\g)^*\\
                    &=(a(\s(\g))\tau(\g,\s(\g)),\g)\cdot (\overline{\tau(\g\inv,\g)},\g\inv)\\
                    &=(a(\s(\g))\overline{\tau(\g\inv,\g)}\tau(\g,\g\inv),\s(\g))=a(\s(\g)).
\end{align*}
To yield the relation between the cocycles $\omega(s,t)$ and $\tau(\alpha,\beta)$,
first observe that $\omega(s,t)$ is a unitary element of $\contb(stt^*s^*)\cong\contz(L_{stt^*s^*})$, that is,
a continuous function $\omega(s,t)\colon stt^*s^*\to \Torus$. Now, since $s,t$ are bisections, every element of $stt^*s^*$
can be uniquely written as $\alpha\beta\beta\inv\alpha\inv=\r(\alpha\beta)$ for $\alpha\in s$ and $\beta\in t$. Thus
\begin{align*}
\omega(s,t)(\r(\alpha\beta))&=(u_su_tu_{st}^*)(\alpha\beta\beta\inv\alpha\inv)\\
                                    &=u_s(\alpha)\cdot u_t(\beta)\cdot u_{st}(\alpha\beta)^*\\
                                    &=(1,\alpha)\cdot (1,\beta)\cdot (1,\alpha\beta)^*\\
                                    &=(\tau(\alpha,\beta),\alpha\beta)\cdot (\overline{\tau((\alpha\beta)\inv,\alpha\beta)},(\alpha\beta)\inv)\\
                                    &=(\tau(\alpha,\beta)\overline{\tau((\alpha\beta)\inv,\alpha\beta)}\tau(\alpha\beta,(\alpha\beta)\inv),\alpha\beta\beta\inv\alpha\inv)\\
                                    &=(\tau(\alpha,\beta),\r(\alpha\beta))=\tau(\alpha,\beta).
\end{align*}
\vskip-18pt
\end{proof}

Summarizing the results of this section, we have seen how to describe twisted étale groupoids in terms of inverse semigroup twisted
actions. While the $2$-cocycles on groupoids can only describe topologically trivial twisted groupoids, our twists have not such a limitation and
allow us to describe arbitrary twisted étale groupoids. As we have seen above, the topologically trivial twisted étale groupoids
essentially correspond to Sieben's twisted actions, a special case of our theory.

\section{Refinements of Fell bundles}
\label{sec:Refinements}

In this section, we introduce a notion of \emph{refinement} for Fell bundles and prove that several
constructions from Fell bundles, including cross-sectional \cstar{}algebras and twisted groupoids (in the semi-abelian case),
are preserved under refinements. Our main point in this section is to prove that locally regular Fell bundles admit a regular, saturated refinement.
This puts every locally regular Fell bundle into the setting of
Section~\ref{sec:regular Fell bundles} and enables us to describe it as a twisted action.

\begin{definition}\label{def:refinement}
Let $S,T$ be inverse semigroups and let $\A=\{\A_s\}_{s\in S}$ and $\B=\{\B_t\}_{t\in T}$ be Fell bundles.
A \emph{morphism} from $\B$ to $\A$ is a pair $(\phi,\psi)$, where $\phi\colon T\to S$ is a semigroup homomorphism,
and $\psi\colon\B\to \A$ is a map satisfying:
\begin{enumerate}[(i)]
\item $\psi(\B_t)\sbe \A_{\phi(t)}$ and the restriction $\psi_t\colon\B_t\to \A_{\phi(t)}$ is a linear map;
\item $\psi$ respects product and involution: $\psi(ab)=\psi(a)\psi(b)$ and $\psi(a^*)=\psi(a)^*$, for all $a,b\in \B$;
\item $\psi$ commutes with the inclusion maps: whenever $t\leq t'$ in $T$, we get a commutative diagram \label{def:refinement:inclusionmaps}
\[
\xymatrix{
    \B_t\ar[dd]_{\psi_t}\ar[rr]^{j_{t',t}^{\B}} &   & \B_{t'}\ar[dd]^{\psi_{t'}} \\
                                                &   &                            \\
    \A_{\phi(t)}\ar[rr]_{j_{\phi(t'),\phi(t)}^\A} &  & \A_{\phi(t')}
}
\]
We say that $\B$ is a \emph{refinement} of $\A$ if there is a morphism $(\phi,\psi)$ from $\B$ to $\A$ with
$\phi\colon T\to S$ surjective and \emph{essentially injective} in the sense that $\phi(t)\in E(S)$ implies $t\in E(T)$,
with $\psi_t\colon\B_t\to\A_{\phi(t)}$ injective for all $t\in T$, and such that
\begin{equation}\label{eq:ConditionRefinement}
\A_s=\overline{\sum\limits_{t\in \phi^{-1}(s)}\!\!\psi(\B_t)}\quad\mbox{for all }s\in S.
\end{equation}
\end{enumerate}
\end{definition}

\begin{remark}\label{rem:Refinement}
{\bf (1)} If $\B=\{\B_t\}_{t\in T}$ is a refinement of $\A=\{\A_s\}_{s\in S}$ via some morphism $(\phi,\psi)$,
then $\psi(\B_t)$ is an ideal of $\A_{\phi(t)}$ (as {\tros}) for all $t\in T$ (see comments before Definition~\ref{def:TROLocallyRegular}).
In fact, it is enough to check
that $\psi(\B_f)$ is an ideal of $\A_{e}$ for every idempotent $f\in T$ with $\phi(f)=e$.
Equation~\eqref{eq:ConditionRefinement} implies
\begin{equation*}
\A_e=\overline{\sum\limits_{\phi(g)=e}\!\!\psi(\B_g)}.
\end{equation*}
Since $\phi$ is essentially injective, each $g\in T$ with $\phi(g)=e$ is necessarily idempotent.
Hence,
\begin{equation*}
\psi(\B_f)\A_e=\overline{\sum\limits_{\phi(g)=e}\!\!\psi(\B_f)\psi(\B_g)}=\overline{\sum\limits_{\phi(g)=e}\!\!\psi(\B_f\B_g)}\sbe
\overline{\sum\limits_{\phi(g)=e}\!\!\psi(\B_{fg})}.
\end{equation*}
Given $g\in E(T)$ with $\phi(g)=e$, we have $fg\leq f$. Definition~\ref{def:refinement}\eqref{def:refinement:inclusionmaps} yields
\begin{equation*}
\psi(\B_{fg})=j_{e,e}^\A\big(\psi(\B_{fg})\big)=\psi\big(j_{f,fg}^\B(\B_{fg})\big)\sbe \psi(\B_f).
\end{equation*}
It follows that $\psi(\B_f)\A_e\sbe\psi(\B_f)$. Similarly, $\A_e\psi(\B_f)\sbe\psi(\B_f)$.

{\bf (2) }In the realm of (discrete) groups the notion of refinement is not interesting.
Indeed, if $S,T$ are groups and $\phi\colon T\to S$ is an essentially injective, surjective homomorphism, then
is it is automatically an isomorphism because the only idempotents are the group identities.
Hence for groups $S,T$ and Fell bundles $\A$ and $\B$ over $S$ and $T$, respectively, a refinement $(\phi,\psi)$ from $\B$ to $\A$
is the same as an isomorphism $(\phi,\psi)\colon(T,\B)\congto (S,\A)$.
However, we may have a Fell bundle $\A=\{\A_s\}_{s\in S}$ over a group $S$, and an interesting refinement
$\B=\{\B_t\}_{t\in T}$ allowing $T$ to be an inverse semigroup. For instance, we are going to prove (Proposition~\ref{prop:RefinementForLocRegularFellBundle}) that every Fell bundle admits a saturated refinement and this can, of course, be applied to Fell bundle over groups, but one has to allow the refinement itself to be a Fell bundle over an
inverse semigroup.
\end{remark}

First, we show that refinements preserve the Fell bundle cross-sectional \cstar{}algebras (see \cite{Exel:noncomm.cartan}
for details on the construction of these algebras).

\begin{theorem}\label{theo:RefinementPreserveFullC*Algebras}
Let $\B=\{\B_t\}_{t\in T}$ be a refinement of $\{\A_s\}_{s\in S}$ through a morphism $(\phi,\psi)$,
and let $\E_\B$ and $\E_\A$ be the restrictions of $\B$ and $\A$ to the idempotent parts of $T$ and $S$, respectively.
Then there is a \textup(unique\textup) isomorphism $\Psi\colon C^*(\B)\congto C^*(\A)$ satisfying $\Psi(b)=\psi(b)$ for all $b\in \B$.
Here we view each fiber $\B_t$ \textup(resp. $\A_s$\textup) as a subspace
of $C^*(\B)$ \textup(resp. $C^*(\A)$\textup) via the universal representation.
Moreover, $\Psi$ factors through an isomorphism $C^*_\red(\B)\congto C^*_\red(\A)$ and
restricts to an isomorphism $\Psi\rest{}\colon C^*(\E_\B)\congto C^*(\E_\A)$
\end{theorem}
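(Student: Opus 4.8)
The plan is to build the isomorphism $\Psi$ directly from the universal properties of the cross-sectional $C^*$-algebras, using the morphism $(\phi,\psi)$ to transport representations back and forth. Recall that $C^*(\B)$ carries a universal representation $\iota_\B\colon\B\to C^*(\B)$ and $C^*(\A)$ a universal representation $\iota_\A\colon\A\to C^*(\A)$, and that representations of the Fell bundle integrate to $*$-homomorphisms of the cross-sectional algebra. The composite $\iota_\A\circ\psi\colon\B\to C^*(\A)$ is, I claim, a representation of $\B$ in the sense of Definition~\ref{def:RepresentationFellBundle}: it respects products and involution because $\psi$ does (by Definition~\ref{def:refinement}(ii)) and because $\iota_\A$ does, and it is compatible with the inclusion maps $j^\B_{t',t}$ because $\psi$ intertwines the inclusion maps of $\B$ and $\A$ (Definition~\ref{def:refinement}(iii)) while $\iota_\A$ is compatible with those of $\A$. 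By the universal property of $C^*(\B)$, this integrates to a $*$-homomorphism $\Psi\colon C^*(\B)\to C^*(\A)$ with $\Psi(b)=\psi(b)$ for all $b\in\B$, which establishes existence; uniqueness is immediate since the fibers $\B_t$ generate $C^*(\B)$.

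The crux is \textbf{surjectivity and injectivity} of $\Psi$. Surjectivity follows from the refinement condition~\eqref{eq:ConditionRefinement}: for each $s\in S$ the fiber $\A_s$ is the closed sum $\overline{\sum_{t\in\phi^{-1}(s)}\psi(\B_t)}$, so every $\A_s$, and hence all of $C^*(\A)$, lies in the closed range of $\Psi$. Injectivity is the main obstacle and I would handle it by exhibiting an inverse. The natural candidate comes from the restriction isomorphism: since $\phi$ is essentially injective, the fibers of $\B$ over idempotents map into fibers of $\A$ over idempotents, and Remark~\ref{rem:Refinement}(1) shows each $\psi(\B_f)$ is an ideal of $\A_{\phi(f)}$ with $\A_e=\overline{\sum_{\phi(g)=e}\psi(\B_g)}$. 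An application of Proposition~4.3 of~\cite{Exel:noncomm.cartan} (the same tool used after Corollary~\ref{cor:CorrespondenceRegularFellBundlesAndTwistedActions}, identifying $C^*(\E)$ with a $C^*$-algebra generated by a dense sum of ideals) then yields the restricted isomorphism $\Psi|\colon C^*(\E_\B)\congto C^*(\E_\A)$. With the coefficient algebras identified, I would construct a representation of $\A$ into $C^*(\B)$ fiberwise—choosing, for each $s$ and each $t\in\phi^{-1}(s)$, the inverse of the injective map $\psi_t$ on $\psi(\B_t)$ and patching these over the ideal decomposition of $\A_s$—and check it integrates to a $*$-homomorphism $C^*(\A)\to C^*(\B)$ inverse to $\Psi$.

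The \textbf{hardest technical point} is verifying that this candidate inverse is well defined and is genuinely a Fell-bundle representation: one must confirm that the local inverses of the $\psi_t$ agree on overlaps of the ideals $\psi(\B_t)\sbe\A_s$ and assemble into a map that is multiplicative, $*$-preserving, and compatible with the inclusion maps of $\A$. This is where essential injectivity of $\phi$ and the ideal structure from Remark~\ref{rem:Refinement}(1) do the real work, since they force the summands $\psi(\B_t)$ to interact coherently under multiplication and involution. Once both $\Psi$ and its inverse are shown to be $*$-homomorphisms fixing the fibers, they are mutually inverse isomorphisms.

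Finally, the \textbf{reduced-level statement} follows formally. The regular representation of a Fell bundle is defined in~\cite[Section~8]{Exel:noncomm.cartan} intrinsically from the bundle and its restriction to idempotents; since $\Psi$ identifies $\B$ with $\A$ fiberwise and $C^*(\E_\B)$ with $C^*(\E_\A)$, it carries the regular representation of $\B$ to that of $\A$. Hence $\Psi$ descends to the quotients, giving the isomorphism $C^*_\red(\B)\congto C^*_\red(\A)$, and the restriction $\Psi|\colon C^*(\E_\B)\congto C^*(\E_\A)$ is exactly the identification already obtained above.
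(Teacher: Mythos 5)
Your construction of $\Psi$ and the surjectivity argument coincide with the paper's. For injectivity, your plan --- invert the maps $\psi_t$ fiberwise and patch over the decomposition $\A_s=\overline{\sum_{t\in\phi^{-1}(s)}\psi(\B_t)}$ to obtain a representation of $\A$ in $C^*(\B)$ --- is essentially the paper's argument specialized to the universal representation: the paper shows that \emph{every} representation $\rho$ of $\B$ arises as $\pi\circ\psi$ for a representation $\pi$ of $\A$, defined by $\pi_s\bigl(\sum_t\psi(b_t)\bigr)\defeq\sum_t\rho(b_t)$. The point you flag but do not resolve (well-definedness and contractivity of the patching) is settled there by the $C^*$\nb-identity: for $b=\sum_t\psi(b_t)$ one has $\|b\|^2=\bigl\|\sum_{t,r}\psi(b_t^*b_r)\bigr\|$; essential injectivity of $\phi$ forces each $t^*r$ to be idempotent, so that $\sum_{t,r}b_t^*b_r$ lies in $C^*(\E_\B)$, where the restriction of $\Psi$ is isometric by \cite[Proposition~4.3]{Exel:noncomm.cartan}; hence $\bigl\|\sum_t\rho(b_t)\bigr\|^2\leq\bigl\|\sum_{t,r}b_t^*b_r\bigr\|=\|b\|^2$. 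Up to this repackaging your route is the paper's, and it would close once that estimate is written down.

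The genuine gap is the reduced statement, which does not ``follow formally.'' The regular representation of $\A$ is the direct sum of the GNS representations of the canonical extensions $\tilde\varphi$ of pure states $\varphi$ of $C^*(\E_\A)$, and the canonical extension is computed from the \emph{support} of the state inside the idempotent semilattice --- which is $E(S)$ for $\A$ but $E(T)$ for $\B$, and these differ. What must be proved is the identity $\widetilde{\varphi\circ\Psi\rest{}}=\tilde\varphi\circ\Psi$ for every pure state $\varphi$ of $C^*(\E_\A)$, and this is not automatic: given $t\in T$ and $b\in\B_t$, it can happen that no $f\in\supp(\varphi\circ\Psi\rest{})$ satisfies $f\leq t$ (so the left-hand side vanishes on $b$ by \cite[Proposition~7.4]{Exel:noncomm.cartan}) while some $e\in\supp(\varphi)$ satisfies $e\leq\phi(t)$, so that the right-hand side is a genuine limit $\lim_j\varphi(\psi(b)v_j)$ over an approximate unit of $\A_e$. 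One must then invoke $\A_e=\overline{\sum_{\phi(g)=e}\psi(\B_g)}$ and essential injectivity (to see that $tg\in E(T)$, hence $tg\notin\supp(\varphi\circ\Psi\rest{})$, hence $\varphi(\psi(b u_n))=0$ for $u_n\in\B_g$) to conclude that this limit is $0$ as well. Without this case analysis you have not shown that $\Psi$ matches up the kernels of the two regular representations, which is the substance of the reduced claim.
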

\begin{proof}
Since $(\phi,\psi)$ is a morphism from $\B$ to $\A$, it induces a (unique) \Star{}ho\-mo\-mor\-phism
$\Psi\colon C^*(\B)\to C^*(\A)$ satisfying $\Psi(b)=\psi(b)$ for all $b\in \B$.
Moreover, it also induces a map from $\Rep(\A)$ to $\Rep(\B)$ (the classes of representations of $\A$ and $\B$, respectively)
that takes $\pi\in \Rep(\A)$ and associates the representation $\tilde\pi\defeq \pi\circ\psi\in \Rep(\B)$.
All this holds for any morphism of Fell bundles. Now, to prove that the induced map $\Psi$ is an isomorphism,
we need to use the extra properties of refinement. The surjectivity of $\Psi$ follows from
Equation~\eqref{eq:ConditionRefinement}. In fact, this equation implies that
$\Psi$ has dense image, and since any \Star{}homomorphism between $C^*$-algebras has closed image,
the surjectivity of $\Psi$ follows. To show that $\Psi$ is injective, it is enough to show that
any representation $\rho\in \Rep(\B)$ has the form $\rho=\tilde\pi$ for some representation $\pi\in \Rep(\A)$
(necessarily unique by Equation~\eqref{eq:ConditionRefinement}).
Given $\rho\in \Rep(\B)$ and $s\in S$, we define
\begin{equation*}
\pi_s(b)\defeq \sum\limits_{t\in \phi^{-1}(s)}\rho(b_t),
\end{equation*}
whenever $b\in \A_s$ is a finite sum of the form
\begin{equation*}
b=\sum\limits_{t\in \phi^{-1}(s)}\psi(b_t)
\end{equation*}
with all but finitely many non-zero $b_t$'s in $\B_t$. All we have to show is that $\pi_s$ is well-defined and extends
to $\A_s$. Since $\A_s$ is the closure of $b$'s as above, it is enough to show that
\begin{equation*}
\left\|\sum\limits_{t\in \phi^{-1}(s)}\rho(b_t)\right\|\leq \|b\|.
\end{equation*}
First, note that
\begin{equation*}
\|b\|^2=\left\|\sum\limits_{t,r\in \phi^{-1}(s)}\psi(b_t^*b_r)\right\|.
\end{equation*}
Given $t,s\in \phi^{-1}(s)$, we have $\phi(t^*r)=s^*s$, and because $\phi$ is essentially injective, this implies that $t^*r$ is idempotent.
Consequently, we may view $\sum\limits_{t,r\in \phi^{-1}(s)}b_t^*b_r$ as an element of $C^*(\E_\B)$.
Using \cite[Proposition 4.3]{Exel:noncomm.cartan}, it is easy to see that the \Star{}homomorphism
$\Psi\colon C^*(\B)\to C^*(\A)$ is injective (hence isometric) on $C^*(\E_\B)$.
Therefore,
\begin{multline*}
\|b\|^2=\left\|\sum\limits_{t,r\in \phi^{-1}(s)}\psi(b_t^*b_r)\right\|=\left\|\psi\left(\sum\limits_{t,r\in \phi^{-1}(s)}b_t^*b_r\right)\right\|
        \\ =\left\|\sum\limits_{t,r\in \phi^{-1}(s)}b_t^*b_r\right\|\geq \left\|\sum\limits_{t,r\in \phi^{-1}(s)}\rho(b_t^*b_r)\right\|
        =\left\|\sum\limits_{t\in \phi^{-1}(s)}\rho(b_t)\right\|^2.
\end{multline*}
Thus $\pi_s$ is well defined and extends to a (obviously linear) map $\pi_s\colon\A_s\to \bound(\hils_\rho)$.
Since $s$ is arbitrary, we get a map $\pi\colon\A\to \bound(\hils_\rho)$ which is easily seen to be a representation because $\rho$ is.
And, of course, we have $\tilde\pi=\rho$. This shows that $\Psi$ is injective and, therefore,
an isomorphism $C^*(\B)\to C^*(\A)$. It is clear that it restricts to an isomorphism
$\Psi\rest{}\colon C^*(\E_\B)\to C^*(\E_\A)$.

Finally, we show that $\Psi$ factors through an isomorphism $C^*_\red(\B)\congto C^*_\red(\A)$.
First, let us recall that the reduced cross-sectional \cstar{}algebra $C^*_\red(\A)$ is the image of $C^*(\A)$ by the
regular representation $\Lambda_\A\colon C^*(\A)\to C^*_\red(\A)$ of $\A$ (see \cite[Proposition~8.6]{Exel:noncomm.cartan}),
which is defined as the direct sum of all GNS-representations associated
to states $\tilde\varphi$ of $C^*(\A)$, where $\varphi$ runs over the set of all pure states of $C^*(\E_\A)$
and $\tilde\varphi$ is the canonical extension of $\varphi$ as defined in \cite[Section~7]{Exel:noncomm.cartan}.
Of course, the same is true for the regular representation $\Lambda_\B\colon C^*(\B)\to C^*_\red(\B)$ of $\B$.
Since $\Psi\rest{}\colon C^*(\E_\B)\to C^*(\E_\A)$ is an isomorphism, the assignment $\varphi\mapsto \varphi\circ\Psi\rest{}$ defines
a bijective correspondence between pure states of $C^*(\E_A)$ and $C^*(\E_\B)$. To prove that $\Psi$ factors through an
isomorphism $C^*_\red(\B)\to C^*_\red(\A)$ it is enough to show that the canonical extension of $\varphi\circ\Psi\rest{}$ coincides with
$\tilde{\varphi}\circ\Psi$, that is, $\widetilde{\varphi\circ\Psi\rest{}}=\tilde{\varphi}\circ\Psi$ for all pure states $\varphi$ of $C^*(\E_\A)$.
Let $t\in T$ and $b\in \B_t$. According to Proposition~7.4(i) in \cite{Exel:noncomm.cartan}, we have two cases to consider:

\emph{Case 1.} Assume there is an idempotent $f\in E(T)$ lying in $\supp(\varphi\circ\Psi\rest{})$, the support
of $\varphi\circ\Psi\rest{}$ (see \cite[Definition~7.1]{Exel:noncomm.cartan}), with $f\leq t$. Let $e\defeq \phi(f)$ and $s\defeq\phi(t)$.
Note that $e\leq s$. Moreover, since $\Psi\rest{}\colon C^*(\E_\B)\to C^*(\E_\A)$ is an isomorphism and $\varphi\circ\Psi\rest{}$ is
supported on $\B_f$, it follows that $\varphi$ is supported on $\psi(\B_f)$ and hence on $\A_e$ (by \cite[Proposition~5.3]{Exel:noncomm.cartan})
because $\psi(\B_f)$ is an ideal of $\A_e$ by Remark~\ref{rem:Refinement}(1). This implies that if $(u_i)$ is an approximate
unit for $\B_f$, then
\begin{equation*}
\lim_i\varphi\big(\psi(b)\psi(u_i)\big)=\tilde\varphi_e^s(\psi(b)).
\end{equation*}
Here use the same notation of \cite{Exel:noncomm.cartan} and write $\varphi_e$ for the restriction of
$\varphi$ to $\A_e$ and $\tilde\varphi_e^s$ for the canonical extension of $\varphi_e$ to $\A_s$ (see \cite[Proposition~6.1]{Exel:noncomm.cartan}).
It follows that
\begin{multline*}
\widetilde{\varphi\circ\Psi\rest{}}(b)=\big(\widetilde{\varphi\circ\Psi\rest{}}\big)_f^t(b)=\lim_i(\varphi\circ\Psi\rest{})(bu_i)\\
=\lim_i\varphi\big(\psi(b)\psi(u_i)\big)=\tilde\varphi_e^s(\psi(b))=\tilde\varphi\big(\psi(b)\big)=(\tilde\varphi\circ\Psi)(b).
\end{multline*}

\emph{Case 2.} Suppose there is no $f\in \supp(\varphi\circ\Psi\rest{})$ with $f\leq t$. In this
case, we have $\widetilde{\varphi\circ\Psi\rest{}}(b)=0$ by \cite[Proposition~7.4(i)]{Exel:noncomm.cartan}.
If, on the other hand, there is no $e\in \supp(\varphi)$ with $e\leq s\defeq \phi(t)$, then we also have $(\tilde\varphi\circ\Psi)(b)=0$.
Assume there is $e\in \supp(\varphi)$ such that $e\leq s$. Since $\phi$ is surjective, there is $g\in E(T)$ with
$\phi(g)=e$. Since $\phi(tg)=se=e$, we have $tg\in E(T)$ because $\phi$ is essentially injective.
Note that $tg\leq t$. By assumption, $tg\notin\supp(\varphi\circ\Psi\rest{})$, so that $(\varphi\circ\Psi\rest{})(\B_{tg})=\{0\}$ by
\cite[Proposition~5.5]{Exel:noncomm.cartan}. Let $(v_j)$ be an approximate unit for $\A_e$. Then
\begin{equation}\label{eq:tildeVarphiCircPsi}
(\tilde\varphi\circ\Psi)(b)=\tilde\varphi_e^s(\psi(b))=\lim_j\varphi(\psi(b)v_j).
\end{equation}
By~\eqref{eq:ConditionRefinement}, $\A_e$ is the closed linear span of $\psi(\B_g)$ with $\phi(g)=e$.
Thus each $v\in \A_e$ is a limit of finite sums of the form $\sum \psi(u_n)$ with $u_n\in \B_{g_n}$, where $g_n\in E(T)$ and $\phi(g_n)=e$.
Since $(\varphi\circ\Psi\rest{})(\B_{tg_n})=\{0\}$, we have $\varphi(\psi(bu_n))=0$ for all $n$.
It follows that $\varphi(\psi(b)v)=0$ for all $v\in \A_e$ and
hence Equation~\eqref{eq:tildeVarphiCircPsi} yields $(\tilde\varphi\circ\Psi)(b)=0$.

Therefore, in any case we have $\widetilde{\varphi\circ\Psi\rest{}}(b)=\tilde{\varphi}\circ\Psi(b)$ for all $b\in \B$
and hence for all $b\in C^*(\B)$, and this concludes the proof.
\end{proof}

Next, we show that any locally regular Fell bundle admits a regular, saturated refinement.

\begin{proposition}\label{prop:RefinementForLocRegularFellBundle}
\begin{enumerate}[(a)]
\item Every Fell bundle admits a saturated refinement.
\item Every locally regular Fell bundle admits a regular, saturated refinement.
\item If a Fell bundle admits a regular refinement, then it is locally regular.
\end{enumerate}
\end{proposition}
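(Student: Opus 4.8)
Throughout I may assume $\A=\{\A_s\}_{s\in S}$ is concrete, since every Fell bundle is isomorphic to a concrete one. The plan is to settle the two existence statements (a) and (b) by a single pairing device, and to read (c) off directly from the refinement axioms. For (a) and (b) I would build the refining inverse semigroup by tagging each $s\in S$ with an ideal (in the {\tro} sense) of the fibre $\A_s$. Fixing for each $s$ a set $\mathcal F_s$ of ideals of $\A_s$, I set
\[
T\defeq\{(s,\X)\colon s\in S,\ \X\in\mathcal F_s\},\quad (s,\X)(t,\mathcal Y)\defeq(st,\overline{\X\mathcal Y}),\quad (s,\X)^*\defeq(s^*,\X^*),
\]
where $\overline{\X\mathcal Y}$ is the closed linear span of $\X\mathcal Y$. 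The refining bundle is $\B_{(s,\X)}\defeq\X$ with all operations and inclusion maps inherited from $\A$, together with $\phi(s,\X)\defeq s$ and $\psi$ the inclusion $\X\hookrightarrow\A_s$. Indexing by ideals makes saturation tautological, since $\B_{(s,\X)}\B_{(t,\mathcal Y)}=\X\mathcal Y$ is by construction dense in $\B_{(s,\X)(t,\mathcal Y)}=\overline{\X\mathcal Y}$.

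Two closure lemmas carry the content. First, the closed span $\overline{\X\mathcal Y}$ of a product of ideals $\X$ of $\A_s$ and $\mathcal Y$ of $\A_t$ is again an ideal of $\A_{st}$; I would derive the two required invariances $\A_{st}\A_{st}^*\,\overline{\X\mathcal Y}\sbe\overline{\X\mathcal Y}$ and $\overline{\X\mathcal Y}\,\A_{st}^*\A_{st}\sbe\overline{\X\mathcal Y}$ from the nondegeneracy identities $\X=\overline{\A_s\A_s^*\X}$, $\mathcal Y=\overline{\mathcal Y\A_t^*\A_t}$, using that $I_s,J_s,J_t,J_{st},\dots$ are all ideals of $B=C^*(\E)$ (because each $\A_e$, $e\in E(S)$, is), so that e.g.\ $J_tJ_{st}\sbe J_t$ gives $\mathcal Y J_{st}\sbe\mathcal Y$. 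Hence the family $\mathcal F$ generated by any prescribed ideals under products and involutions consists of ideals and is closed under the operations of $T$. For (a) I take $\mathcal F_s$ generated by $\A_s$ itself: then $\phi$ is surjective and \eqref{eq:ConditionRefinement} holds because $\A_s\in\mathcal F_s$, and $\phi$ is essentially injective because over an idempotent $e$ the fibre $\A_e$ is a \cstar{}algebra, so every $(e,\X)\in T$ is idempotent. For (b), local regularity gives $\A_s=\overline{\sum_i\X^i_s}$ with each $\X^i_s$ a regular ideal; I seed $\mathcal F_s$ with these and close up, now invoking the second lemma: a product of regular ideals is regular. I would prove it by choosing $u\sim\X$, $v\sim\mathcal Y$ strictly associated as in Corollary~\ref{cor:RegularIFFPartialIsometry}, and verifying for $w\defeq uv$ the properties~\eqref{lem:ElementsAssociatedToTRO:item:M*u=M*M} and~\eqref{lem:ElementsAssociatedToTRO:item:uu*M=M} of Lemma~\ref{lem:ElementsAssociatedToTRO} relative to $\overline{\X\mathcal Y}$; using $\X^*u=\X^*\X$, $\X=u(\X^*\X)$ and the nondegeneracy identity $J_\X\mathcal Y=(J_\X\cap I_{\mathcal Y})\mathcal Y$ (with $J_\X=\X^*\X$, $I_{\mathcal Y}=\mathcal Y\mathcal Y^*$), both reduce to equalities of closed spans, and Lemma~\ref{lem:ElementsAssociatedToTRO} supplies the remaining two properties. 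Thus every fibre of $\B$ is regular, and \eqref{eq:ConditionRefinement} holds as before.

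Part (c) is the reverse direction and is short. If $\B$ is a regular refinement of $\A$ through $(\phi,\psi)$, then each $\psi_t$, being injective and $*$\nb-preserving, restricts to an injective \Star{}homomorphism on the idempotent fibre $\B_{t^*t}$, hence is isometric; so $\psi(\B_t)$ is a sub-{\tro} of $\A_{\phi(t)}$ that is \Star{}isomorphic to the regular {\tro} $\B_t$ and therefore itself regular, and by Remark~\ref{rem:Refinement}(1) it is an ideal of $\A_{\phi(t)}$. Now \eqref{eq:ConditionRefinement} displays $\A_s=\overline{\sum_{t\in\phi^{-1}(s)}\psi(\B_t)}$ as the closed linear span of regular ideals, so each $\A_s$ is locally regular and hence so is $\A$.

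The main obstacle is the bookkeeping that $T$ is genuinely an inverse semigroup and that $\B$ verifies every clause of Definition~\ref{def:Fell bundles over ISG}. The algebraic operations, the \cstar{}identity, and the isometry of the inclusions are inherited from $\A$; the one delicate axiom is the compatibility of the inherited inclusion maps with multiplication, axiom~\eqref{def:Fell bundles over ISG:item:CompatibilityMultiplicationWithInclusions}, which I would reduce to its one-sided forms via Lemma~\ref{lem:TechnicalLemmaDefFellBundle}. Beneath all of this lie the two closure lemmas above, where the real work is concentrated.
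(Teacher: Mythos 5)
Your proposal is correct and follows essentially the same route as the paper: there the refining inverse semigroup is likewise built from pairs $(s,\troa)$ with $\troa$ a {\tro} ideal sitting inside $\A_s$, multiplied via closed linear spans, with essential injectivity coming from the fact that over an idempotent $e$ every such $\troa$ is a two\nb-sided ideal of the \cstar{}algebra $\A_e$, and with the same two closure facts (the closed span of a product of such ideals is again one, and a product of regular {\tro}s is regular, witnessed by the product of associated elements) carrying the weight. The only cosmetic differences are that the paper admits \emph{all} compatible {\tro}s rather than a generated family and verifies regularity of the product by a direct computation instead of via Lemma~\ref{lem:ElementsAssociatedToTRO}; part (c) is argued identically through Remark~\ref{rem:Refinement}(1).
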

\begin{proof}
(a) Let $\A=\{\A_s\}_{s\in S}$ be a Fell bundle over an inverse semigroup $S$.
We may assume that $\A$ is a concrete Fell bundle in $\bound(\hils)$ and the $C^*$-algebras $B=C^*(\E)$ and $A=C^*(\A)$ are all realized as operators in $\bound(\hils)$ for some Hilbert space $\hils$. Consider the set
$S_B$ of all {\tro} $\troa\sbe\bound(H)$ satisfying
\begin{equation}\label{eq:InverseSemigroupOfTROs}
\troa B,B\troa \sbe \troa \quad\mbox{and}\quad\troa^*\troa ,\troa \troa^*\sbe B.
\end{equation}
Note that $S_B$ is an inverse semigroup with respect to the multiplication $\troa \cdot \trob \defeq \troa\trob =\cspn(\troa\trob)$. In fact,
first we have to show that the multiplication is well-defined. For this, take $\troa ,\trob \in S_B$. It is easy to see that $\troa\trob$
satisfies~\eqref{eq:InverseSemigroupOfTROs}. To see that $\troa\trob$ is again a {\tro},
observe that $I=\troa^*\troa $ and $J=\trob\trob^*$ are ideals in $B$, and
ideals of any $C^*$-algebra always commute as sets ($IJ=I\cap J=JI$). Hence
\begin{equation*}
(\troa\trob)(\troa\trob)^*(\troa\trob)=\troa (\trob \trob^*)(\troa^*\troa )\trob
        =\troa (\troa^*\troa )(\trob \trob^*)\trob =\troa\trob.
\end{equation*}
Thus $\troa\trob$ is a {\tro}. Now, because each $\troa \in S_B$ is a {\tro},
we have $\troa \troa^*\troa =\troa $ and $\troa^*=\troa^*\troa \troa^*$. Thus $\troa^*$ is an inverse of $\troa $ in $S_B$.
To show the uniqueness of inverses, it suffices to show that idempotents commute (see Theorem~3 in \cite[Chapter~1]{Lawson:InverseSemigroups}).
Let $\troa \in S_B$ be an idempotent, that is, $\troa^2=\troa \troa =\troa $.
We have $\troa^*=\troa^*\troa \troa^*=(\troa^*\troa )(\troa \troa^*)=(\troa \troa^*)(\troa^*\troa )=\troa \troa^*\troa^*\troa =\troa \troa^*\troa =\troa $. Thus, idempotents of $S_B$ have the form $\troa^*\troa $ with $\troa \in S_B$, and these commute because they are ideals of $B$. Hence $S_B$ is an inverse semigroup.

Observe that each $\troa =\A_s$ is a {\tro} in $\bound(\hils)$ that satisfies $\troa B,B\troa \sbe \troa $ and $\troa^*\troa ,\troa \troa^*\sbe B$
so that $\A_s\in S_B$. Consider
\begin{equation*}
T\defeq \{(s,\troa )\in S\times S_B\colon \troa \sbe \A_s\}.
\end{equation*}
Note that $T$ is an inverse sub-semigroup of $S\times S_B$ and we have a canonical
surjective homomorphism $\phi\colon T\to S$ given by the projection onto the first coordinate. Moreover, $\phi$ is essentially injective.
Indeed, suppose that $\phi(e,\troa )=e$ is idempotent in $S$. Since $(e,\troa )\in T$, we have $\troa \sbe \A_e$. Thus
\begin{equation*}
\A_e\troa^*\troa \sbe \A_e\A_e^*\troa =\A_e\troa \sbe \troa =\troa \troa^*\troa \sbe \A_e\troa^*\troa .
\end{equation*}
Therefore $\A_e\troa^*\troa =\troa $ and because $\troa^*\troa $ is an ideal of $\A_e$ we have $\A_e\troa^*\troa =\troa^*\troa $, that is, $\troa =\troa^*\troa $ is an idempotent of $S_B$.
We conclude that $(e,\troa )$ is an idempotent in $T$, whence $\phi$ is essentially injective.

Now we define a Fell bundle $\B=\{\B_t\}_{t\in T}$ over $T$ with fibers $\B_t\defeq \troa $ whenever $t=(s,\troa )$.
Notice that the order relation in $S_B$ is just the inclusion, that is, $\troa \leq \trob $ in $S_B$ if and only if $\troa \sbe \trob $. In fact,
if $\troa \leq \trob $, then $\troa =\trob \troa^*\troa \sbe \trob B\sbe \trob$. And if $\troa \sbe \trob $, then
\begin{equation*}
\trob \troa^*\troa \sbe \trob \trob^*\troa \sbe B\troa \sbe \troa =\troa \troa^*\troa \sbe \trob \troa^*\troa ,
\end{equation*}
so that $\trob \troa^*\troa =\troa $, that is, $\troa \leq \trob $. This allows us to define inclusion maps $j_{t',t}\colon\B_{t}\to \B_{t'}$ for $\B$ whenever $t\leq t'$ in $T$.
Of course, the algebraic operations of $\B$ are inherited from $\bound(\hils)$. With this structure, $\B$ is a saturated Fell bundle over $T$.
Moreover, by construction, it is a concrete Fell bundle in $\bound(\hils)$ and we have $\B_t\sbe \A_{\phi(t)}$ for all $t\in T$.
Thus, we get a canonical map $\psi\colon\B\to \A$ whose restriction to $\B_t$ is the inclusion $\B_t\into \A_{\phi(t)}$.
The pair $(\phi,\psi)$ is a morphism from $\B$ to $\A$ and through this morphism $\B$ is a refinement of $\A$. Therefore every Fell bundle
has a saturated refinement.

(b) Now we assume that $\A$ is locally regular. Then we redefine $S_B$ of part (a) taking only the \emph{regular} {\tro} $\troa\sbe\Ls(\hils)$ satisfying~\eqref{eq:InverseSemigroupOfTROs}.
If $\troa ,\trob $ are regular {\tro},
there are $u\in \troa $ and $v\in \trob $ with $u\sim \troa $ and $v\sim \trob $. We have
\begin{multline*}
uv(\troa\trob)^*(\troa\trob)=uv\trob^*\troa^*\troa\trob =u(\trob\trob^*)(\troa^*\troa )\trob\\
        =u(\troa^*\troa )(\trob \trob^*)\trob =\troa\trob .
\end{multline*}
Analogously, $(\troa\trob)(\troa\trob)^*uv=\troa\trob$, so that $uv\sim \troa\trob$ and therefore $\troa\trob$ is a regular {\tro} in $\bound(\hils)$.
It follows that $S_B$ is also an inverse semigroup. With the same definition for $T$, $\B$ and $(\phi,\psi)$ as above, we get the desired regular refinement of $\A$. In fact, by construction, $\B$ is a regular, saturated Fell bundle which is a refinement of $\A$ through the morphism $(\phi,\psi)$. The only non-trivial axiom to be checked is~\eqref{eq:ConditionRefinement}. But this follows from the definition of local regularity.

(c) If $\A$ has a regular refinement $\B=\{\B_t\}_{t\in T}$ via some morphism $(\phi,\psi)$, then each $\psi(\B_t)$ is a regular ideal
in $\A_{\phi(t)}$ (see Remark~\ref{rem:Refinement}(1)) and therefore $\A_{\phi(t)}$ is locally regular by~\eqref{eq:ConditionRefinement}. Since $\phi$ is surjective, this shows that $\A_s$ is locally regular for all $s\in S$, that is, $\A$ is locally regular.
\end{proof}

Observe that Theorem~\ref{theo:RefinementPreserveFullC*Algebras} and Proposition~\ref{prop:RefinementForLocRegularFellBundle}
enable us to apply our main results to every (not necessarily saturated)
locally regular Fell bundle $\A$ and describe their cross-sectional \cstar{}algebras $C^*(\A)$ and $C^*_\red(\A)$ as
(full or reduced) twisted crossed products.

\begin{remark}
Let $\A=\{\A_s\}_{s\in G}$ be a regular (not necessarily saturated) Fell bundle over a (discrete) group $G$.
By Theorem~7.3 in \cite{Exel:twisted.partial.actions}, this corresponds to a \emph{twisted partial action} $(\alpha,\upsilon)$ of $G$ on the unit fiber $A\defeq\A_1$. On the other hand, applying Proposition~\ref{prop:RefinementForLocRegularFellBundle}, and taking a saturated, regular refinement $\B=\{\B_t\}_{t\in T}$ of $\A$ (here $T$ is an inverse semigroup), we may describe the same system as a twisted action of $T$.
More precisely, by Corollary~\ref{cor:CorrespondenceRegularFellBundlesAndTwistedActions}, there is a twisted action $(\beta,\omega)$ of $T$ on $C^*(\E_\B)\cong A$ (by Theorem~\ref{theo:RefinementPreserveFullC*Algebras}) such that the crossed product
$A\rtimes_{\beta,\omega}^{(\red)}T\cong C^*_{(\red)}(\B)$ is (again by Theorem~\ref{theo:RefinementPreserveFullC*Algebras}) isomorphic to the crossed product $A\rtimes_{\alpha,\upsilon}^{(\red)}G\cong C^*_{(\red)}(\A)$.

By the way, it should be also possible to define \emph{twisted partial actions} of inverse semigroups, generalizing at the same time our
Definition~\ref{def:twisted action} of twisted action and that of \cite[Definition~2.1]{Exel:twisted.partial.actions} for groups. As in our main result (Corollary~\ref{cor:CorrespondenceRegularFellBundlesAndTwistedActions}), twisted partial actions should correspond to regular (not necessarily saturated) Fell bundles. However, the (full or reduced) cross-sectional \cstar{}algebra
of any such Fell bundle can also be described as a (full or reduced) crossed product by
some twisted action in our sense (again by Theorem~\ref{theo:RefinementPreserveFullC*Algebras} and Proposition~\ref{prop:RefinementForLocRegularFellBundle}). This is the reason why we have chosen not to consider twisted partial actions.
\end{remark}

Recall from \cite{BussExel:Fell.Bundle.and.Twisted.Groupoids} that a Fell bundle $\A=\{\A_s\}_{s\in S}$ is called \emph{semi-abelian} if the fibers $\A_e$ are commutative \cstar{}algebras for every idempotent $e\in S$.

\begin{corollary}\label{cor:RefinementForSemiAbelianFellBundle}
Every semi-abelian Fell bundle has a regular, saturated \textup(necessarily semi-abelian\textup) refinement.
\end{corollary}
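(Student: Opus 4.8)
The plan is to reduce the statement to results already proved, by first showing that every semi-abelian Fell bundle is automatically locally regular and then invoking Proposition~\ref{prop:RefinementForLocRegularFellBundle}(b). Throughout I may assume, without loss of generality, that $\A=\{\A_s\}_{s\in S}$ is concretely represented in some $\bound(\hils)$, so that each fiber $\A_s$ is a {\tro}.

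The first and only substantive step is the observation that, for every $s\in S$, the coefficient \cstar{}algebras $I_s=\A_s\A_s^*$ and $J_s=\A_s^*\A_s$ (closed linear spans) are commutative, \emph{even though $\A$ need not be saturated}. Indeed, the multiplication axiom of Definition~\ref{def:Fell bundles over ISG}, together with $\A_s^*\sbe\A_{s^*}$, gives the inclusions $\A_s\A_s^*\sbe\A_s\A_{s^*}\sbe\A_{ss^*}$ and $\A_s^*\A_s\sbe\A_{s^*}\A_s\sbe\A_{s^*s}$. Since $ss^*$ and $s^*s$ are idempotents and $\A$ is semi-abelian, the fibers $\A_{ss^*}$ and $\A_{s^*s}$ are commutative \cstar{}algebras; hence so are their \cstar{}subalgebras $I_s$ and $J_s$. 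By Proposition~\ref{prop:commutative=>loc.regular}, each $\A_s$ is therefore locally regular as a {\tro}, which is precisely the statement that $\A$ is a locally regular Fell bundle.

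With local regularity established, Proposition~\ref{prop:RefinementForLocRegularFellBundle}(b) produces a regular, saturated refinement $\B=\{\B_t\}_{t\in T}$ of $\A$ through some morphism $(\phi,\psi)$. It remains only to verify the parenthetical assertion that $\B$ is again semi-abelian. Let $f\in E(T)$ be idempotent; then $e\defeq\phi(f)\in E(S)$, and $\B_f$ is a \cstar{}algebra (being a fiber over an idempotent). Since $\psi_f$ is an injective \Star{}homomorphism, $\psi(\B_f)$ is a \cstar{}subalgebra of $\A_e$ (in fact an ideal, by Remark~\ref{rem:Refinement}(1)). As $e$ is idempotent and $\A$ is semi-abelian, $\A_e$ is commutative, whence $\B_f\cong\psi(\B_f)$ is commutative as well. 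Thus every idempotent fiber of $\B$ is commutative, so $\B$ is semi-abelian, completing the proof.

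I do not anticipate a genuine obstacle here: the result is essentially a bookkeeping combination of Propositions~\ref{prop:commutative=>loc.regular} and~\ref{prop:RefinementForLocRegularFellBundle}. The only point demanding a little care is the reduction in the second paragraph, namely that the coefficient algebras $I_s,J_s$ of a (possibly non-saturated) fiber remain commutative; this is exactly what the inclusions $I_s\sbe\A_{ss^*}$ and $J_s\sbe\A_{s^*s}$ into commutative idempotent fibers supply.
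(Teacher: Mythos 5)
Your proof is correct and follows essentially the same route as the paper's: invoke Proposition~\ref{prop:commutative=>loc.regular} to see that a semi-abelian Fell bundle is locally regular, then apply Proposition~\ref{prop:RefinementForLocRegularFellBundle}(b). Your extra details — checking that the coefficient algebras $I_s,J_s$ of a possibly non-saturated fiber sit inside the commutative idempotent fibers, and verifying the parenthetical claim that the refinement is again semi-abelian — are correct and merely make explicit what the paper leaves to the reader.
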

\begin{proof}
By Proposition~\ref{prop:commutative=>loc.regular}, every semi-abelian Fell bundle is locally regular.
Hence the assertion follows from Proposition~\ref{prop:RefinementForLocRegularFellBundle}
\end{proof}

In \cite{BussExel:Fell.Bundle.and.Twisted.Groupoids} we have shown that there is a close relationship between
saturated, semi-abelian Fell bundles and twisted étale groupoids, or equivalently, Fell line bundles over étale groupoids.
The following result shows that the twisted étale groupoids associated to semi-abelian Fell bundles are not changed under refinements:

\begin{proposition}\label{prop:RefinementPreserveTwistedGroupoids}
If $\B=\{\B_t\}_{t\in T}$ is a saturated refinement of a saturated, semi-abelian Fell bundle $\A=\{\A_s\}_{s\in S}$,
then the twisted étale groupoids associated to $\B$ and $\A$ as in \cite[Section~3.2]{BussExel:Fell.Bundle.and.Twisted.Groupoids} are isomorphic.
\end{proposition}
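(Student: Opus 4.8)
The plan is to produce the isomorphism of twisted groupoids directly from the refinement morphism $(\phi,\psi)$, using Theorem~\ref{theo:RefinementPreserveFullC*Algebras} to handle the base. Recall from \cite[Section~3.2]{BussExel:Fell.Bundle.and.Twisted.Groupoids} that, for a saturated semi-abelian Fell bundle, the unit space of the associated twisted groupoid is the spectrum of the commutative coefficient algebra $C^*(\E)$; each $s$ acts as a partial homeomorphism $\hat\beta_s$ between the open sets dual to the ideals $\D_{s^*s}$ and $\D_{ss^*}$; the groupoid $\G$ is the groupoid of germs of this action; and the twist $\Sigma$ is assembled from the one-dimensional line-bundle structure carried by the fibers over the commutative base. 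I first invoke the last assertion of Theorem~\ref{theo:RefinementPreserveFullC*Algebras}: the isomorphism $\Psi$ restricts to an isomorphism $C^*(\E_\B)\congto C^*(\E_\A)$, and dualizing by Gelfand duality gives a homeomorphism of the unit spaces, which I use to identify $X\defeq\G_\A^{(0)}=\G_\B^{(0)}$.

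Next I construct the groupoid isomorphism at the level of germs, sending $\germ{t}{x}$ in $\G_\B$ to $\germ{\phi(t)}{x}$ in $\G_\A$. This is well defined because $\phi$ preserves the natural order and sends idempotents to idempotents, so if $t,t'$ agree near $x$ (say $tf=t'f$ with $f\in E(T)$ and $x$ in the domain of $\hat\beta_f$), then their images agree near $x$; and it is a groupoid homomorphism because $\phi$ respects products and inverses while $\psi$ respects the algebraic operations, so the induced partial actions are intertwined. Surjectivity follows from the refinement condition~\eqref{eq:ConditionRefinement}: since $\A_s=\overline{\sum_{t\in\phi^{-1}(s)}\psi(\B_t)}$, the domains of the $\hat\beta_t$ with $t\in\phi^{-1}(s)$ cover the domain of $\hat\beta_s$, so every germ of $\G_\A$ is attained. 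Injectivity uses essential injectivity of $\phi$ together with its surjectivity: if $\phi(t)$ and $\phi(t')$ agree near $x$, one lifts a witnessing idempotent through $\phi$ (as in Case~2 of the proof of Theorem~\ref{theo:RefinementPreserveFullC*Algebras}) and applies essential injectivity to conclude that $t,t'$ already agree near $x$. This yields an isomorphism $\G_\B\cong\G_\A$ of étale groupoids over $X$.

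It remains to lift this to the twists. The fiberwise maps $\psi_t\colon\B_t\to\A_{\phi(t)}$ are injective and, by Remark~\ref{rem:Refinement}(1), have image a closed ideal of $\A_{\phi(t)}$; since $\psi$ preserves the involution and the bundle inner products, each $\psi_t$ is an isometric bimodule isomorphism onto its image. Localizing at a germ $\germ{t}{x}$, both line bundles have one-dimensional fibers, and $\psi_t$ supplies a unitary identification between them. I would then check that these fiberwise identifications are continuous and glue — using compatibility of $\psi$ with the inclusion maps, Definition~\ref{def:refinement}\eqref{def:refinement:inclusionmaps}, to patch over overlapping germs — into a homeomorphism $\Sigma_\B\to\Sigma_\A$ covering the groupoid isomorphism of the previous step and intertwining the two central extensions $\Torus\times X\to\Sigma\to\G$. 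Combined with the germ isomorphism, this gives $(\G_\B,\Sigma_\B)\cong(\G_\A,\Sigma_\A)$.

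The step I expect to be the main obstacle is this global gluing of the twist: the $\psi_t$ identify fibers one at a time, but assembling them into a continuous isomorphism of circle bundles — compatible with the topology of $\Sigma$ and with the transition maps $j^\B_{t',t}$ and $j^\A_{\phi(t'),\phi(t)}$ — is where local triviality of the line bundles and Definition~\ref{def:refinement}\eqref{def:refinement:inclusionmaps} genuinely enter. By contrast, the germ-level bijectivity of the second step, although it requires the bookkeeping of essential injectivity and~\eqref{eq:ConditionRefinement}, is routine.
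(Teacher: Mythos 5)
Your proposal is correct and follows essentially the same route as the paper: identify the unit spaces via the restriction of $\Psi$ to $C^*(\E_\B)\congto C^*(\E_\A)$, map germs by $\germ{t}{y}\mapsto\germ{\phi(t)}{y}$ (surjectivity from~\eqref{eq:ConditionRefinement}, injectivity from essential injectivity of $\phi$ after lifting a witnessing idempotent), and carry the twist along via $\psi$. The gluing difficulty you flag at the end largely dissolves in the paper's presentation, because the twist is realized as the Fell line bundle $L$ whose elements are equivalence classes $\qtrip{b}{t}{y}$, so the map $\qtrip{b}{t}{y}\mapsto\qtrip{\psi(b)}{\phi(t)}{y}$ is defined globally at once; one then only checks well-definedness, injectivity (reducing $a,a'$ to a common fiber by multiplying with a suitable $a_f\in\B_f$), and continuity via the local sections $\hat b$ together with \cite[Proposition~II.13.17]{fell_doran}.
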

\begin{proof}
During the proof we shall write $(\G,L)$ and $(\G',L')$ for the twisted groupoids associated to $\B$ and $\A$, respectively,
as constructed in \cite[Section~3.2]{BussExel:Fell.Bundle.and.Twisted.Groupoids}. Here $\G$ and $\G'$ are étale groupoids
and $L$ and $L'$ are Fell line bundles over $\G$ and $\G'$, respectively. Let $(\phi,\psi)$ be the morphism from
$\B$ to $\A$ that gives $\B$ as a refinement of $\A$ as in Definition~\ref{def:refinement}.
The map $\psi\colon\B\to \A$ preserves all the algebraic operations and inclusion maps of $\B$ and $\A$ and
restricts to an injective map $\B_t\into \A_{\phi(t)}$ for all $t\in T$. So, we may suppress $\psi$ and identify each $\B_t$
as a closed subspace of $\A_{\phi(t)}$. By Theorem~\ref{theo:RefinementPreserveFullC*Algebras},
these inclusions extend to the level of $C^*$-algebras yielding an isomorphism $C^*(\B)\cong C^*(\A)$
which restricts to an isomorphism of the underlying commutative $C^*$-algebras
$C^*(\E_\B)$ and $C^*(\E_\A)$. Let us say that $C^*(\E_\B)\cong C^*(\A)\cong \cont_0(X)$ for some locally compact space $X$.
Through these identifications, $\B_f\sbe\A_e\sbe\cont_0(X)$ becomes an inclusion of ideals whenever $e\in E(S)$, $f\in E(T)$ and $\phi(f)=e$.
Moreover,
\begin{equation}\label{eq:U_f is the union of U_f's}
\A_e=\overline{\sum\limits_{f\in \phi^{-1}(e)}\B_f}\quad \Longrightarrow\quad\U_e=\bigcup\limits_{f\in \phi^{-1}(e)}\U_f,
\end{equation}
where $\U_e$ is the open subset of $X$ that corresponds to the ideal $\A_e\sbe\cont_0(X)$, that is, $\A_e=\cont_0(\U_e)$.
By definition, the $\G$ and $\G'$ are groupoid of germs for certain canonical actions $\theta$ and $\theta'$ of $T$ and $S$ on $X$, respectively
(see \cite[Proposition~3.5]{BussExel:Fell.Bundle.and.Twisted.Groupoids} for the precise definition).
We shall use the same notation as in \cite{BussExel:Fell.Bundle.and.Twisted.Groupoids}
and write elements of $\G$ as equivalence classes $\germ tx$ of pairs $(t,x)$ where $t\in T$ and $x\in \dom(\theta_t)=\U_{t^*t}$,
and elements of $ L$ as equivalence classes $\qtrip btx$ of triples $(b,t,x)$ with $b\in \B_t$ and $x\in \dom(b)=\{y\in X\colon (b^*b)(y)>0\}$.
See \cite[Section~3.2]{BussExel:Fell.Bundle.and.Twisted.Groupoids} for further details.
Of course, we also use similar notations for elements of $\G'$ and $ L'$.
Now, consider the maps $\Phi\colon\G\to\G'$ and $\Psi\colon L\to  L'$ defined by
$\Phi\germ ty=\germ{\phi(t)}{y}$ and $\Psi\qtrip bty=\qtrip{\psi(b)}{\phi(t)}{y}$. We are going to show that these maps
give us the desired isomorphism $(\G, L)\cong (\G', L')$.
It is easy to see that the definition of $\Phi$ and $\Psi$ do not depend on representant choices for the equivalence classes,
that is, $\Phi$ and $\Psi$ are well-defined maps. Since $\phi$ is surjective, so is $\Phi$.
By \cite[Proposition~3.5]{BussExel:Fell.Bundle.and.Twisted.Groupoids}, $\theta_t\colon\U_{t^*t}\to \U_{tt^*}$ is the union of the partial homeomorphisms $\theta_b$ with $b\in \B_t$ defined in
\cite[Lemma~3.3]{BussExel:Fell.Bundle.and.Twisted.Groupoids}. Of course, the same holds for the partial homeomorphisms $\theta_s'\colon\U_{s^*s}\to \U_{ss^*}$, $s\in S$, associated to the Fell bundle $\A$. By~\eqref{eq:ConditionRefinement},
each $\theta_s'$ is the union of the partial homeomorphisms $\theta_t$ with $t\in \phi^{-1}(s)$.
This implies that $\Psi$ is surjective.
To show that $\Phi$ is injective, suppose that $\germ ty,\germ{t'}{y}\in \G$ and
$\germ{\phi(t)}{y}=\germ{\phi(t')}{y}$ in $\G'$, that is, there is $e\in E(S)$ such that $y\in \U_e$
and $\phi(t)e=\phi(t')e$. Equation~\eqref{eq:U_f is the union of U_f's} yields $f\in E(T)$ such that $y\in \U_f$ and $\phi(f)=e$.
Thus $\phi(t_1)=\phi(t_2)$, where $t_1=tf$ and $t_2=t'f$. Since $\phi$ is essentially injective, $g\defeq t_1^*t_2$ belongs to $E(T)$. Note that $t_1g=ht_2$, where $h\defeq t_1t_1^*\in E(T)$. Moreover,
\begin{equation*}
ht_2=ht_2t_2^*t_2=t_2t_2^*ht_2=t_2(t_2^*t_1)(t_1^*t_2)=t_2g^*g=t_2g.
\end{equation*}
Hence $t_1g=t_2g$ and since $y\in \U_{t_1^*t_1}\cap\U_{t_2^*t_2}$ and $g=t_1^*t_2$ is idempotent, it follows that $y\in \U_g$
(see proof of \cite[Lemma~3.4]{BussExel:Fell.Bundle.and.Twisted.Groupoids}). We conclude that $tfg=t'fg$ and $y\in \U_{f}\cap\U_g=\U_{fg}$, so that
$\germ ty=\germ{t'}{y}$. This shows that $\Phi$ is injective. We now prove that $\Psi$ injective. Assume that $\qtrip{\psi(b)}{\phi(t)}{y}=\qtrip{\psi(b')}{\phi(t')}{y}$
in $ L'$, that is, there is $c,c'\in\E_\A$ such that $c(y),c'(y)>0$ and $\psi(b)c=\psi(b')c'$. Equation~\eqref{eq:U_f is the union of U_f's}
implies the existence of $d,d'\in\E_\B$ with $c=\psi(d)$ and $c'=\psi(d')$. Since $\psi$ induces the isomorphism $C^*(\E_\B)\cong C^*(\E_\A)\cong \cont_0(X)$, we must have $d(y),d'(y)>0$. Let $a\defeq bd$ and $a'\defeq b'd'$. Then $a\in \B_s$, $a'\in \B_{s'}$ for some $s,s'\in T$,
and we have $\psi(a)=\psi(a')$. Note that $\qtrip{a}{s}{y}=\qtrip bty$ and $\qtrip{a'}{s'}{y}=\qtrip{b'}{t'}{y}$.
The injectivity of $\Psi$ will follow if we show that $\qtrip{a}{s}{y}=\qtrip{a'}{s'}{y}$.
By definition of refinement, $\psi$ is injective when restricted to the fibers of $\B$. The only small problem is that $a$ and $a'$
might not be in the same fiber. However we may circumvent this problem: suppose that $a\in \A_t$ and $a'\in \A_{t'}$.
Since $\psi(a)=\psi(a')$, $\psi(a)\in \B_{\phi(t)}$ and $\psi(a')\in \B_{\phi(t')}$, we must have $\phi(t)=\phi(t')$.
Hence $\Phi(\germ ty)=\Phi(\germ{t'}{y})$. The (already checked) injectivity of $\Phi$ yields $f\in E(T)$ with $y\in \U_f$ and $r\defeq tf=t'f$.
Now take any function $a_f\in \B_f=\cont_0(\U_f)$ with $a_f(y)>0$. Note that $\qtrip{a}{r}{y}=\qtrip{aa_f}{r}{y}$,
$\qtrip{a'}{r'}{y}=\qtrip{a'a_f}{r}{y}$ and $\psi(aa_f)=\psi(a'a_f)$. Since now both $aa_f,a'a_f\in \B_r$ the injectivity of $\psi_r\colon\B_r\to \A_{\phi(r)}$ implies that $aa_f=a'a_f$.
Therefore $\qtrip{a}{t}{y}=\qtrip{a'}{t'}{y}$, whence the injectivity of $\Psi$ follows.

It is easy to see that $\Phi\colon \G\to \G'$ and $\Psi\colon L\to L'$ preserve all the algebraic operations involved.
Moreover, $\Phi$ and $\Psi$ are homeomorphisms: given $t\in T$ and an open subset $U\sbe\U_{t^*t}$,
the basic neighborhood $\open(t,U)=\{\germ sy\colon y\in U\}$ in $\G$
(see Equation~(3.6) in \cite{BussExel:Fell.Bundle.and.Twisted.Groupoids}) is mapped by $\Phi$ to the
basic neighborhood $\open(s,U)$ in $\G'$, where $s=\phi(t)$ (note that $\U_{t^*t}\sbe\U_{s^*s}$).
Moreover, since any $\U_e$ with $e\in E(S)$ is the union of $\{\U_f\colon f\in \phi^{-1}(e)\}$,
the neighborhoods of the form $\open(s,U)$ with $s=\phi(t)$ and $U$ an open subset of $\U_{t^*t}$, generate the topology of $\G'$.
This implies that $\Phi$ is a homeomorphism. To prove that $\Psi$ is a homeomorphism, we apply \cite[Proposition~II.13.17]{fell_doran}.
For this it is enough to check that $\Psi$ is continuous and isometric on the fibers. But, since $L$ and $L'$ are both Fell bundles, the
injectivity of $\Psi$ (already checked above) implies that it is isometric. To see that $\Psi$ is continuous, let us recall from
\cite[Proposition~3.25]{BussExel:Fell.Bundle.and.Twisted.Groupoids} that the topology on $L$ is generated by the local sections
$\hat{b}\germ ty\defeq\qtrip bty$ for $b\in \B_t$, and similarly for $L'$. Now the continuity of
$\Psi$ follows from the equality $\Psi\circ \hat{b}=\widehat{\psi(b)}\circ\Phi$.

The bundle projections $\pi\colon L\onto \G$ and $\pi'\colon L'\onto \G'$
are defined by $\pi\qtrip bty= \germ ty$ and $\pi'\qtrip asx=\germ sx$.
From this, it is clear that the following diagram is commutative:
\begin{equation*}
\xymatrix{
L \ar[d]_{\Psi}\ar[r]^{\pi} &    \G  \ar[d]_{\Phi} \\
L' \ar[r]_{\pi'}              &    \G'
}
\end{equation*}
Therefore, the pair $(\Phi,\Psi)\colon (\G, L)\to(\G', L')$ is an isomorphism of Fell bundles.
\end{proof}

The semi-abelian Fell bundle associated to a twisted étale groupoid as in \cite{BussExel:Fell.Bundle.and.Twisted.Groupoids}
is automatically saturated. On the other hand, if $\A$ is a non-saturated, semi-abelian Fell bundle, we may apply
Corollary~\ref{cor:RefinementForSemiAbelianFellBundle} to find a saturated, semi-abelian refinement $\B$ of $\A$
and then apply the results of \cite{BussExel:Fell.Bundle.and.Twisted.Groupoids}
to find the associated twisted étale groupoid $(\G,\Sigma)$. By Theorem~\ref{theo:RefinementPreserveFullC*Algebras}
and Propositions~2.18 and~3.40 in \cite{BussExel:Fell.Bundle.and.Twisted.Groupoids}, we have isomorphisms
\begin{equation*}
C^*_{(\red)}(\A)\cong C^*_{(\red)}(\B)\cong C^*_{(\red)}(\G,\Sigma).
\end{equation*}
For the isomorphism between the full \cstar{}algebras above it is necessary to assume that
$\G$ is Hausdorff or second countable because this is part of the hypothesis in \cite[Propositions~2.18]{BussExel:Fell.Bundle.and.Twisted.Groupoids}.


\begin{bibdiv}
\begin{biblist}

\bib{BGR:MoritaEquivalence}{article}{
    AUTHOR = {Brown, Lawrence G.},
    AUTHOR = {Green, Philip},
    AUTHOR = {Rieffel, Marc A.},
     TITLE = {Stable isomorphism and strong {M}orita equivalence of
              {$C\sp*$}-algebras},
   JOURNAL = {Pacific J. Math.},
    VOLUME = {71},
      YEAR = {1977},
    NUMBER = {2},
     PAGES = {349--363},
}

\bib{Busby-Smith:Representations_twisted_group}{article}{
  author={Busby, Robert C.},
  author={Smith, Harvey A.},
  title={Representations of twisted group algebras},
  journal={Trans. Amer. Math. Soc.},
  volume={149},
  date={1970},
  pages={503--537},
}

\bib{BussExel:Fell.Bundle.and.Twisted.Groupoids}{article}{
  author={Buss, Alcides},
  author={Exel, Ruy},
  title={Fell bundles over inverse semigroups and twisted étale groupoids},
  journal={Preprint (to appear in Journal of Operator Theory)},
  volume={},
  number={},
  date={2009},
  pages={},
  issn={},
  review={},
  note={\arxiv{0903.3388}},
}

\bib{Deaconi_Kumjian_Ramazan:Fell.Bundles}{article}{
  author={Deaconu, Valentin},
  author={Kumjian, Alex},
  author={Ramazan, Birant},
  title={Fell bundles associated to groupoid morphisms},
  journal={Math. Scand.},
  volume={102},
  number={2},
  date={2008},
  pages={305--319},
  issn={0025-5521},
}

\bib{DokuchaevExelSimon:twisted.partial.actions}{article}{
    AUTHOR = {Dokuchaev, M.},
    AUTHOR = {Exel, R.},
    AUTHOR = {Sim{\'o}n, J. J.},
     TITLE = {Crossed products by twisted partial actions and graded
              algebras},
   JOURNAL = {J. Algebra},
    VOLUME = {320},
      YEAR = {2008},
    NUMBER = {8},
     PAGES = {3278--3310},
}

\bib{Echterhoff.et.al.Categorical.Imprimitivity}{article}{
    AUTHOR = {Echterhoff, Siegfried},
    AUTHOR = {Kaliszewski, Steven P.},
    AUTHOR = {Quigg, John},
    AUTHOR = {Raeburn, Iain},
     TITLE = {A categorical approach to imprimitivity theorems for \cstar{}dynamical systems},
   JOURNAL = {Mem. Amer. Math. Soc.},
    VOLUME = {180},
      YEAR = {2006},
    NUMBER = {850},
     PAGES = {viii+169},
}

\bib{Exel:twisted.partial.actions}{article}{
  author={Exel, Ruy},
  title={Twisted partial actions, a classification of regular $C^*$\nobreakdash-algebraic bundles},
  journal={Proc. London Math. Soc.},
  volume={74},
  number={},
  date={1997},
  pages={417-443},
  issn={},
  review={},
}

\bib{Exel:inverse.semigroups.comb.C-algebras}{article}{
   author={Exel, Ruy},
   title={Inverse semigroups and combinatorial $C^*$\nobreakdash-algebras},
   journal={Bull. Braz. Math. Soc. (N.S.)},
   volume={39},
   date={2008},
   number={2},
   pages={191--313},
   issn={1678-7544},
   review={\MRref{2419901}{}},
}

\bib{Exel:noncomm.cartan}{article}{
  author={Exel, Ruy},
  title={Noncommutative cartan sub-algebras of $C^*$\nobreakdash-algebras},
  journal={Preprint},
  volume={},
  number={},
  date={2008},
  pages={},
  issn={},
  review={},
}


\bib{fell_doran}{book}{
  author={Fell, James M.G.},
  author={Doran, Robert S.},
  title={Representations of \Star{}Algebras, Locally Compact Groups, and Banach \Star{}Algebraic Bundles Vol.1},
  volume={126},
  series={Pure and Applied Mathematics},
  publisher={Academic Press Inc.},
  date={1988},
  pages={xviii + 746},
  isbn={0-12-252721-6},
  review={\MRref{936628}{90c:46001}},
}

\bib{KhoshkamSkandalis:CrossedProducts}{article}{
    AUTHOR = {Khoshkam, Mahmood and Skandalis, Georges},
     TITLE = {Crossed products of {$C\sp *$}-algebras by groupoids and
              inverse semigroups},
   JOURNAL = {J. Operator Theory},
    VOLUME = {51},
      YEAR = {2004},
    NUMBER = {2},
     PAGES = {255--279},
}

\bib{Kumjian:cstar.diagonals}{article}{
  author={Kumjian, Alexander},
  title={On $C^*$\nobreakdash-diagonals},
  journal={Can. J. Math.},
  volume={XXXVIII},
  number={4},
  date={1986},
  pages={969-1008},
  issn={},
  review={},
}

\bib{Kumjian:fell.bundles.over.groupoids}{article}{
  author={Kumjian, Alexander},
  title={Fell bundles over groupoids},
  journal={Proc. Amer. Math. Soc.},
  volume={126},
  number={4},
  date={1998},
  pages={1115-1125},
  issn={},
  review={},
}

\bib{Lawson:InverseSemigroups}{book}{
  author={Mark V. Lawson},
  title={Inverse semigroups: the theory of partial symmetries},
  publisher={World Scientific Publishing Co.},
 place={River Edge, NJ},
  date={1998},
  pages={xiv+411},
  isbn={981-02-3316-7},
}

\bib{Muhly.Williams.Continuous.Trace.Groupoid}{article}{
    AUTHOR = {Muhly, Paul S.},
    AUTHOR = {Williams, Dana P.},
     TITLE = {Continuous trace groupoid \cstar{}algebras. {II}},
   JOURNAL = {Math. Scand.},
    VOLUME = {70},
      YEAR = {1992},
    NUMBER = {1},
     PAGES = {127--145},
      ISSN = {0025-5521},
}



\bib{Packer-Raeburn:Stabilisation}{article}{
  author={Packer, Judith A.},
  author={Raeburn, Iain},
  title={Twisted crossed products of $C^*$\nobreakdash-algebras},
  journal={Math. Proc. Cambridge Philos. Soc.},
  volume={106},
  date={1989},
  pages={293--311},
}

\bib{Paterson:Groupoids}{book}{
  author={Paterson, Alan L. T.},
  title={Groupoids, inverse semigroups, and their operator algebras},
  series={Progress in Mathematics},
  volume={170},
  publisher={Birkh\"auser Boston Inc.},
  place={Boston, MA},
  date={1999},
  pages={xvi+274},
}

\bib{Quigg.Sieben.C.star.actions.r.discrete.groupoids.and.inverse.semigroups}{article}{
  author={Quigg, John},
  author={Sieben, Nándor}
  title={$C^*$-actions of r-discrete groupoids and inverse semigroups},
  journal={J. Austral. Math. Soc.},
  volume={66},
  number={Series A},
  date={1999},
  pages={143-167},
}

\bib{Raeburn:PicardGroup}{article}{
    AUTHOR = {Raeburn, Iain},
     TITLE = {On the Picard group of a continuous trace \cstar{}algebra},
   JOURNAL = {Trans. Amer. Math. Soc.},
    VOLUME = {263},
      YEAR = {1981},
    NUMBER = {1},
     PAGES = {183--205},
      ISSN = {0002-9947},
}

\bib{RenaultThesis}{book}{
   AUTHOR = {Renault, Jean},
     TITLE = {A groupoid approach to {$C\sp{\ast} $}-algebras},
    SERIES = {Lecture Notes in Mathematics},
    VOLUME = {793},
 PUBLISHER = {Springer},
   ADDRESS = {Berlin},
      YEAR = {1980},
     PAGES = {ii+160},
      ISBN = {3-540-09977-8},
}

\bib{RenaultCartan}{article}{
    author = {Renault, Jean},
     title = {Cartan subalgebras in {$C\sp *$}-algebras},
   journal = {Irish Math. Soc. Bull.},
    volume = {61},
      date = {2008},
     pages = {29--63},
      issn = {0791-5578},
}

\bib{Rordam:StableExtensions}{article}{
    AUTHOR = {R{\o}rdam, Mikael},
     TITLE = {Extensions of stable {$C\sp *$}-algebras},
   JOURNAL = {Doc. Math.},
    VOLUME = {6},
      YEAR = {2001},
     PAGES = {241--246 (electronic)},
}

\bib{SiebenTwistedActions}{article}{
    AUTHOR = {Sieben, N{\'a}ndor},
     TITLE = {{$C\sp *$}-crossed products by twisted inverse semigroup actions},
   JOURNAL = {J. Operator Theory},
    VOLUME = {39},
      YEAR = {1998},
    NUMBER = {2},
     PAGES = {361--393},
}

\bib{SiebenFellBundles}{article}{
   author={Sieben, Nándor},
    title={Fell bundles over $r$\nb-discrete groupoids and inverse semigroups},
  journal={Unpublished preprint},
     date={1998},
     note={\href{http://jan.ucc.nau.edu/~ns46/bundle.ps.gz}{http://jan.ucc.nau.edu/\~{}ns46/bundle.ps.gz}},
}

\bib{Zettl:TROs}{article}{
    AUTHOR = {Zettl, Heinrich},
     TITLE = {A characterization of ternary rings of operators},
   JOURNAL = {Adv. in Math.},
    VOLUME = {48},
      YEAR = {1983},
    NUMBER = {2},
     PAGES = {117--143},
}

\end{biblist}
\end{bibdiv}

\end{document}